\newtheorem{thm}{Theorem}[section]
\newtheorem{cor}[thm]{Corollary}
\newtheorem{lem}[thm]{Lemma}
\newtheorem{prop}[thm]{Proposition}
\newtheorem{conj}{Conjecture}
\theoremstyle{definition}
\newtheorem{defn}[thm]{Definition}
\newtheorem{ques}[conj]{Question}
\theoremstyle{remark}
\newtheorem{rem}[thm]{Remark}
\newtheorem{exam}[thm]{Example}
\numberwithin{equation}{section}
\newcommand{\norm}[1]{\lVert #1 \rVert^2}
\newcommand{\spin}{\ifmmode{\rm Spin}\else{${\rm spin}$\ }\fi}
\newcommand{\spinc}{\ifmmode{{\rm Spin}^c}\else{${\rm spin}^c$}\fi}
\newcommand{\spinct}{\mathfrak t}
\newcommand{\spincs}{\mathfrak s}
\newcommand{\Char}{{\rm Char}}
\newcommand{\rk}{{\rm rk }}
\begin{document}

\title{Bounds on alternating surgery slopes}%
\author{Duncan McCoy}%
%\address{University of Texas at Austin}%
%\email{d.mccoy.1@research.gla.ac.uk}
\date{}%

\begin{abstract}
We show that if $p/q$-surgery on a nontrivial knot $K$ yields the branched double cover of an alternating knot, then $|p/q|\leq 4g(K)+3$. This generalises a bound for lens space surgeries first established by Rasmussen. We also show that all surgery coefficients yielding the double branched covers of alternating knots must be contained in an interval of width two and this full range can be realised only if the knot is a cable knot. The work of Greene and Gibbons shows that if $S^3_{p/q}(K)$ bounds a sharp 4-manifold $X$, then the intersection form of $X$ takes the form of a changemaker lattice. We extend this to show that the intersection form is determined uniquely by the knot $K$, the slope $p/q$ and the Betti number $b_2(X)$.
\end{abstract}
% ----------------------------------------------------------------

\maketitle

\section{Introduction}
For a knot $K\subset S^3$ and $p/q\in \mathbb{Q}$ we say that $S_{p/q}^3(K)$ is an {\em alternating surgery} if it is the double branched cover of an alternating knot or link. In this paper, we will prove some bounds on the slopes of alternating surgeries. The first of these generalises a bound for lens space surgeries originally due to Rasmussen \cite{Rasmussen04Goda}.
\begin{thm}\label{thm:upperbound}
If $K$ is a nontrivial knot with an alternating surgery $S_{p/q}^3(K)$, then the slope $p/q$ satisfies the inequality $|p/q| \leq 4g(K)+3$.
\end{thm}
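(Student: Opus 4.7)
The plan is to combine three ingredients: the existence of a sharp definite $4$-manifold (the Goeritz manifold) bounding the double branched cover of an alternating link; the Greene--Gibbons changemaker theorem recalled in the abstract; and the vanishing of the Ozsv\'ath--Szab\'o torsion coefficients, $t_i(K)=0$ for $i\geq g(K)$. After mirroring if necessary, I may assume $p/q>0$, and I write $S^3_{p/q}(K)=\Sigma(L)$ for an alternating link $L$. Then $S^3_{p/q}(K)$ bounds a sharp positive-definite $4$-manifold $W$, and Greene--Gibbons identifies the intersection form of $W$ with a changemaker lattice sitting inside some standard lattice $\mathbb{Z}^{n+1}$, distinguished by a ``changemaker vector'' $\sigma$.

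The next step is to extract combinatorial content from this embedding. The squared norm of $\sigma$ equals $p$ (with a straightforward adjustment when $q>1$), and its coordinates $\sigma_0\leq\sigma_1\leq\cdots\leq\sigma_n$ satisfy the defining condition that every integer in $[0,\sigma^2/2]$ is a subset sum of the $\sigma_i$. Matching the lattice-theoretic correction terms of $\partial W=S^3_{p/q}(K)$, computed as minima of squared norms of characteristic covectors, against the Ni--Wu / Ozsv\'ath--Szab\'o rational surgery formula -- which expresses $d(S^3_{p/q}(K),\mathfrak{s})$ in terms of lens-space correction terms and the integer invariants $V_i(K)$ -- constrains the entries of $\sigma$ via the torsion coefficients $t_i(K)$.

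The final step is a combinatorial bound: since $t_i(K)=0$ for $i\geq g(K)$, only a bounded number of coordinates of $\sigma$ may contribute nontrivially, and tallying the extremal configuration yields $\sigma^2\leq 4g(K)+3$ and hence $|p/q|\leq 4g(K)+3$. The main obstacle is precisely this last step: extracting the sharp constants $4$ and $3$ requires a tight analysis of the extremal changemaker configurations compatible with the torsion-coefficient vanishing -- these are expected to correspond to cabling patterns, consistently with the companion statement in the abstract that the full width-two range of alternating slopes is realised only for cable knots -- and uniformly handling rational surgeries $q>1$ alongside the integer case is a further technical subtlety, likely addressed by the rational refinement of the Greene--Gibbons framework announced in the abstract.
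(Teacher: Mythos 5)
Your outline correctly identifies the ingredients feeding into the proof: the Goeritz $4$-manifold is sharp (Theorem~\ref{thm:altboundssharp}), the Greene--Gibbons result (Theorem~\ref{thm:Gibbons}, packaged here as Theorem~\ref{thm:Gibbonssouped}) forces the negated intersection form to be a $p/q$-changemaker lattice, and matching against the Ni--Wu/$V_i$ data produces the identity $2g(K)=\sum_i\sigma_i(\sigma_i-1)$ (equation~\eqref{eq:calculateg}). But the final ``combinatorial bound'' step, as you sketched it, has a genuine gap: the changemaker condition together with vanishing of the torsion coefficients does \emph{not} imply $\norm{\sigma}\leq 4g(K)+3$. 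The changemaker condition says subset sums of the $\sigma_i$ realise every integer in $[0,\sum_i\sigma_i]$ (not $[0,\norm{\sigma}/2]$ as you wrote), and in particular it permits an arbitrary number of coordinates equal to $1$. For instance $\sigma=(1,\dotsc,1,2)$ with $t-1$ ones satisfies the changemaker condition, has $g(K)=1$ by \eqref{eq:calculateg}, yet $\norm{\sigma}=t+3$, which beats $4g(K)+3=7$ as soon as $t>4$. So ``tallying the extremal configuration'' cannot close the argument with only these inputs.

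The ingredient you are missing is that $\Lambda_D$ is not merely a changemaker lattice: it is the white lattice of a reduced alternating diagram, hence a graph lattice, hence it admits an obtuse superbase (Section~\ref{sec:graphlattices}). This extra structure is what caps the number of coordinates equal to $1$. The paper's Lemma~\ref{lem:graphbound} shows that for a changemaker lattice admitting an obtuse superbase with nonempty stable part, the minimal index $m$ with $\sigma_m>1$ satisfies $\sigma_m\geq m-2$, equivalently $\norm{w_0}\leq 1+\sigma_m+\sum_{i\geq m}\sigma_i^2$. Combining this with $\sigma_i^2\leq 2\sigma_i(\sigma_i-1)$ for $\sigma_i\geq 2$ gives Lemma~\ref{lem:gbound}, i.e.\ $\sum_i\sigma_i^2\leq 2\sum_i\sigma_i(\sigma_i-1)+3=4g(K)+3$. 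Likewise, reducing from rational $p/q$ to the integer $\lceil p/q\rceil$ is handled by Proposition~\ref{prop:nearby}, which again is a statement about obtuse superbases, not just about changemaker vectors. In short: the proof is not an extremal analysis of changemaker vectors under torsion vanishing; it hinges on the graph-lattice/obtuse-superbase geometry coming from the alternating diagram, and that is the step your proposal omits.
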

The bound in Theorem~\ref{thm:upperbound} is sharp with equality being attained by the $T_{2,n}$ torus knots. It turns out that whenever this bound is realised the resulting alternating surgery yields a lens space. Hence, work of Baker shows that the $T_{2,n}$ torus knots are the only knots achieving achieving equality in Theorem~\ref{thm:upperbound} \cite[Theorem~1.2]{baker06smallgenus}.

We can also obtain a bound on the range of slopes yielding alternating surgeries.
\begin{thm}\label{thm:widthbound}
If $K$ is a nontrivial knot admitting an alternating surgery, then there is an integer $N$, such that for any alternating surgery $S_{p/q}^3(K)$, the coefficient $p/q$ lies in the interval \[N-1\leq p/q \leq N+1.\]
\end{thm}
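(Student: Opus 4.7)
The plan is to combine the Greene--Gibbons changemaker-lattice description of sharp 4-manifold fillings with the Ozsv\'ath--Szab\'o mapping cone formula to extract a rigidity statement on the possible alternating surgery slopes. Theorem~\ref{thm:upperbound} implies that the set $\mathcal{A}(K)$ of slopes $p/q$ for which $S^3_{p/q}(K)$ is alternating is finite, with extremal values $\alpha_{-}\le\alpha_{+}$; it therefore suffices to show $\alpha_{+}-\alpha_{-}\le 2$, after which any integer $N$ with $[\alpha_{-},\alpha_{+}]\subseteq[N-1,N+1]$ witnesses the conclusion.

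For each slope $\alpha=p/q\in\mathcal{A}(K)$, the branched double cover of an alternating link bounds a sharp negative definite 4-manifold built from its Goeritz form, so $S^3_\alpha(K)$ bounds such a sharp manifold and its intersection form is, by Greene--Gibbons, a changemaker lattice determined by a vector $\sigma_\alpha$. The correction terms of $S^3_\alpha(K)$ can now be computed in two ways: combinatorially from $\sigma_\alpha$ as minima over characteristic covectors in the diagonal lattice, and via the mapping cone formula from the sequence $\{V_i(K)\}_{i\ge 0}$. Since the $V_i$-sequence depends only on $K$, equating the two expressions forces $\sigma_\alpha$ to encode the same underlying sequence for every $\alpha\in\mathcal{A}(K)$. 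This shared-$V_i$ compatibility is the backbone of the argument and is essentially the content of the uniqueness statement highlighted in the abstract: the intersection form, and hence $\sigma_\alpha$ itself, is determined by $K$, $\alpha$ and the rank $b_2(X)$.

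The main task is then to translate this compatibility into a quantitative slope bound. I expect the extremal changemaker vectors $\sigma_{\alpha_{-}}$ and $\sigma_{\alpha_{+}}$ to be tightly related: for fixed $K$, passing from one compatible changemaker vector to another should amount to appending or altering a single coordinate of small size, which changes the slope by at most $2$, while any larger slope change would require a coordinate of $\sigma_\alpha$ inconsistent with the fixed $V_i$-sequence. Carrying this out, and controlling how the continued-fraction structure of $p/q$ interacts with the changemaker vector for non-integral slopes, is where the main technical work lies. This combinatorial rigidity step is the principal obstacle; a successful analysis should simultaneously pin down the extremal case $\alpha_{+}-\alpha_{-}=2$ as forcing $K$ to be a cable knot, matching the refined claim in the introduction.
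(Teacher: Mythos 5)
Your proposal correctly identifies the starting point — by Theorem~\ref{thm:Gibbonssouped} and Theorem~\ref{thm:altboundssharp}, the Goeritz lattice of any alternating surgery on $K$ is a $p/q$-changemaker lattice whose stable coefficients are an invariant of $K$ — but it leaves the actual quantitative bound unproved, and the mechanism you gesture at for obtaining it is not the right one. You write that ``equating the two expressions forces $\sigma_\alpha$ to encode the same underlying sequence for every $\alpha$,'' and hope that this compatibility alone forces $\alpha_+ - \alpha_- \leq 2$. That cannot work. The $V_i$-compatibility fixes only the stable coefficients $(\sigma_m,\dotsc,\sigma_t)$; it places no upper bound on how many coordinates equal to $1$ the changemaker vector may have, and each additional $1$ increases $\lVert w_0\rVert^2 = \lceil p/q\rceil$ by one. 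Indeed, Remark~\ref{rem:sharplowerbound} together with Theorem~\ref{thm:sharpupwards} shows that sharp negative-definite fillings exist for \emph{all} sufficiently large slopes, so the sharp/changemaker constraint alone only yields the lower bound $p/q\geq N-1$ (which follows from the changemaker condition $\sigma_m\leq 1+\sum_{i<m}\sigma_i$). The upper bound $p/q\leq N+1$ is special to alternating surgeries and is where the real content lies.

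The missing idea is lattice-theoretic rather than Floer-theoretic: because the Goeritz lattice of a reduced alternating diagram is a graph lattice, it admits an obtuse superbase. Lemma~\ref{lem:graphbound} exploits this to show that if a changemaker lattice $\langle \sigma_1 f_1+\dotsb+\sigma_t f_t\rangle^\perp$ with all $\sigma_i\geq 1$ admits an obtuse superbase, then $\sigma_m\geq m-2$ (where $m$ is the first index with $\sigma_m>1$); this caps the number of leading $1$'s and hence gives $\lVert w_0\rVert^2 \leq 1+\sigma_m+\sum_{i=m}^t\sigma_i^2 = N+1$. One also needs Proposition~\ref{prop:nearby} to reduce the non-integral case to the integral one while preserving the obtuse superbase. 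None of the graph-lattice/obtuse-superbase machinery (Section~\ref{sec:graphlattices}, in particular Lemma~\ref{lem:superbasemod}, Lemma~\ref{lem:irreducible}, Lemma~\ref{lem:2connectgraphlat}) appears in your outline, and without it the ``rigidity step'' you flag as the principal obstacle will not close.
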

The definition of $N$ is given Section~\ref{subsec:mainresults}. Theorem~\ref{thm:widthbound} shows that the range of slopes which yield alternating surgeries is contained in an interval with integer endpoints of width two. When every slope in this interval yields an alternating surgery, then we will show that the knot must be a cable knot. For the purposes of this paper, we consider torus knots to be cable knots.
\begin{thm}\label{thm:fullrange}
Suppose that $K$ is a nontrivial knot admitting alternating surgeries $S_{r}^3(K)$ for each of the slopes $r\in \{r_1,r_2,N\}$, where $N$ is the integer appearing in Theorem~\ref{thm:widthbound}. If $r_1$ and $r_2$ satisfy
\[N-1\leq r_1<N<r_2< N+1,\]
then $S_{N}^3(K)$ is a reducible surgery and $K$ is a cable knot.
\end{thm}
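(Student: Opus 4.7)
The plan is to first force $S^3_N(K)$ to be reducible using the rigidity of changemaker lattices at the three given slopes, and then to conclude that $K$ is a cable knot by invoking a cabling result for L-space knots.

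Double branched covers of alternating links are L-spaces that bound canonical sharp negative-definite 4-manifolds (Ozsv\'ath-Szab\'o and Greene), so each of the three surgeries $S^3_{r_1}(K)$, $S^3_N(K)$, $S^3_{r_2}(K)$ bounds a sharp 4-manifold whose intersection form is a changemaker lattice (Greene, Gibbons). By the rigidity theorem announced in the abstract, each such lattice is determined by $K$, the slope, and the Betti number. The first step is to record the structural form of these three changemaker lattices.

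The heart of the argument is a comparison step. Writing $r_1 = (N a_1 - 1)/a_1$ and $r_2 = (N a_2 + 1)/a_2$ in the simplest case (and handling general fractions in a parallel way), the changemaker vectors realizing $r_1$ and $r_2$ must be closely related to the integer-slope changemaker at $N$. My expectation is that the simultaneous existence of one-sided approximations to $N$ with small denominators on both sides forces the changemaker lattice at slope $N$ to admit a nontrivial orthogonal splitting. Through the correspondence between lattice splittings and handle decompositions of the bounding sharp 4-manifold, such a splitting produces a connect-sum decomposition of the 3-dimensional boundary, which shows that $S^3_N(K)$ is reducible.

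With reducibility of $S^3_N(K)$ established, the second conclusion follows by invoking the cabling conjecture in the L-space knot setting: since alternating surgeries are L-space surgeries, $K$ is an L-space knot, and an L-space knot with a reducible surgery must be a cable knot with the reducible slope equal to the cabling slope $pq$.

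The main obstacle I anticipate is the lattice-theoretic comparison step: simultaneously controlling the three changemaker sequences and extracting the required orthogonal splitting at slope $N$ will likely require a careful case analysis on the denominators of $r_1$ and $r_2$, with the edge case $r_1 = N - 1$ treated separately. Translating the algebraic splitting faithfully into a topological reducing sphere in $S^3_N(K)$ is the other delicate point, but it should be handled by tracking the plumbing description of the sharp 4-manifold alongside the changemaker data.
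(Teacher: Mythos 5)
Your proposal captures the correct high-level strategy — force a decomposition of the changemaker lattice at slope $N$, deduce reducibility of $S^3_N(K)$, and then conclude cabling — but there are two genuine gaps where the proposal's mechanisms would not work, and the paper uses quite different tools.

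First, the step from a decomposable intersection form to a reducing sphere in $S^3_N(K)$. A splitting of the intersection form of a 4-manifold bounding $Y$ does not in general force $Y$ to be reducible (e.g. $S^3$ bounds $\#_k\,\overline{\mathbb{CP}}^2$, which has a decomposable form, for every $k$). The paper avoids this trap by never leaving the white-graph picture: writing $S^3_N(K)=\Sigma(L)$ with $D$ a reduced alternating diagram for $L$, Theorem~\ref{thm:altboundssharp} identifies $\Lambda_N$ with the white lattice $\Lambda_D$, which is a graph lattice $\Lambda(\Gamma_D)$. Lemma~\ref{lem:2connectgraphlat} then converts the existence of a reducible vertex in $\Lambda_N$ into a cut vertex of $\Gamma_D$, and since $D$ is reduced this yields $L=L_1\#L_2$ nontrivially and hence $\Sigma(L)$ reducible. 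This chain of reasoning is genuinely combinatorial/diagrammatic and is not available from a 4-manifold handle decomposition alone. You should also note that obtaining the reducible vertex $v$ in $\Lambda_N$ is where the two side slopes $r_1$ and $r_2$ get used in different ways: the existence of the $r_2$-surgery feeds Proposition~\ref{prop:nearby} to produce an obtuse superbase of $\Lambda_N$ with a vertex $v$ satisfying $v\cdot f_1=-2$, while the existence of the $r_1$-surgery gives the changemaker condition on $(\sigma_2,\dots,\sigma_t)$, which (via Proposition~\ref{prop:CMprop}) lets one build an explicit $z\in\Lambda_N$ with $(v-z)\cdot z\geq 0$. That computation is the heart of the lattice comparison you anticipated but left unspecified.

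Second, the deduction that $K$ is a cable knot. You invoke ``the cabling conjecture in the L-space knot setting,'' but the cabling conjecture is not known for L-space knots; relying on it would leave this step unproved. The paper instead applies the unconditional Hoffman/Matignon--Sayari theorem: a reducible $N$-surgery forces either $K$ to be a cable knot or $N\leq 2g(K)-1$. The latter is ruled out because $N = \sigma_m + \sum_{i=m}^t\sigma_i^2 > \sum_i\sigma_i(\sigma_i-1) = 2g(K)$ by \eqref{eq:calculateg}, so $K$ must be a cable knot. This genus inequality, which you do not mention, is what closes the argument without any appeal to the cabling conjecture.
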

\begin{rem}\label{rem:fullrangeextension}
It can be shown that Theorem~\ref{thm:fullrange} still holds under the slightly weaker condition that $r_2\leq N+1$. However, this relatively minor extension requires a substantial amount of work so we will not prove it here.
\end{rem}

The starting point for the proof of these results is the work of Gibbons \cite{gibbons2013deficiency}, which generalizes the work of Greene \cite{GreeneLRP,Greene3Braid,greene2010space}. It provides strong restrictions on the intersection form of a negative-definite sharp $4$-manifold $X$ bounding $S^3_{p/q}(K)$ for $p/q>0$, which must take the form of a changemaker lattice. In order to prove Theorem~\ref{thm:widthbound} and Theorem~\ref{thm:fullrange} we are required to determine the extent to which this intersection form depends on the knot $K$ and the surgery slope $p/q$. This leads us to define the stable coefficients of a changemaker lattice. The definition of a changemaker lattice and its stable coefficients are given in Section~\ref{sec:cmlattices}. Let $p/q$ have continued fraction expansion $p/q=[a_0, \dots, a_l]^-$, where $a_i\geq 2$ for $1\leq i \leq l$ and $a_0\geq 1$. Here $[a_0, \dotsc, a_l]^-$ denotes the Hirzebruch-Jung continued fraction:
\[[a_0, \dotsc, a_l]^-
    = a_0 - \cfrac{1}{a_1
           - \cfrac{1}{\ddots
           - \cfrac{1}{a_l} } }.\]
A $p/q$-changemaker lattice takes the form of an  orthogonal complement:
\[L=\langle w_0, \dotsc , w_l \rangle^\bot \subseteq \mathbb{Z}^{t+s+1}=\langle f_1, \dotsc , f_t, e_0, \dotsc , e_s \rangle,\]
where the $f_i$ and $e_j$ form an orthonormal basis for $\mathbb{Z}^{r+s+1}$, and the $w_i$ have the properties that
\[w_i \cdot w_j =
    \begin{cases}
        a_i   & i=j\\
        -1   & |i-j|=1 \\
        0   &  |i-j|\geq 2,
    \end{cases}
\]
and
\[w_0\cdot e_0=1, \quad w_0\cdot e_i = 0 \quad \text{for } 1\leq i \leq s,\]
\[w_0\cdot f_i \geq 0 \quad \text{for } 1\leq i \leq t,\]
\[w_j\cdot f_i = 0 \quad \text{for } 1\leq i \leq t\text{ and } 1\leq j \leq l.\]
The stable coefficients of $L$ are defined to be the values of $w_0\cdot f_i$ satisfying $w_0\cdot f_i>1$.
\begin{thm}\label{thm:Gibbonssouped}
Let $K \subset S^3$ be a knot and suppose that for some $p/q>0$, $S^3_{p/q}(K)$ bounds a negative-definite sharp 4-manifold $X$ with intersection form $Q_X$. Then the positive-definite lattice $-Q_X$ embeds into $\mathbb{Z}^{b_2(X)+l+1}$ as a $p/q$-changemaker lattice, where the stable coefficients are determined by $K$.
\end{thm}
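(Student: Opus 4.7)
The plan is to combine Gibbons' embedding result (which already supplies the changemaker structure) with correction-term calculations on both sides of the relation $\partial X = S^3_{p/q}(K)$, in order to isolate which changemaker coefficients are forced to depend only on $K$.

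First I would invoke Gibbons' theorem to obtain a $p/q$-changemaker embedding
\[-Q_X \hookrightarrow \mathbb{Z}^{b_2(X)+l+1}, \qquad L=\langle w_0,\dotsc,w_l \rangle^\bot,\]
so that the remaining content of Theorem~\ref{thm:Gibbonssouped} is to show that the multiset $\{w_0\cdot f_i : w_0\cdot f_i>1\}$ is determined by $K$ alone, independently of the auxiliary choices of $X$ and of $p/q$.

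The link between the lattice and the knot is made through the Heegaard Floer correction terms of $Y=S^3_{p/q}(K)$. Since $X$ is sharp and negative-definite, for every \spinc structure $\mathfrak t$ on $Y$ we have
\[d(Y,\mathfrak t)=\max_{\xi}\frac{\xi^2+\rk L}{4},\]
where $\xi$ ranges over characteristic covectors of $L$ in the appropriate equivalence class. Using the explicit orthogonal-complement description of $L$ inside $\mathbb{Z}^{t+s+1}$, this maximum becomes a combinatorial extremal problem whose answer is an explicit expression in the changemaker coefficients $w_0\cdot f_i$ (together with the $a_i$ encoding $p/q$). In parallel, the Ozsv\'ath--Szab\'o mapping cone formula expresses the same correction terms of $S^3_{p/q}(K)$ purely in terms of $p/q$ and the $V_s(K)$ invariants of the knot Floer complex of $K$. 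Equating the two expressions across all \spinc structures of $Y$ produces a system of identities between symmetric combinations of the $w_0\cdot f_i$ and the $V_s(K)$.

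The decisive step, and the one I expect to be the main obstacle, is to show that when this system is restricted to the coefficients satisfying $w_0\cdot f_i>1$, the resulting relations decouple from $p/q$ and can be inverted to recover each stable coefficient solely from the $V_s(K)$. The non-stable coefficients (those equal to $0$ or $1$) are what absorb all the slope-dependent information, and disentangling their contribution from that of the stable ones requires a careful analysis of which characteristic covectors actually realise the extrema in the sharpness formula; this is where the condition $w_0\cdot f_i>1$ must be used in an essential way. Once the decoupling is in place, each stable coefficient is exhibited as an explicit function of the $V_s(K)$, which are invariants of $K$, and the theorem follows.
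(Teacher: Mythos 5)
Your high-level strategy is the same one the paper uses: invoke Gibbons' theorem to get the changemaker embedding, then use the fact that the $V_i$ invariants of $K$ are pinned down by an extremal problem over characteristic (co)vectors involving the changemaker vector $w_0$, and argue that this can be inverted to recover the stable coefficients from the $V_i$ alone. Indeed, the relation you propose to derive by matching the sharpness formula against the mapping-cone computation of $d$-invariants is precisely the equation the paper records as \eqref{eq:CMformula},
\[
8V_{|i|} \;=\; \min_{\substack{ c\cdot w_0 \equiv a_0 + 2i \bmod 2a_0 \\ c \in \Char(\mathbb{Z}^{t+1})}} \norm{c} - t -1,
\]
for $|i|\le a_0/2$, which the paper takes as given (citing Gibbons and Greene) rather than re-deriving.

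The gap is exactly the step you yourself flag as ``the main obstacle'': you do not show that this family of relations can be inverted to isolate the coefficients $\sigma_i=w_0\cdot f_i>1$ independently of $t$ and $\lceil p/q\rceil$. That inversion is the entire content of Section~\ref{sec:CMinvariant}. Briefly, the paper first removes the congruence by showing the minimum may always be taken with $c\cdot w_0 = 2k-n$ (Lemma~\ref{lem:modremoved}), introduces the quantities $T_m=|\{i:0<V_i\le m\}|$ and shows they equal $\max_{\alpha\in S_m}\rho\cdot\alpha$ for $m<V_0$, with $T_{V_0}=\tilde g=\frac12\sum\rho_i(\rho_i-1)$ (Lemma~\ref{lem:calcTi}), and then runs a greedy reconstruction $s^{(0)},s^{(1)},\dotsc$ of $\rho$ from the $T_m$ (Lemma~\ref{lem:mu<3}). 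The catch, and the reason this requires several pages, is that the greedy step is only valid when the increments $\mu=\min(T_i-T_{i-1})$ are small: separate lemmas (\ref{lem:V0=1}, \ref{lem:mubound}, \ref{lem:mu=3}) classify the exceptional configurations with $V_0\le1$ or $\mu>2$ and check the conclusion by hand there. None of this case analysis is visible in your sketch; asserting that the system ``decouples and can be inverted'' is where the actual proof lives, and without the $T_m$ machinery it is not clear how you would establish uniqueness of the recovered coefficients or handle the degenerate cases. So the outline is right but the proof is not yet a proof.
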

The stable coefficients in Theorem~\ref{thm:Gibbonssouped} form an invariant of the knot $K$ that can be calculated from the knot Floer homology of $K$. Section~\ref{sec:CMinvariant} provides an algorithm for this calculation. When $K$ is an $L$-space knot, the stable coefficients can be computed directly from its Alexander polynomial. The integer $N$ appearing in Theorem~\ref{thm:widthbound} and Theorem~\ref{thm:fullrange} is defined in terms of stable coefficients and hence is an invariant of $K$ and can be calculated from the Alexander polynomial.
\begin{rem}\label{rem:sharplowerbound}
In addition to being a lower bound for alternating surgeries, the integer $N-1$ appearing in Theorem~\ref{thm:widthbound} also has the property that if $S^3_{p/q}(K)$ bounds a negative-definite sharp 4-manifold then $p/q\geq N-1$. We explain this observation after the proof of Theorem~\ref{thm:widthbound}.
\end{rem}
Given one negative-definite sharp 4-manifold, bounding a 3-manifold $Y$ we can obtain another by taking a connected sum with $\overline{\mathbb{CP}}^2$. It follows from Theorem~\ref{thm:Gibbonssouped} that if $Y=S^3_{p/q}(K)$, then at the level of intersection forms this is the only possibility.
\begin{cor}\label{cor:interfromunique}
Let $K \subset S^3$ be a knot such that for some $p/q>0$, the 3-manifold $S^3_{p/q}(K)$ bounds negative-definite sharp 4-manifolds $X$ and $X'$, with $b_2(X')= b_2(X)+k$ for $k\geq 0$. Then
\[ Q_{X'} \cong Q_{X} \oplus (-\mathbb{Z}^{k})\cong Q_{X\#_{k}\overline{\mathbb{CP}}^2}.\]
\end{cor}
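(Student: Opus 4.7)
The plan is to derive Corollary~\ref{cor:interfromunique} directly from Theorem~\ref{thm:Gibbonssouped} by pinning down exactly what data in a $p/q$-changemaker lattice is fixed by $K$, $p/q$, and the Betti number. Applying the theorem to both $X$ and $X'$ yields $p/q$-changemaker embeddings
\[-Q_X = L \subseteq \mathbb{Z}^{t+s+1}, \qquad -Q_{X'} = L' \subseteq \mathbb{Z}^{t'+s+1},\]
with $t' - t = k$, and with the multiset of stable coefficients of $L$ and of $L'$ agreeing, since these are invariants of $K$.

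The vectors $w_1, \dotsc, w_l$ all lie in $\langle e_0, \dotsc, e_s \rangle$ (from the condition $w_j \cdot f_i = 0$ for $j \geq 1$), and their Gram matrix, given by the $a_i$, is determined by the continued fraction of $p/q$. Hence, after an isomorphism of ambient lattices, we may identify the $e$-parts of the two realizations together with the vectors $w_1, \dotsc, w_l$. The $e$-component of $w_0$ is likewise pinned down by $p/q$: writing $w_0 = \sum_i c_i f_i + d\, e_0$, the relations $w_0 \cdot e_i = 0$ for $i \geq 1$ together with $w_0 \cdot w_1 = -1$ force $d(w_1 \cdot e_0) = -1$, so $d$ is determined up to sign. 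The upshot is that $L$ and $L'$ can differ only in the $f$-coefficients $c_i = w_0 \cdot f_i$.

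These $c_i$ are non-negative integers; the stable ones (those $> 1$) agree in $L$ and $L'$, and the remaining coefficients lie in $\{0, 1\}$. Since $|w_0|^2 = a_0$ and $|d|$ are both fixed, the total $\sum_i c_i^2$ is the same in both lattices; hence the number of $c_i$ equal to $1$ must match, and $L'$ has exactly $k$ more zero coefficients than $L$. Each basis vector $f_i$ with $c_i = 0$ is orthogonal to every $w_j$, hence lies in $L'$ and splits off as a rank-$1$ summand. Peeling off these $k$ summands yields $L' \cong L \oplus \mathbb{Z}^k$, and negating gives
\[Q_{X'} \cong Q_X \oplus (-\mathbb{Z}^k) \cong Q_{X \#_k \overline{\mathbb{CP}}^2},\]
as required. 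The main obstacle is the step of identifying the $e$-parts of the two changemaker realizations and confirming the invariance of the scalar $d$; this is the one point where one has to appeal beyond the excerpt to the precise definitions and normalization conventions given in Section~\ref{sec:cmlattices}.
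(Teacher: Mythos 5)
Your argument is correct and follows the same route as the paper: both reduce the corollary to the observation that a $p/q$-changemaker lattice is determined up to isomorphism by $p/q$, its stable coefficients, and its rank, and that increasing the rank simply adds $f_i$'s with coefficient zero, each splitting off as a $\mathbb{Z}$ summand. The paper records precisely this observation as Remark~\ref{rem:CMdetermined} and then cites it in the proof of the corollary, whereas you re-derive it from scratch. The one point you flag as a potential obstacle — pinning down the $e$-part of $w_0$ and the vectors $w_1,\dots,w_l$ — is in fact immediate from Definition~\ref{def:nonintCMlattice}: the coefficient of $e_0$ in $w_0$ is $1$ by definition, and $w_1,\dots,w_l$ are the same explicit vectors in every $p/q$-changemaker lattice (not merely vectors with the same Gram matrix), so no additional identification step is needed. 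With that small simplification your bookkeeping — stable coefficients agree, $\norm{w_0}=\lceil p/q\rceil$ fixes the number of unit coefficients, and the remaining $k$ coefficients must be zero and give orthogonal $\mathbb{Z}$ summands — matches the paper's reasoning exactly.
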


\subsection*{Acknowledgements}
The author would like to thank his supervisor, Brendan Owens, for his guidance and careful reading of this paper. He is grateful to Liam Watson for helpful conversations about quasi-alternating links and many other things. He also wishes to thank the anonymous referee for their feedback.

\section{Changemaker lattices and sharp 4-manifolds}
The aim of this section is to prove Theorem~\ref{thm:Gibbonssouped}. We begin by defining changemaker lattices and recalling the necessary definitions and properties from Heegaard Floer homology. We finish the section by stating the properties of $L$-space surgeries that we will require to prove the results on alternating surgeries.

\subsection{Changemaker lattices}\label{sec:cmlattices}
We will define $p/q$-changemaker lattices for any $p/q>0$. Changemaker lattices corresponding to the case $q=1$ were defined by Greene in his solution to the lens space realization problem \cite{GreeneLRP} and work on the cabling conjecture \cite{greene2010space}. The case $q=2$ arose in his work on unknotting numbers \cite{Greene3Braid}. The more general definition we state here is the one which arises in Gibbons' work \cite{gibbons2013deficiency}.

\begin{defn}We say $(\sigma_1, \dots , \sigma_t)$ satisfies the {\em changemaker condition}, if the following conditions hold,
$$0\leq \sigma_1 \leq 1, \text{ and } \sigma_{i-1} \leq \sigma_i \leq \sigma_1 + \dots + \sigma_{i-1} +1,\text{ for } 1<i\leq t.$$
\end{defn}

The changemaker condition is equivalent to the following combinatorial result.
\begin{prop}[Brown \cite{brown1961note}]\label{prop:CMprop}
Let  $\sigma =(\sigma_1, \dots , \sigma_t)$, with $\sigma_1\leq \dots \leq \sigma_t$. There is $A\subseteq \{ 1, \dots , t\}$ such that $k=\sum_{i\in A} \sigma_i$, for every integer  $k$ with $0\leq k\leq \sigma_1 + \dots + \sigma_t$, if and only if $\sigma$ satisfies the changemaker condition.
\end{prop}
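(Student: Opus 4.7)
The plan is to handle the two implications in Proposition~\ref{prop:CMprop} separately. Throughout, I treat the $\sigma_i$ as nonnegative integers, as is the case in the application to changemaker lattices. The sufficiency direction (the changemaker condition implies every $0 \leq k \leq \sigma_1 + \cdots + \sigma_t$ is a subset sum) I would prove by induction on $t$. The necessity direction I would handle by exhibiting a nonrepresentable value whenever the changemaker condition fails.

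For sufficiency, the base case $t = 1$ is immediate since $\sigma_1 \in \{0,1\}$: the empty and full subsets realize the required sums. For the inductive step, given $0 \leq k \leq \sigma_1 + \cdots + \sigma_t$, I would split into two cases. If $k \leq \sigma_1 + \cdots + \sigma_{t-1}$, I apply the inductive hypothesis to $\{\sigma_1, \dots, \sigma_{t-1}\}$ directly. If instead $\sigma_1 + \cdots + \sigma_{t-1} < k \leq \sigma_1 + \cdots + \sigma_t$, I set $k' = k - \sigma_t$; the upper bound $k' \leq \sigma_1 + \cdots + \sigma_{t-1}$ is clear, and the lower bound $k' \geq 0$ follows from $k > \sigma_1 + \cdots + \sigma_{t-1} \geq \sigma_t - 1$, which is exactly where the changemaker inequality $\sigma_t \leq \sigma_1 + \cdots + \sigma_{t-1} + 1$ is used. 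The inductive hypothesis then produces $A' \subseteq \{1, \dots, t-1\}$ with $\sum_{i \in A'} \sigma_i = k'$, and $A = A' \cup \{t\}$ represents $k$.

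For necessity, suppose every integer in $[0, \sigma_1 + \cdots + \sigma_t]$ is a subset sum. To force $\sigma_1 \leq 1$, I use the fact that the integer $1$ must be representable; but any nonempty subset sum is at least $\sigma_1$, so necessarily $\sigma_1 \in \{0, 1\}$. To force $\sigma_i \leq \sigma_1 + \cdots + \sigma_{i-1} + 1$ for each $i > 1$, I argue by contradiction: if $\sigma_i \geq \sigma_1 + \cdots + \sigma_{i-1} + 2$, then $k = \sigma_1 + \cdots + \sigma_{i-1} + 1$ cannot be represented, since any subset containing an index $j \geq i$ has sum at least $\sigma_i > k$, while any subset using only indices below $i$ has sum at most $k - 1$.

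The entire argument is a short classical combinatorial exercise, so I do not anticipate a serious obstacle. The only delicate point is verifying $k' \geq 0$ in the inductive step, which makes direct use of the top changemaker inequality for $\sigma_t$; this is precisely what the hypothesis is designed to supply, and it is worth flagging in the write-up to show where the changemaker condition enters.
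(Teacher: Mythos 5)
Your proof is correct. The paper itself cites this result to Brown without reproducing a proof, so there is no internal argument to compare against; what you have written is the standard self-contained proof. Both directions are sound: in the sufficiency induction, the key point you flag — that $k > \sigma_1 + \cdots + \sigma_{t-1}$ together with $\sigma_t \leq \sigma_1 + \cdots + \sigma_{t-1} + 1$ gives $k - \sigma_t \geq 0$ — is exactly where the changemaker inequality does its work, and the inductive hypothesis legitimately applies because the changemaker condition is inherited by initial segments. In the necessity direction, the witness $k = \sigma_1 + \cdots + \sigma_{i-1} + 1$ is the right nonrepresentable value when the $i$-th inequality fails, and the observation that any nonempty subset sum is at least $\sigma_1$ correctly forces $\sigma_1 \leq 1$. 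No gaps.
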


Now we are ready to define changemaker lattices. It is convenient to define integer and non-integer changemaker lattices separately, although the two are clearly similar.

\begin{defn}[Integral changemaker lattice]\label{def:intCMlattice}
First suppose that $q=1$, so that $p/q>0$ is an integer. Let $f_0, \dotsc, f_t$ be an orthonormal basis for $\mathbb{Z}^t$. Let $w_0=\sigma_1 f_1 + \dotsb + \sigma_t f_t$ be a vector such that $\norm{w_0}=p$ and $(\sigma_1, \dotsb, \sigma_t)$ satisfies the changemaker condition. Then
\[L=\langle w_0\rangle^\bot \subseteq \mathbb{Z}^{t+1}\]
is a {\em $p/q$-changemaker lattice}. Let $m$ be minimal such that $\sigma_m>1$. We define the {\em stable coefficients} of $L$ to be the tuple $(\sigma_m, \dots, \sigma_t)$. If no such $m$ exists, then we take the stable coefficients to be the empty tuple.
\end{defn}

\begin{defn}[Non-integral changemaker lattice]\label{def:nonintCMlattice}
Now suppose that $q\geq 2$ so that $p/q>0$ is not an integer. This has continued fraction expansion of the form $p/q=[a_0,a_1, \dots , a_l]^{-}$, where $a_k\geq 2$ for $1\leq k \leq l$ and $a_0=\lceil \frac{p}{q}\rceil \geq 1$. Now define
\[m_0=0 \text{ and } m_k=\sum_{i=1}^ka_i -k \text{ for } 1\leq k \leq l.\]
Set $s=m_{l}$ and let $f_1, \dotsc, f_t, e_0, \dotsc, e_s$ be an orthonormal basis for the lattice $\mathbb{Z}^{t+s+1}$.
Let $w_0=e_0+\sigma_1 f_1 + \dotsb + \sigma_t f_t,$ be a vector such that $(\sigma_1, \dots, \sigma_t)$ satisfies the changemaker condition and $\norm{w_0}=a_0$. For $1\leq k \leq l$, define
\[w_k=-e_{m_{k-1}}+e_{m_{k-1}+1}+ \dotsb + e_{m_{k}}.\]
We say that
\[L=\langle w_0, \dotsc, w_l\rangle^\bot \subseteq \mathbb{Z}^{t+s+1}\]
is a {\em $p/q$-changemaker lattice}. Let $m$ be minimal such that $\sigma_m>1$. We define the {\em stable coefficients} of $L$ to be the tuple $(\sigma_m, \dots, \sigma_t)$. If no such $m$ exists, then we take the stable coefficients to be the empty tuple.
\end{defn}

\begin{rem}Since $m_k-m_{k-1}=a_k-1$, the vectors $w_0, \dotsc, w_l$ constructed in Definition~\ref{def:nonintCMlattice} satisfy
\[
w_i.w_j =
  \begin{cases}
   a_j            & \text{if } i=j\\
   -1       & \text{if } |i-j|=1\\
   0        & \text{otherwise.}
  \end{cases}
\]
\end{rem}
\begin{rem}\label{rem:CMdetermined}
Let $L$ be a $p/q$-changemaker lattice:
 \[L=\langle w_0=e_0+\sigma_1 f_1 + \dotsb + \sigma_t f_t, w_1, \dotsc, w_l\rangle^\bot \subseteq \mathbb{Z}^{t+s+1}.\]
 By definition, the stable coefficients determine the values of the $\sigma_i$ satisfying $\sigma_i>1$. Since $\norm{w_0}=\lceil \frac{p}{q}\rceil$, the stable coefficients fix the number of $\sigma_i$ equal to 1 and this accounts for all non-zero $\sigma_i$. It follows that the number of $\sigma_i$ equal to zero can be deduced from the rank of $L$. Thus we see that the value $p/q$, the stable coefficients and the rank determine $L$ uniquely. Since we have $f_i\in L$ if and only if $\sigma_i=0$, any two $p/q$-changemaker lattices $L$ and $L'$ with the same stable coefficients and $\rk(L') = \rk(L) + k$ satisfy $L'\cong L \oplus \mathbb{Z}^k$
 \end{rem}

\subsection{Sharp 4-manifolds}\label{sec:sharp}
Now we will give a summary of the necessary background on Heegaard Floer homology and its $d$-invariants.
Let $Y$ be a rational homology 3-sphere. Its Heegaard Floer homology, $\widehat{HF}(Y)$, when defined with coefficients in $\mathbb{Z}/2\mathbb{Z}$, takes the form of a finite dimensional vector space over $\mathbb{Z}/2\mathbb{Z}$. The group $\widehat{HF}(Y)$ splits as a direct sum over \spinc-structures:
\[\widehat{HF}(Y)\cong \bigoplus_{\spincs \in \spinc(Y)}\widehat{HF}(Y,\spincs),\]
where $\widehat{HF}(Y,\spincs)\ne 0$ for all $\spincs \in \spinc(Y)$. We say that $Y$ is an {\em $L$-space} if $\widehat{HF}(Y)$ is as small as possible:
\[\dim_{\mathbb{F}_2} \widehat{HF}(Y) = |H^2(Y;\mathbb{Z})|=|\spinc(Y)|.\]
Associated to each summand there is a numerical invariant $d(Y,\spincs)\in \mathbb{Q}$, called the {\em $d$-invariant} \cite{Ozsvath03Absolutely}. If $Y$ is the boundary of a smooth negative-definite 4-manifold $X$, then for any $\spinct \in \spinc(X)$ which restricts to $\spincs \in \spinc(Y)$ there is a bound on the corresponding $d$-invariant:
\begin{equation}\label{eq:sharpdef}
c_1(\spinct)^2+b_2(X)\leq 4d(Y,\spincs).
\end{equation}
We say that $X$ is {\em sharp} if for every $\spincs \in \spinc(Y)$ there is some $\spinct \in \spinc(X)$ which restricts to $\spincs$ and attains equality in \eqref{eq:sharpdef}.

\paragraph{}We will be interested in the case where $Y$ arises as surgery on a knot in $S^3$. Let $K \subset S^3$ be a knot. For fixed $p/q \in \mathbb{Q}\setminus \{0\}$, there are canonical identifications \cite{ozsvath2011rationalsurgery}:
\[\spinc(S_{p/q}^3(K)) \leftrightarrow \mathbb{Z}/p\mathbb{Z} \leftrightarrow \spinc(S_{p/q}^3(U)).\]
Using these identifications we are able to define
\[D_{p/q}(i):=d(S_{p/q}^3(K),i)-d(S_{p/q}^3(U),i),\]
for each $i \in \mathbb{Z}/p\mathbb{Z}$.

\paragraph{}The work of Ni and Wu shows that for $0\leq i \leq p-1$ these values may be calculated by the formula \cite[Proposition 1.6]{ni2010cosmetic},
\begin{equation}\label{eqn:NiWuHV}
D_{p/q}(i)=-2\max\{V_{\lfloor \frac{i}{q} \rfloor},H_{\lfloor \frac{i-p}{q} \rfloor}\},
\end{equation}
where $V_j$ and $H_j$ are sequences of positive integers depending only on $K$, which are non-increasing and non-decreasing respectively. These further satisfy $H_{-j}=V_j=0$ for $j\geq g(K)$, where $g(K)$ is the genus of $K$. In fact, it can be shown that $V_j=H_{-j}$ for all $j$ \cite[Proof of Theorem~3]{owensstrle2013immersed}. Using these properties of the $V_j$ and $H_j$, \eqref{eqn:NiWuHV} can be rewritten as
\begin{equation}\label{eqn:NiWuVonly}
D_{p/q}(i)=-2V_{\min \{ \lfloor \frac{i}{q}\rfloor , \lceil \frac{p-i}{q}\rceil \}}.
\end{equation}
Let $p/q=[a_0, \dotsc, a_l]^-$ be the continued fraction of $p/q$ with $a_0\geq 1$ and $a_i\geq 2$ for $i\geq 1$. The changemaker theorem we will use is the following.

\begin{thm}[Gibbons \cite{gibbons2013deficiency}]\label{thm:Gibbons}
Let $K \subset S^3$ be a knot and suppose that for some $p/q>0$, $S^3_{p/q}(K)$ bounds a smooth, negative-definite 4-manifold $X$ with intersection form $Q_X$. If the manifold $X$ is sharp, then $-Q_X$ embeds into $\mathbb{Z}^{b_2(X)+l+1}$ as a $p/q$-changemaker lattice,
\[-Q_X\cong L=\langle w_0, \dotsc, w_l\rangle^\bot \subseteq \mathbb{Z}^{t+s+1},\]
where $w_0$ satisfies the formula:
\begin{equation}\label{eq:CMformula}
8V_{|i|} = \min_{\substack{ c\cdot w_0 \equiv a_0 + 2i \bmod 2a_0 \\ c \in \Char(\mathbb{Z}^{t+1})}} \norm{c} - t -1,
\end{equation}
for $|i|\leq a_0/2$.
\end{thm}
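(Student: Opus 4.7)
The plan is to reduce to a combinatorial analysis of characteristic vectors in a diagonal lattice, following the strategy Greene used in the lens space case. First, I would cap off $X$ with a standard 4-manifold $N$ built from the continued fraction $p/q=[a_0,\dots,a_l]^-$: attach a 2-handle to $B^4$ along $K$ with integer framing $a_0$, then attach a chain of 2-handles along meridional unknots with framings $a_1,\dots,a_l$. The result has $\partial N = S^3_{p/q}(K)$ and intersection form equal to the tridiagonal matrix $A=(a_i\delta_{ij}-\delta_{|i-j|,1})$, which is positive-definite since $a_i\geq 2$ for $i\geq 1$ and $a_0\geq 1$. Gluing yields the closed negative-definite smooth 4-manifold $W=X\cup_Y(-N)$ with $b_2(W)=b_2(X)+l+1=:n$, and Donaldson's diagonalization theorem gives $-Q_W\cong\mathbb{Z}^n$. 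A comparison of discriminants (both $A$ and $Q_X$ have discriminant $p=|H_1(Y)|$) then shows that $-Q_X$ embeds in $\mathbb{Z}^n$ as the full orthogonal complement of the sublattice spanned by the images $w_0,\dots,w_l$ of the generators of $H_2(-N)$, with Gram matrix matching $A$.

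Next, I would normalize coordinates to bring $w_1,\dots,w_l$ into the standard form of Definition~\ref{def:nonintCMlattice}. Since $w_l$ has square $a_l$, it must equal a signed sum of $a_l$ distinct coordinate vectors; proceeding inductively down to $w_1$ using the relations $w_i\cdot w_{i+1}=-1$ and $w_i\cdot w_j=0$ for $|i-j|\geq 2$, each $w_k$ is pinned to the required shape $-e_{m_{k-1}}+e_{m_{k-1}+1}+\cdots+e_{m_k}$ up to permutation and sign of basis vectors. The residual constraints that $w_0$ be orthogonal to $w_1,\dots,w_l$ and satisfy $w_0\cdot w_1=-1$ then force $w_0=e_0+\sigma_1 f_1+\cdots+\sigma_t f_t$ for some nonnegative $\sigma_i$ with $\sum\sigma_i^2=a_0-1$, where $e_0$ is the unique basis vector in the support of $w_1$ orthogonal to $w_1$ and $f_1,\dots,f_t$ are the standard basis vectors of $\mathbb{Z}^n$ outside the support of $w_1,\dots,w_l$.

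The main work is to translate sharpness of $X$ into formula \eqref{eq:CMformula} and then deduce the changemaker condition. Sharpness provides, for each $\spinct\in\spinc(Y)$, a characteristic vector $\gamma$ of $-Q_X$ with $\norm{\gamma}=b_2(X)-4d(Y,\spinct)$. Lifting $\gamma$ to a characteristic vector $c$ of $\mathbb{Z}^n$ and exploiting the fact that $w_1,\dots,w_l$ are supported only on the $e_i$, one argues that the $e_i$-coordinates of $c$ can be freely modified without changing $c\cdot w_0$ modulo $2a_0$; this reduces the minimization to characteristic vectors of $\mathbb{Z}^{t+1}=\langle e_0,f_1,\dots,f_t\rangle$, whose minimum squared norm is exactly $t+1$. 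Combining with the Ni--Wu formula \eqref{eqn:NiWuVonly} and subtracting the unknot contribution (which aligns $\spinc(Y)\leftrightarrow\mathbb{Z}/p\mathbb{Z}$ with the correct residue classes modulo $2a_0$) yields the identity $8V_{|i|}=\min\norm{c}-t-1$. Finally, the existence of minimizing characteristic vectors for every residue class translates, via $c_0=\pm 1$ and the involution $\sigma_i\mapsto-\sigma_i$ on signed sums, into the statement that every integer in $[0,\sigma_1+\cdots+\sigma_t]$ is a subset sum of $(\sigma_1,\dots,\sigma_t)$, and Proposition~\ref{prop:CMprop} then forces the changemaker condition. The hard part is the bookkeeping in this final reduction, in particular verifying that the $\spinc$-structure identifications are compatible with the projection to $\mathbb{Z}^{t+1}$ so that the residues of $c\cdot w_0$ line up with $a_0+2i$ and the normalization constant $t+1$ emerges correctly.
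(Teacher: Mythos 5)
This theorem is not proved in the paper: it is quoted from Gibbons \cite{gibbons2013deficiency}, with the caveat (stated just after the theorem) that formula \eqref{eq:CMformula} is not explicit in Gibbons' paper but follows from his proof together with Greene's integer-surgery case. So there is no in-paper argument to compare against; what you have written is an outline of the known Greene--Gibbons strategy, and as an outline of the overall architecture (cap off with the linear plumbing $N$ realizing $[a_0,\dotsc,a_l]^-$, apply Donaldson diagonalization to $X\cup_Y(-N)$, identify $-Q_X$ with the full orthogonal complement by a discriminant count, then feed sharpness and the Ni--Wu formula into the characteristic-vector minimization) it is faithful. But two of your steps assert precisely the points where the actual difficulty of the theorem lives, so as a proof this has genuine gaps.

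First, in your normalization step you claim that since $\norm{w_l}=a_l$, the vector $w_l$ ``must equal a signed sum of $a_l$ distinct coordinate vectors.'' That does not follow from the norm: a vector of square $5$ in $\mathbb{Z}^n$ can be $2e_1+e_2$, and the Gram relations $w_i\cdot w_{i+1}=-1$, $w_i\cdot w_j=0$ do not by themselves exclude such configurations. Forcing every $w_k$ with $k\geq 1$ to have all coordinates in $\{0,\pm 1\}$ and, after a change of basis, to take the consecutive form $-e_{m_{k-1}}+e_{m_{k-1}+1}+\dotsb+e_{m_k}$ of Definition~\ref{def:nonintCMlattice} is a substantial part of Gibbons' argument; it uses the sharpness data (the supply of characteristic covectors attaining equality in \eqref{eq:sharpdef} for every \spinc-structure, compared against the $d$-invariants of $S^3_{p/q}(U)$ computed from the sharp linear plumbing), not just lattice bookkeeping. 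Second, the final step --- matching the \spinc-identifications so that the residues $c\cdot w_0\equiv a_0+2i \bmod 2a_0$ line up with the Ni--Wu indices, extracting \eqref{eq:CMformula}, and then deducing from the $i$ with $V_{|i|}=0$ that every integer in $[0,\sigma_1+\dotsb+\sigma_t]$ is a subset sum so that Proposition~\ref{prop:CMprop} applies --- is exactly the content of the theorem, and you defer it as ``bookkeeping.'' If you intend this as a proof rather than a reading guide to \cite{greene2010space} and \cite{gibbons2013deficiency}, both of these steps need to be carried out; as it stands, citing those papers (as the present paper does) is the honest option.
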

Here $\Char(\mathbb{Z}^{t+1})$ denotes the set of all characteristic vectors in $\mathbb{Z}^{t+1}$, where a {\em characteristic vector} $x\in \mathbb{Z}^{t+1}$ is one with odd coefficients with respect to any orthonormal basis for $\mathbb{Z}^{t+1}$.

The equation \eqref{eq:CMformula} is not explicitly stated by Gibbons. However, Greene shows that it holds in the case of integer surgeries \cite{greene2010space} and it follows from Gibbons' proof that it must also hold for non-integer surgeries. Further discussion of this can be found in \cite{mccoy2014alexpoly}.
 %Theorem~\ref{thm:Gibbons} omits the hypotheses on the $d$-invariants of $S^3_{p/q}(K)$ which were present in Gibbons' original statement, since it can be shown that they are automatically satisfied (cf. \cite[Section~2]{mccoy2014noninteger}). %It is also worth observing that we do not require $X$ to be simply-connected. The only way this condition was used in the proof of Theorem~\ref{thm:Gibbons} is to ensure that the restriction map $\spinc(X) \rightarrow \spinc(S^3_{p/q}(K))$ is surjective.

\subsection{Calculating stable coefficients}\label{sec:CMinvariant}
We will deduce Theorem~\ref{thm:Gibbonssouped} from Theorem~\ref{thm:Gibbons} by showing that \eqref{eq:CMformula} determines the stable coefficients uniquely. The argument is entirely combinatorial and uses only the properties of the $V_i$ stated in Section~\ref{sec:sharp}.

Let $(V_i)_{i\geq0}$ be the non-increasing, non-negative sequence
\[V_0\geq V_1 \geq \dotsb \geq V_{\tilde{g}-1}> V_{\tilde{g}}=V_{\tilde{g}+1}= \dotsb=0, \]
 for which $V_i=0$ if and only if $i\geq \tilde{g}$ and $V_i\leq V_{i+1}+1$ for all $i$. Suppose that there is $\rho=(\rho_0, \dotsc, \rho_t) \in \mathbb{Z}^{t+1}$, with $\norm{\rho}=n\geq 2\tilde{g}$ such that
\begin{equation}\label{eq:Vi1}
8V_{|k|} = \min_{\substack{c\cdot \rho \equiv n + 2k \bmod 2n \\c \in \Char(\mathbb{Z}^{t+1})}} \norm{c} - t -1,
\end{equation}
for $|k| \leq n/2$. Possibly after an automorphism of $\mathbb{Z}^{t+1}$, we may assume that $\rho_i\geq 0$ for all $i$ and that the $\rho_i$ form a decreasing sequence:
\[\rho_0\geq \rho_1\geq  \dotsb \geq \rho_t\geq 0.\]
Observe that \eqref{eq:Vi1} has three pieces of input data, the sequence $(V_i)_{i\geq 0}$ and the integers $n$ and $t$. Given some choice of $(V_i)_{i\geq 0}$, $n$ and $t$, there is no guarantee that there is $\rho$ satisfying \eqref{eq:Vi1}. However, we will show that when there is such a $\rho$, then it is unique. Moreover we will see that the coefficients of $\rho$ satisfying $\rho_i>1$ are determined by the sequence $(V_i)_{i\geq 0}$.
\begin{rem}\label{rem:removezeroes}
If $\rho_t=0$, then any minimiser in the right hand side of \eqref{eq:Vi1} must have $c_t=\pm 1$. So we see that $\rho'=(\rho_0, \dotsc, \rho_{t-1})$ satisfies
\begin{equation*}
8V_{|k|} = \min_{\substack{c\cdot \rho' \equiv n + 2k \bmod 2n \\c \in \Char(\mathbb{Z}^{t})}}\norm{c} - t,
\end{equation*}
for all $0\leq |k|\leq n/2$. This allows us to assume that $\rho_i\geq 1$ for all $i$.
\end{rem}
If we restrict our attention to $0\leq k \leq n/2$, we find that \eqref{eq:Vi1} simplifies as follows.
\begin{lem}\label{lem:modremoved}
For $0\leq k \leq n/2$,
\[8V_k = \min_{\substack{c\cdot \rho = 2k-n \\ c \in \Char(\mathbb{Z}^{t+1})}} \norm{c} - t -1.\]
\end{lem}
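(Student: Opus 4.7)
The plan is to show that for $0 \leq k \leq n/2$, among all characteristic vectors $c \in \Char(\mathbb{Z}^{t+1})$ satisfying the congruence $c \cdot \rho \equiv n + 2k \pmod{2n}$ appearing in \eqref{eq:Vi1}, there is always a minimizer of $\norm{c}$ that lies on the particular coset representative $c \cdot \rho = 2k - n$. Since $\{c : c \cdot \rho = 2k - n\}$ is contained in $\{c : c \cdot \rho \equiv n + 2k \pmod{2n}\}$, the minimum over the smaller set is automatically at least the minimum over the larger; producing a minimizer in the subset forces equality, yielding the claimed identity.

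To exhibit such a minimizer, I would take any characteristic $c$ with $c \cdot \rho = 2k - n + 2jn$ for some integer $j$ and replace it with $c' = c - 2j\rho$. Because $2j\rho$ has all even coordinates with respect to the standard basis, $c'$ is still characteristic, and by construction $c' \cdot \rho = 2k - n$. Expanding the norm gives
\[\norm{c'} = \norm{c} - 4j(c \cdot \rho) + 4j^{2}\norm{\rho},\]
and substituting $c \cdot \rho = 2k - n + 2jn$ together with $\norm{\rho} = n$ simplifies this to
\[\norm{c} - \norm{c'} = 4j\bigl(2k - n + jn\bigr).\]

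The remaining step is to check that this quantity is nonnegative in every case, using the hypothesis $0 \leq k \leq n/2$. If $j \geq 1$, then $2k - n + jn \geq 2k \geq 0$, so the product is nonnegative; if $j \leq -1$, then $2k \leq n$ forces $2k - n + jn \leq jn < 0$, and again the product is nonnegative. Thus $\norm{c'} \leq \norm{c}$, so a minimizer in \eqref{eq:Vi1} may always be chosen with $c \cdot \rho = 2k - n$, which is the content of the lemma. There is no real obstacle here; the only subtle point is ensuring the parity hypothesis on $c$ is preserved under the substitution $c \mapsto c - 2j\rho$, which is immediate since $2j\rho \in 2\mathbb{Z}^{t+1}$.
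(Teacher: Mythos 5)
Your proof is correct and follows essentially the same approach as the paper: subtract $2j\rho$ from $c$ to move to the coset representative $c\cdot\rho = 2k-n$, compute $\norm{c}-\norm{c'} = 4j(jn + 2k - n)$, and use $0\leq k\leq n/2$ to see this is nonnegative for all integers $j$. The only (minor) addition you make is explicitly noting that $c-2j\rho$ remains characteristic, which the paper takes as immediate.
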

\begin{proof}
Suppose $c \in \Char(\mathbb{Z}^{t+1})$, satisfies $c\cdot \rho = 2mn-n + 2k$ for some $m\in \mathbb{Z}$. Consider the vector $c'=c-2m \rho$. This satisfies
\[c'\cdot \rho =2k-n \equiv c \cdot \rho \bmod 2n\]
and
\begin{align*}
\norm{c'}&= \norm{c} - 4m c \cdot \rho + 4m^2 n \\
         &= \norm{c} - 4m(nm-n+2k).
\end{align*}
Since we are assuming $-n\leq 2k-n\leq 0$, we have $m(nm+2k-n)\geq 0$ for all $m \in \mathbb{Z}$. Therefore, we have $\norm{c'}\leq \norm{c}$. This shows that if $c$ is a minimiser in \eqref{eq:Vi1} we can assume it satisfies $c\cdot \rho = 2k-n$.
\end{proof}

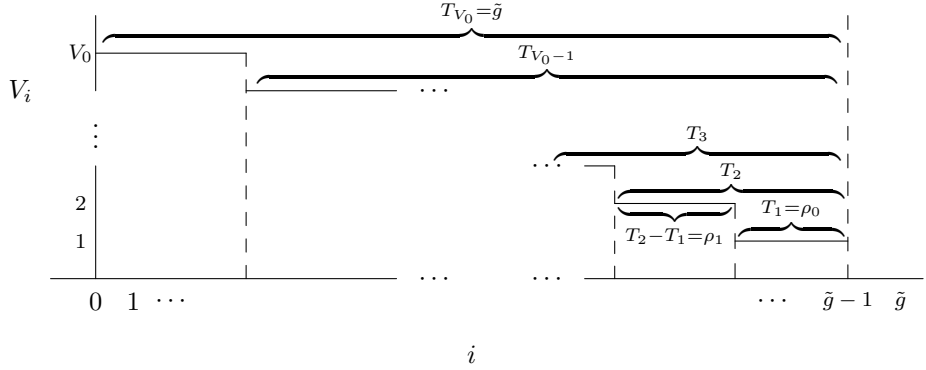
\begin{figure}[h]
\begin{xy}
<1cm,1cm>*{};<1cm,2.5cm>*{}**@{-},  %y-axis
<1cm,3.5cm>*{};<1cm,4.5cm>*{}**@{-},  %y-axis
<0.4cm,1cm>*{};<5cm,1cm>*{}**@{-},%x-axis
<5.5cm,1cm>*{\dotsb},             %x-axis
<7cm,1cm>*{\dotsb},             %x-axis
<7.5cm,1cm>*{};<12cm,1cm>*{}**@{-},  %x-axis
<1cm,3cm>*{\vdots},             %y-axis
<0cm,3.5cm>*{V_i},                %y-axis title
<6cm,0cm>*{i},                    %x-axis title
<1cm,4cm>*{};<3cm,4cm>*{}**@{-},    %horizontal line
<7.5cm,2.5cm>*{};<7.9cm,2.5cm>*{}**@{-},    %horizontal line
<3cm,1cm>*{};<3cm,4cm>*{}**@{--},   %vertical line
<3cm,3.5cm>*{};<5cm,3.5cm>*{}**@{-},%horizontal line
<5.5cm,3.5cm>*{\dotsb},             %horizontal line dots
<7cm,2.5cm>*{\dotsb},             %horizontal line dots
<7.9cm,2cm>*{};<9.5cm,2cm>*{}**@{-},  %horizontal line
<7.9cm,2.5cm>*{};<7.9cm,1cm>*{}**@{--},   %vertical line
<9.5cm,2cm>*{};<9.5cm,1cm>*{}**@{--},   %vertical line
<9.5cm,1.5cm>*{};<11cm,1.5cm>*{}**@{-},  %horizontal line
<11cm,4.5cm>*{};<11cm,1cm>*{}**@{--},   %vertical line
<1cm,0.7cm>*{0},        %x-axis label
<2cm,0.7cm>*{\dotsb},   %x-axis label
<10cm,0.7cm>*{\dotsb}, %x-axis label
<1.5cm,0.7cm>*{1},
<11cm,0.7cm>*{\text{{\footnotesize $\tilde{g}-1$}}}, %x-axis label
<11.7cm,0.7cm>*{\text{{\footnotesize $\tilde{g}$}}}, %x-axis label
<0.8cm,4cm>*{\text{{\footnotesize $V_0$}}},        %y-axis label
<0.8cm,2cm>*{\text{{\footnotesize $2$}}},        %y-axis label
<0.8cm,1.5cm>*{\text{{\footnotesize $1$}}},        %y-axis label
<10.25cm,1.8cm>*{\overbrace{\text{\makebox[1.4cm]{}}}^{T_1=\rho_0}}, %overbraces
<9.45cm,2.3cm>*{\overbrace{\text{\makebox[3cm]{}}}^{T_2}}, %overbraces
<9cm,2.8cm>*{\overbrace{\text{\makebox[3.8cm]{}}}^{T_3}}, %overbraces
<7cm,3.85cm>*{\overbrace{\text{\makebox[7.7cm]{}}}^{T_{V_0-1}}}, %overbraces
<6cm,4.4cm>*{\overbrace{\text{\makebox[9.8cm]{}}}^{T_{V_0}=\tilde{g}}}, %overbraces
<8.7cm,1.7cm>*{\underbrace{\text{\makebox[1.5cm]{}}}_{T_2-T_1=\rho_1}},
\end{xy}
\caption{A graph to show the relationship between the $V_i$ and the $T_i$. We have also shown how $\rho_0$ and $\rho_1$ occur as the number of $V_i$ equal to one and two, respectively.}
\label{fig:tvdiagram}
 \end{figure}

For $m \geq 1$, it will be convenient to consider the quantities
\[T_m=|\{0\leq i <\tilde{g} \,|\, 0<V_i\leq m\}|.\]
These are illustrated in Figure~\ref{fig:tvdiagram}. We will show how to calculate these in terms of $\rho$. First we need to define the following collection of tuples for each $m\geq 0$:
\[S_m=\{\alpha \in \mathbb{Z}^{r+1}\colon \alpha_i \geq 0, 2m=\sum\alpha_i(\alpha_i + 1) \}.\]
\begin{lem}\label{lem:calcTi}
For $0\leq m<V_0$, we can calculate $T_m$ by
\[T_m = \max_{\alpha \in S_m} \rho \cdot \alpha\]
and $T_{V_0}$ satisfies
\[T_{V_0}=\tilde{g}=\frac{1}{2}\sum_{i=0}^t \rho_i^2-\rho_i\]
and
\[T_{V_0}\leq \max_{\alpha \in S_{V_0}} \rho \cdot \alpha.\]
\end{lem}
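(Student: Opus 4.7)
The plan is to parameterise characteristic vectors coordinatewise as $c_i = \epsilon_i(2\alpha_i + 1)$ with $\alpha_i \in \mathbb{Z}_{\geq 0}$ and $\epsilon_i \in \{\pm 1\}$; under this parameterisation $\norm{c} - t - 1 = 4\sum\alpha_i(\alpha_i + 1)$, so a competitor in the minimisation of Lemma~\ref{lem:modremoved} with $\norm{c} - t - 1 = 8m$ is precisely a pair $(\alpha, \epsilon)$ with $\alpha \in S_m$, and the linear constraint becomes $\sum\epsilon_i(2\alpha_i + 1)\rho_i = 2k - n$.

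Substituting $c = -\mathbf{1}$ into Lemma~\ref{lem:modremoved} gives $V_{(n - \sum\rho_i)/2} = 0$; since $\sum\epsilon_i\rho_i \geq -\sum\rho_i$ for every sign vector (using $\rho_i \geq 0$), no smaller non-negative index realises $V = 0$, so $\tilde g = (n - \sum\rho_i)/2 = \tfrac{1}{2}\sum(\rho_i^2 - \rho_i)$. This establishes the formula asserted in the $m = V_0$ case and in particular gives $T_{V_0} = \tilde g$. The hypothesis $V_i \leq V_{i+1} + 1$ prevents $(V_i)$ from skipping integer values as it descends, so for each $0 \leq m \leq V_0$ the first index $k_m$ with $V_{k_m} \leq m$ actually satisfies $V_{k_m} = m$, whence $T_m = \tilde g - k_m$.

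The lower bound $T_m \leq \max_{\alpha \in S_m} \rho \cdot \alpha$ (which when run at $k = 0$ also yields the residual inequality $T_{V_0} \leq \max_{\alpha \in S_{V_0}} \rho \cdot \alpha$) follows by extracting $(\alpha, \epsilon)$ from a minimiser realising $V_{k_m} = m$: each $(1 + \epsilon_i)(2\alpha_i + 1)\rho_i$ is non-negative, and summing these together with the constraint $\sum\epsilon_i(2\alpha_i + 1)\rho_i = 2k_m - n$ rearranges to $\rho \cdot \alpha \geq T_m$.

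For the matching upper bound $\max_{\alpha \in S_m} \rho \cdot \alpha \leq T_m$ when $m < V_0$, given $\alpha \in S_m$ I consider the characteristic vector $c = -(\mathbf{1} + 2\alpha)$, which satisfies $\norm{c} - t - 1 = 8m$ and $c \cdot \rho = 2(\tilde g - \rho \cdot \alpha) - n$. When $\rho \cdot \alpha \leq \tilde g$ this directly places $c$ in the competitor set for $V_{\tilde g - \rho \cdot \alpha}$, forcing $\rho \cdot \alpha \leq T_m$. The main obstacle is the residual case $\rho \cdot \alpha > \tilde g$: here $c \cdot \rho < -n$, and shifting $c$ by an appropriate multiple of $2\rho$ (and negating if necessary) yields a characteristic vector of strictly smaller norm $\norm{c} - 8(\rho \cdot \alpha - \tilde g)$ in the same class modulo $2n$, which upon reducing the index into $[0, n/2]$ produces the bound $V_{\rho \cdot \alpha - \tilde g} \leq m - (\rho \cdot \alpha - \tilde g)$. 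Combined with the step bound $V_j \geq V_0 - j$ (itself a consequence of $V_i \leq V_{i+1} + 1$) together with $V_j \geq 0$, this forces $V_0 \leq m$, contradicting $m < V_0$. Hence the residual case cannot occur and the proof is complete.
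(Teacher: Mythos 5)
Your proof is correct and follows essentially the same route as the paper: compute $\tilde{g}$ from the extremal characteristic vector $c=-\mathbf{1}$, translate the minimisation in Lemma~\ref{lem:modremoved} via $c_i=\epsilon_i(2\alpha_i+1)$, and identify $T_m$ with $\max_{\alpha\in S_m}\rho\cdot\alpha$. One thing you do that the paper's proof skips over is the direction $\max_{\alpha\in S_m}\rho\cdot\alpha\leq T_m$: the paper asserts that the minimiser of $2k$ in \eqref{eq:calcTiformula} lies in $S_m$ and stops there, whereas you explicitly construct the competitor $c=-(\mathbf{1}+2\alpha)$ for each $\alpha\in S_m$ and then dispose of the potential residual case $\rho\cdot\alpha>\tilde g$ by the shift-by-$2\rho$ argument, which forces $V_0\leq m$. (For that step it is worth noting that $\norm{c+2\rho}\geq t+1$ already forces $j=\rho\cdot\alpha-\tilde g\leq m<V_0\leq\tilde g\leq n/2$, so the shifted index lands in $[0,n/2]$ without further reduction.) This makes your write-up more airtight on a point where the paper is terse, but the underlying strategy is the same.
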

\begin{proof}
Since the $V_k$ form a decreasing sequence with $V_k=0$ if and only if $k\geq \tilde{g}$, we necessarily have $T_{V_0}=\tilde{g}$. Using Lemma~\ref{lem:modremoved}, we that $V_k=0$ if and only if there is $c\in \{\pm 1\}^{t+1}$ with $c\cdot \rho=2k-n$. The smallest of value $k$ for which this is true is $k=\frac{1}{2}(n-\sum_{i=0}^{t} \rho_i)$, which is obtained by taking $c=\{-1\}^{t+1}$. Thus we get $2\tilde{g}=\sum_{i=0}^{t} \rho_i^2-\rho_i$, as required (cf. \cite[Proposition 3.1]{greene2010space}).

\paragraph{}Now observe that for $0\leq m<V_0$, we have
\[T_m= \tilde{g}-\min\{k \colon V_k=m\}.\]
By Lemma~\ref{lem:modremoved}, $V_k = m$ and $0\leq k<n/2$ implies there is $c\in \Char(\mathbb{Z}^{t+1})$ such that $\norm{c}-t-1=8m$ and $c\cdot \rho = 2k-n$. If we write the coefficients of $c$ in the form $c_i=-(2\alpha_i +1)$, then $\sum_{i=0}^{t}(\alpha_i(\alpha_i+1)=2m$ and
\begin{equation}\label{eq:calcTiformula}
2k= n - \sum_{i=0}^t \rho_j - 2\alpha \cdot \rho=2\tilde{g} -2\alpha \cdot \rho.
\end{equation}
We see that for any $\alpha$ minimising \eqref{eq:calcTiformula}, we must have $\alpha \in S_m$, since it must satisfy $\alpha_i\geq 0$ for all $i$. Thus we we see that
\[T_m = \max_{\alpha \in S_m} \rho \cdot \alpha,\]
for $0\leq m<V_0$.
The equation \eqref{eq:calcTiformula} also shows that there must exist $\alpha$ satisfying $\sum_{i=0}^{t}\alpha_i(\alpha_i+1)=2V_0$ and $\alpha \cdot \rho = \tilde{g}$. This implies the inequality
\[T_{V_0}\leq \max_{\alpha \in S_{V_0}} \rho \cdot \alpha,\]
which completes the proof.
\end{proof}
\begin{rem}\label{rem:rho0rho1} It follows from this lemma that $T_1=\rho_0$ and $T_2=\rho_0+\rho_1$. In particular this implies that $\rho_1=T_2-T_1$. This is illustrated in Figure~\ref{fig:tvdiagram}.
\end{rem}
We now begin the process of showing how the remaining $\rho_i$ can be recovered from the sequence $(V_i)_{i\geq0}$. We begin with the simplest case, which is when $V_0\leq 1$.
\begin{lem}\label{lem:V0=1}
If $V_0\leq 1$, then $\tilde{g}\leq 3$ and $\rho$ takes the form
\[
\rho=
\begin{cases}
(1,1 , \dotsc, 1)   &\text{if }\tilde{g}=0\\
(2,1 , \dotsc, 1)   &\text{if }\tilde{g}=1\\
(2,2 ,1, \dotsc, 1) &\text{if }\tilde{g}=2\\
(3,1 , \dotsc, 1)   &\text{if }\tilde{g}=3.
\end{cases}\]
\end{lem}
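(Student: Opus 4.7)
The plan is to exploit the two pieces of information Lemma~\ref{lem:calcTi} gives about $\rho$: the exact identity $2\tilde g=\sum_{i=0}^{t}\rho_i(\rho_i-1)$, and the inequality $T_{V_0}\le \max_{\alpha\in S_{V_0}}\rho\cdot\alpha$. Throughout, by Remark~\ref{rem:removezeroes} we may assume $\rho_i\ge 1$ for every $i$.

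If $V_0=0$ then $\tilde g=0$, and the sum-of-squares identity forces $\rho_i(\rho_i-1)=0$ for all $i$; combined with $\rho_i\ge 1$ this gives $\rho_i=1$ throughout, i.e.\ $\rho=(1,\dotsc,1)$.

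The substance is the case $V_0=1$. Here Lemma~\ref{lem:calcTi} yields $T_{V_0}=T_1=\tilde g$. The set $S_1=\{\alpha\in\mathbb Z^{t+1}_{\ge 0}:\sum\alpha_i(\alpha_i+1)=2\}$ consists precisely of the standard basis vectors (since $\alpha_i(\alpha_i+1)\in\{0,2,6,\dotsc\}$), so $\max_{\alpha\in S_1}\rho\cdot\alpha=\rho_0$ and hence $\tilde g\le \rho_0$. Feeding this into $2\tilde g=\sum_i\rho_i(\rho_i-1)\ge \rho_0(\rho_0-1)$ gives $\rho_0(\rho_0-1)\le 2\rho_0$, whence $\rho_0\le 3$ and $\tilde g\le 3$. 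I would then dispatch the three subcases $\rho_0=1,2,3$. Using the ordering $1\le \rho_i\le \rho_0$, the sum-of-squares identity and the bound $\tilde g\le \rho_0$ pin down the admissible tuples: $\rho_0=1$ forces $\tilde g=0$ (already handled); $\rho_0=2$ allows either zero or one further coordinate equal to $2$, giving $\rho=(2,1,\dotsc,1)$ with $\tilde g=1$ or $\rho=(2,2,1,\dotsc,1)$ with $\tilde g=2$; and $\rho_0=3$ forces $\tilde g=3$ and all remaining $\rho_i=1$, giving $\rho=(3,1,\dotsc,1)$. This is exactly the list claimed.

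The main obstacle, such as it is, is that at $m=V_0$ Lemma~\ref{lem:calcTi} supplies only the one-sided bound $T_{V_0}\le \max_{\alpha\in S_{V_0}}\rho\cdot\alpha$ rather than equality, so one cannot simply read $\rho_0$ off from the $V_i$ as one does for $m<V_0$. In the present regime $V_0\le 1$ the exact formula is unavailable (there is no index $m<V_0$ to use), so the entire argument must ride on this one inequality together with the sum-of-squares identity. Fortunately the inequality $\tilde g\le \rho_0$ is already tight enough for a finite case analysis, and no sharper information is required.
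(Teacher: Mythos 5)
Your proposal is correct and follows essentially the same approach as the paper: both arguments extract from Lemma~\ref{lem:calcTi} the identity $2\tilde g=\sum_i\rho_i(\rho_i-1)$ together with the inequality $\tilde g=T_{V_0}\le\max_{\alpha\in S_1}\rho\cdot\alpha=\rho_0$, deduce $\rho_0\le 3$, and finish with the same finite case analysis on $\rho_0$.
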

\begin{proof}
If $V_0=0$, then $\tilde{g}=0$ and Lemma~\ref{lem:calcTi} implies that $\sum_{i=0}^t \rho_i^2-\rho_i=0$. This shows that we have $\rho_i=1$ for all $0\leq i \leq t$.

Suppose now that $V_0=1$. By Lemma~\ref{lem:calcTi}, we have
\[0<T_{1}=\tilde{g}=\frac{1}{2}\sum_{i=0}^t \rho_i^2-\rho_i\leq \max_{\alpha \in S_{1}} \rho \cdot \alpha.\]
Since $S_1$ consists of vectors with a single non-zero coordinate, which equals one, we have $\max_{\alpha \in S_{1}} \rho \cdot \alpha=\rho_0$. Thus we must have $\rho_0^2-\rho_0\leq 2\rho_0,$ and hence $\rho_0\leq 3$. If $\rho_0=3$, then we have
\[\tilde{g}=3+\frac{1}{2}\sum_{i=1}^t\rho_i(\rho_i-1)\leq \rho_0 =3,\]
which implies the $\rho_i=1$ for $1\leq i \leq t$ and $\tilde{g}=3$. If $\rho_0=2$, then $\tilde{g}\leq 2$ implies that $\rho_1\in \{1,2\}$, giving the other two possibilities in the statement of the lemma.
\end{proof}

From now on we will suppose that $V_0>1$. This allows us to define the quantity \[\mu = \min_{1\leq i<V_0} \{ T_i-T_{i-1}\}.\]
Since $T_1=\rho_0$ and $T_0=0$, we must have $\mu\leq \rho_0$.
\begin{lem}\label{lem:mubound}
If $\rho_0\geq 5$ or $\sum_{\rho_i \text{even}}\rho_i\geq 6$, then $\mu \leq 2$.
\end{lem}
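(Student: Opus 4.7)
The plan is to argue by contradiction: suppose $\mu\geq 3$, so that $T_m - T_{m-1}\geq 3$ for every $1\leq m<V_0$. By Lemma~\ref{lem:calcTi}, each $T_m$ is an explicit maximum $\max_{\alpha\in S_m}\rho\cdot\alpha$, so this assumption translates into a system of inequalities among the $\rho_i$, which I will show is incompatible with either hypothesis.

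The basic tool is the observation that $T_m-T_{m-1}$ can be bounded from above by exhibiting a near-maximizer in $S_{m-1}$ produced from a maximizer $\alpha^*\in S_m$ by a local modification. Two moves lower the level by exactly $1$: dropping a coordinate equal to $1$ (costing $\rho_j$ in the inner product), and swapping coordinates of values $a$ and $a-2$ (costing $\rho_j-\rho_k$). In particular, if the maximizer in some $S_m$ has $\alpha_j^*=1$ for an index $j$ with $\rho_j\leq 2$, then immediately $T_m-T_{m-1}\leq 2$.

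First I would invoke Remark~\ref{rem:rho0rho1} to get $T_2-T_1=\rho_1$, forcing $\rho_1\geq 3$. Next I enumerate $S_3=\{(2,0,\ldots),(1,1,1,0,\ldots)\}$ to obtain $T_3-T_2\in\{\rho_0-\rho_1,\rho_2\}$, yielding either $\rho_0\geq\rho_1+3$ or $\rho_2\geq 3$. Analogous enumeration of $S_4=\{(2,1,\ldots),(1,1,1,1,\ldots)\}$ and $S_5$ sharpens these constraints further, each time giving an inequality of the form $\rho_a-\rho_b\geq 3$ or $\rho_c\geq 3$ for some indices.

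For the hypothesis $\rho_0\geq 5$, this iterative bookkeeping is set against the global bound $T_{V_0-1}\geq 3(V_0-1)$ obtained by summing the assumed inequalities, which must be reconciled with $T_{V_0-1}<\tilde g=\tfrac{1}{2}\sum\rho_i(\rho_i-1)$. Combined with the structural constraints forced on $\rho$ by the validity of the sequence $(V_i)$ (non-increasing, dropping by at most one per step), this should rule out $\rho_0\geq 5$ in the presence of $\mu\geq 3$.

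For the hypothesis $\sum_{\rho_i\text{ even}}\rho_i\geq 6$, I would exploit the even coordinates directly. If $\rho_j=2$ occurs, the natural choice $\alpha_j^*=1$ in a low-level maximizer lets us drop that coordinate and obtain $T_m-T_{m-1}\leq 2$. For larger even $\rho_j=2k$, the balanced assignment $\alpha_j^*\in\{k-1,k\}$ pairs with a swap into a smaller coordinate to produce a difference at most $2$. The quantity $\sum_{\rho_i\text{ even}}\rho_i$ controls how much total mass is available for such balanced moves, and the bound $\geq 6$ is what guarantees one of them fits into the range $m<V_0$.

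The main obstacle is managing the branching case analysis: the shape of the maximizer in $S_m$ can change (from a single large coordinate to several ones) as $m$ grows, and several candidate forms must be tracked simultaneously. After using Remark~\ref{rem:removezeroes} to assume $\rho_i\geq 1$ throughout, the proof reduces to a finite but intricate check of low-level configurations, each of which either produces the desired small $T_m-T_{m-1}$ or forces the $\rho_i$ into a regime ruled out by the hypothesis.
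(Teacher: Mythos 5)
Your local-modification idea (drop a coordinate, or swap $a$ for $a-2$) is the same as the paper's key inequality~\eqref{eq:mubound}: from a maximizer $\alpha\in S_m$ one gets $\rho_l\geq T_m-T_{m-\alpha_l}\geq \alpha_l\mu$, so $\mu\leq 2$ as soon as some maximizer has $\rho_l/\alpha_l<3$. However, the way you propose to exploit this differs fundamentally from the paper and, as written, has a genuine gap: you never establish a lower bound on $V_0$. All your inequalities of the form $T_m-T_{m-1}\geq 3$ are only available for $m<V_0$, and your first-case argument (``$T_{V_0-1}\geq 3(V_0-1)$ versus $T_{V_0-1}<\tilde g$'') needs $V_0$ to be large relative to $\tilde g$ to bite; likewise the second case relies on ``the bound $\geq 6$ is what guarantees one of them fits into the range $m<V_0$,'' which is precisely the assertion left unproved. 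If $V_0$ were small (say $V_0=2$), the whole enumeration of $S_3, S_4, S_5$ would be vacuous.

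The paper closes exactly this hole with a Cauchy--Schwarz argument: since $8V_0=\min\{\norm{c}-t-1: c\in\Char(\mathbb{Z}^{t+1}),\ c\cdot\rho\equiv n\bmod 2n\}$ and any such $c$ satisfies $\norm{c}\geq n$, one gets $V_0\geq N:=\lfloor\frac18\sum(\rho_i^2-1)\rfloor$. The proof then shows (via the bound~\eqref{eq:maximiserbound}) that under either hypothesis there is a level $m\in\{N-1,N\}$ with $m<V_0$ at which no $\alpha\in S_m$ can respect the coordinatewise caps~\eqref{eq:rhobounds}, and one application of~\eqref{eq:mubound} finishes. This is a single uniform argument at a high level $m$ determined by $\rho$, and it avoids the branching bookkeeping you are worried about: you never enumerate $S_3,S_4,S_5$ or track how the shape of the maximizer evolves, you just count. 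To repair your approach you would at minimum need to derive the $V_0\geq N$ bound yourself, at which point you would likely be better served by adopting the paper's counting argument rather than the iterative small-$m$ enumeration. One further small point: when $\rho_0\geq 5$ you should separate $\rho_0$ odd from $\rho_0$ even; the even case $\rho_0\geq 6$ already falls under $\sum_{\rho_i\text{ even}}\rho_i\geq 6$, and the paper treats only $\rho_0$ odd explicitly in the first branch.
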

\begin{proof}
For $m<V_0$, Lemma~\ref{lem:calcTi} shows that there is $\alpha \in S_m$ such that $\rho \cdot \alpha=T_m$. If $\alpha_l>0$, then consider $\alpha'$ defined by
\[\alpha'_i=
\begin{cases}
\alpha_i & i\ne l\\
\alpha_i-1 &i=l.
\end{cases}\]
By construction, we have $\alpha'\in S_{m-\alpha_l}$ and $\alpha'\cdot \rho = \alpha \cdot \rho - \rho_l=T_m-\rho_l.$
As $\alpha' \cdot \rho \leq T_{m-\alpha_l}$, we get
\begin{equation}\label{eq:mubound}
\rho_l\geq T_m-T_{m-\alpha_l}\geq \alpha_l \mu.
\end{equation}
If we have a maximiser $\alpha \in S_m$ such that $\rho \cdot \alpha=T_m$ and $\alpha$ does not satisfy
\begin{equation}\label{eq:rhobounds}
\alpha_i \leq
\begin{cases}
\frac{\rho_i-2}{2}    &\rho_i   \text{ even,}\\
\frac{\rho_i-3}{2}    &\rho_i>3 \text{ odd,}\\
\frac{\rho_i-1}{2}    & \rho_i\in\{1,3\},
\end{cases}
\end{equation}
for all $i$, then there is $l$ such that $\frac{\rho_l}{\alpha_l}<3$. So by \eqref{eq:mubound}, we see that $\mu\leq 2$. We will show that if $\rho$ satisfies the hypotheses of the lemma, then such a maximiser must exist.

\paragraph{}Let $c\in \Char(\mathbb{Z}^{t+1})$ be such that $c\cdot\rho=n$. By \eqref{eq:Vi1}, we have
\[8V_0\geq \norm{c} -t-1.\]
On the other hand, the Cauchy-Schwarz inequality implies that
\[|c\cdot\rho|^2=n^2\leq \norm{\rho}\norm{c}=n\norm{c},\]
showing that $\norm{c}\geq n$ with equality if and only if $c=\rho$. Altogether, this yields
\[V_0\geq \frac{\norm{\rho}-t-1}{8}=\frac{1}{8}\sum_{i=0}^t (\rho_i^2-1),\]
with equality if and only if $\rho\in \Char(\mathbb{Z}^{t+1})$.
We will let $N$ denote the quantity
\[N=\lfloor \frac{1}{8}\sum_{i=0}^t (\rho_i^2-1) \rfloor\leq V_0.\]

\paragraph{}Now take $\alpha \in S_m$, which satisfies the conditions given by \eqref{eq:rhobounds}.
It follows that
\begin{align}
\begin{split}\label{eq:maximiserbound}
m&=\frac{1}{2}\sum_{i=0}^t \alpha_i (\alpha_i +1)\\
 &\leq \sum_{\rho_i >3 \text{ odd}}\frac{(\rho_i-3)(\rho_i-1)}{8} + \sum_{\rho_i \text{ even}}\frac{\rho_i(\rho_i-2)}{8} + \sum_{\rho_i \in \{1,3\}} \frac{\rho_i^2-1}{8}\\
 &=\sum_{i=0}^t \frac{\rho_i^2-1}{8} + \sum_{\rho_i>3 \text{ odd}}\frac{(1-\rho_i)}{2} +
\sum_{\rho_i \text{ even}}\frac{1-2\rho_i}{8}.
\end{split}
\end{align}
\paragraph{} If $\rho_0$ is odd and $\rho_0\geq 5$, then \eqref{eq:maximiserbound} shows that
\[m\leq \sum_{i=0}^t \frac{\rho_i^2-1}{8} -2 < N-1\]
In particular, there is no $\beta \in S_{N-1}$ satisfying \eqref{eq:rhobounds}. Since $N-1< V_0$, there is $\beta \in S_{N-1}$ with $\beta \cdot \rho=T_{N-1}$ and so \eqref{eq:mubound} implies that $\mu\leq 2$.
\paragraph{}If $\sum_{\rho_i \text{ even}}\rho_i\geq 6$, then we must have $\sum_{\rho_i \text{ even}}(2\rho_i-1)\geq \frac{3}{2}\sum_{\rho_i \text{ even}}\rho_i \geq 9$. Therefore, \eqref{eq:maximiserbound} shows that
\[m< \sum_{i=0}^t \frac{\rho_i^2-1}{8} -1 < N.\]
In particular, there is no $\beta \in S_{N}$ satisfying \eqref{eq:mubound}. Since we are assuming there is an even $\rho_i$, we have $N<V_0$ and so there exists $\beta\in S_{N}$ such that $\beta \cdot \rho=T_{N}$ and so \eqref{eq:mubound} implies that $\mu\leq 2$.
\end{proof}
If $\mu>2$, then $\rho$ must fall into one of a small number of cases.
\begin{lem}\label{lem:mu=3}
If $\mu>2$ then either $T_1=3$ or $T_1=4$. If $T_1=3$, then $\rho$ takes the form
\[
\rho=
\begin{cases}
(\underbrace{3, \dotsc , 3}_{d}, 1, \dotsc, 1)      &\text{if }\tilde{g}=3d\\
(\underbrace{3, \dotsc , 3}_{d},2, 1, \dotsc, 1)    &\text{if }\tilde{g}=3d+1\\
(\underbrace{3, \dotsc , 3}_{d},2,2, 1, \dotsc, 1)  &\text{if }\tilde{g}=3d+2.
\end{cases}
\]
If $T_1=4$, then $\rho$ must take the form
\[\rho=(4,\underbrace{3, \dotsc , 3}_{d}, 1, \dotsc, 1)   \quad\text{where }\tilde{g}=3d+6\]
\end{lem}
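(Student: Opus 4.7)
The plan is to use Lemma~\ref{lem:mubound} to severely restrict the coordinates of $\rho$ and then enumerate the remaining possibilities using the closed-form expression $2\tilde{g} = \sum_i(\rho_i^2 - \rho_i)$ recorded in Lemma~\ref{lem:calcTi}.

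First I invoke Lemma~\ref{lem:mubound}: the hypothesis $\mu > 2$ forces both $\rho_0 \leq 4$ and $\sum_{\rho_i \text{ even}} \rho_i \leq 5$. Since $T_0 = 0$ and $T_1 = \rho_0$, the definition of $\mu$ also gives $\mu \leq \rho_0$, so $\rho_0 \geq 3$. Hence $T_1 = \rho_0 \in \{3, 4\}$, establishing the dichotomy in the statement.

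Next I constrain $\rho$ via a per-coordinate bound on maximisers. For $m < V_0$ and $\alpha \in S_m$ with $\rho \cdot \alpha = T_m$, inequality \eqref{eq:mubound} from the proof of Lemma~\ref{lem:mubound} gives $\rho_l \geq \alpha_l \mu \geq 3 \alpha_l$. Combined with $\rho_l \leq 4$, this immediately yields $\alpha_l \in \{0, 1\}$ for every $l$, and $\alpha_l = 0$ whenever $\rho_l \leq 2$. This is the main technical input; it relies on the telescoping $T_m - T_{m-\alpha_l} \geq \alpha_l \mu$ (valid only when $m < V_0$) that is already worked out inside Lemma~\ref{lem:mubound}. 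In particular, for every $m < V_0$ the maximisers of $\rho \cdot \alpha$ on $S_m$ are characteristic vectors of $m$-element subsets of $\{l : \rho_l \geq 3\}$.

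The case analysis is then routine arithmetic. If $T_1 = 3$, every $\rho_l$ lies in $\{1, 2, 3\}$ and the parity constraint allows at most two $2$s. Writing $d$ for the number of $3$s and $r \in \{0, 1, 2\}$ for the number of $2$s, the identity $2\tilde{g} = \sum_i(\rho_i^2 - \rho_i)$ gives $\tilde{g} = 3d + r$, producing the three sub-cases listed in the lemma. If $T_1 = 4$, the coordinate $\rho_0 = 4$ alone accounts for $4$ of the allowed $5$ units of $\sum_{\rho_i \text{ even}} \rho_i$, so no other coordinate can be even; the remaining $\rho_l$ therefore lie in $\{1, 3\}$. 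With $d$ counting the $3$s, $2\tilde{g} = 12 + 6d$, i.e.\ $\tilde{g} = 3d + 6$, which completes the classification.
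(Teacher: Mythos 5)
Your proof is correct and follows essentially the same route as the paper: apply Lemma~\ref{lem:mubound} in the contrapositive to obtain $\rho_0 \leq 4$ and $\sum_{\rho_i \text{ even}} \rho_i \leq 5$, note $\mu \leq T_1 = \rho_0$ to force $\rho_0 \in \{3,4\}$, and then enumerate using the monotonicity $\rho_l \leq \rho_0$ together with the genus formula $2\tilde g = \sum_i \rho_i(\rho_i - 1)$. The intermediate paragraph on maximisers in $S_m$ (the constraint $\alpha_l \in \{0,1\}$ supported on $\{l : \rho_l \geq 3\}$) is correct but never invoked in the ensuing case analysis — the classification already follows from $\rho_0 \leq 4$, the evenness bound, and the genus formula — so despite being labelled ``the main technical input'' it is dispensable. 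Aside from that, the argument matches the paper's.
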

\begin{proof}
If $\mu>2$, then Lemma~\ref{lem:mubound} and the observation that $\mu \leq T_1=\rho_0$, we must have $\rho_0 \in \{3,4\}$. If $\rho_0=3$, then Lemma~\ref{lem:mubound} implies that we have $\rho_i=2$ for at most two values $i$. If $\rho_0=4$, then Lemma~\ref{lem:mubound} implies that $\rho_i$ is odd for all $i\geq 1$. It is then easy to deduce that $\rho$ must take the required form by using the formula $\tilde{g}=\frac{1}{2}\sum_{i=0}^t \rho_i^2-\rho_i$.
\end{proof}
\begin{rem}
Although it suffices for our purposes, Lemma~\ref{lem:mu=3} does not quite tell the full story. If $\rho=(4,3, \dotsc , 3, 1, \dotsc, 1)$, then one can show that we have $\mu=1$. This shows that the only cases with $\mu>2$ are those given in Lemma~\ref{lem:mu=3} with $\rho_0=3$. For these examples we do have $\mu=3$.
\end{rem}
Now we show that the sequence $(V_i)_{i\geq0}$ determines $\rho$ when $\mu\leq 2$.
\begin{lem}\label{lem:mu<3}
If $\mu \leq 2$, then the vector $\rho$ satisfying \eqref{eq:Vi1} is unique.
\end{lem}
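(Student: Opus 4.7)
The plan is to reconstruct $\rho$ uniquely from the data $(V_i)_{i\geq 0}$, $n$, and $t$ under the hypothesis $\mu \leq 2$. The case $V_0 \leq 1$ is already handled by Lemma~\ref{lem:V0=1}, so I may assume $V_0 \geq 2$, in which case Remark~\ref{rem:rho0rho1} gives $\rho_0 = T_1$ and $\rho_1 = T_2 - T_1$ for free.

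The heart of the argument is the following inductive claim: for each $m$ with $1 \leq m \leq V_0 - 1$, the maximum in Lemma~\ref{lem:calcTi} is realised by the ``greedy'' vector $(1,1,\dotsc,1,0,\dotsc,0) \in S_m$ with $m$ leading ones, so that
\[T_m = \rho_0 + \rho_1 + \dotsb + \rho_{m-1}.\]
This immediately gives $\rho_k = T_{k+1} - T_k$ for $0 \leq k \leq V_0 - 2$, determining the leading coordinates from the data. For the inductive step I suppose instead that some maximiser $\alpha \in S_m$ has a coordinate $\alpha_l \geq 2$, choose $l$ to be the largest such index, and decrement $\alpha_l$ to obtain $\alpha' \in S_{m - \alpha_l}$ with $\alpha' \cdot \rho = \alpha \cdot \rho - \rho_l$. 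The inductive hypothesis applied to $T_{m - \alpha_l}$ combined with the greedy lower bound on $T_m$ forces
\[\rho_l \geq \rho_{m-\alpha_l} + \rho_{m-\alpha_l+1} + \dotsb + \rho_{m-1}.\]
By the maximality of $l$ together with the monotonicity of $\rho$, the $\alpha_l$ summands on the right each dominate $\rho_l$ (or else \eqref{eq:mubound} and $\mu \leq 2$ can be invoked to control $\alpha_l$), and we obtain $\rho_l \geq \alpha_l \rho_l \geq 2\rho_l$, contradicting $\rho_l \geq 1$.

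Once the greedy identity is in place, observe that $\mu = \min_{1 \leq m \leq V_0 - 1}\rho_{m-1} = \rho_{V_0 - 2}$ by monotonicity, so $\mu \leq 2$ forces $\rho_{V_0 - 2} \leq 2$, and hence $\rho_i \in \{1,2\}$ for all $i \geq V_0 - 1$. The residual
\[R := \tilde g - \sum_{i=0}^{V_0-2}\binom{\rho_i}{2},\]
which is determined by $(V_i)$, then equals the number of $2$'s among the tail entries $\rho_{V_0-1}, \dotsc, \rho_t$, and together with the count $t+1$ this pins down the tail entries uniquely (in decreasing order).

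The main obstacle is the greedy-maximiser claim, particularly when $\mu = 1$: then \eqref{eq:mubound} yields only $\rho_l \geq \alpha_l$, which on its own is too weak to rule out non-greedy maximisers. The exchange argument therefore has to rely on the full inductive hypothesis together with monotonicity and a careful choice of $l$, rather than on \eqref{eq:mubound} alone.
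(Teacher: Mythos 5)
Your proposal hinges on a ``greedy-maximiser'' claim---that for all $1 \leq m \leq V_0-1$ the maximum $T_m = \max_{\alpha \in S_m}\rho\cdot\alpha$ is attained by the vector $(1,1,\dotsc,1,0,\dotsc,0)$ with $m$ leading ones, so that $T_m = \rho_0 + \dotsb + \rho_{m-1}$. This claim is \emph{false}, even under the hypothesis $\mu \leq 2$. Take $\rho = (6,1,1,1,1,1,1)$. Then $\tilde g = \tfrac12\sum\rho_i(\rho_i-1) = 15$ and one computes $V_0 = 6$. By Lemma~\ref{lem:calcTi}, for $m < V_0$ one has $T_1 = 6$, $T_2 = 7$, $T_3 = \max(2\rho_0,\ \rho_0+\rho_1+\rho_2) = \max(12,8) = 12$, $T_4 = 13$, $T_5 = 14$, so that $\mu = \min(6,1,5,1,1) = 1 \leq 2$. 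But $T_3 = 12 \neq 8 = \rho_0+\rho_1+\rho_2$: the maximiser in $S_3$ is $(2,0,\dotsc,0)$, not the greedy vector. You half-anticipated trouble (``the main obstacle is the greedy-maximiser claim, particularly when $\mu=1$''), but the issue is not a provable gap---the statement is simply not true. The purported exchange argument also fails at exactly this point: with $m=3$, $l=0$, $\alpha_l=2$ one has $\rho_l = 6 \geq \rho_1+\rho_2 = 2$, which is consistent and yields no contradiction; the ``each summand dominates $\rho_l$'' step is backwards here since $\rho_1 = \rho_2 = 1 < \rho_0$. The downstream identity $\mu = \rho_{V_0-2}$, deduced from the greedy formula, is likewise unfounded (in the example $T_3 - T_2 = 5 \neq \rho_2 = 1$).

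The paper's proof is structured precisely to avoid this trap. It does not assert a closed form for the maximisers. Instead it builds $\rho$ one coordinate at a time: starting from $s^{(0)} = (T_1,0,\dotsc,0)$, it repeatedly finds the \emph{minimal} $t < V_0 - 1$ for which the current partial vector $s^{(l)} = (\rho_0,\dotsc,\rho_l,0,\dotsc,0)$ fails to realise $T_t$, and shows (Claim~1) that $\rho_{l+1} = T_t - T_{t-1}$, using monotonicity of $\rho$ to guarantee the new mass must go into coordinate $l+1$. Crucially, the index $t$ can jump: in the example above the first iteration gives $t=2$ and $\rho_1 = 1$, after which $s^{(1)}=(6,1,0,\dotsc,0)$ already realises $T_3 = 12$ and $T_4 = 13$ via the non-greedy vectors $(2,0,\dots)$ and $(2,1,0,\dots)$, so the process stops there. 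The remaining coordinates are then bounded by $\mu$ (Claim~2) and counted via $T_{V_0} = \tilde g$, as in your residual computation---that last part of your outline is sound and matches the paper. But the crux of the uniqueness argument is exactly what your greedy claim was meant to deliver, and that claim needs to be replaced wholesale by the iterative ``first failure'' construction.
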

\begin{proof}
We will show that can calculate the coefficients of $\rho$ iteratively from the values $T_0<T_1< \dotsb <T_{V_0}=\tilde{g}$.
Using the $T_i$, we will construct a sequence $s^{(0)}, s^{(1)}, \dots, s^{(N)}$, which we will show to satisfy
\[s^{(k)}=(\rho_0,\dotsc, \rho_k,0,\dots,0),\]
for each $k\leq N$.
The integer $N$ will be large enough that $S^{(N)}$ satisfies
\[\max_{\alpha \in S_t} s^{(N)}\cdot \alpha = T_t\]
for all $t<V_0$. We will show we can deduce $\rho_i$ for any $i>N$ by considering $T_{V_0}=\tilde{g}$.
\paragraph{}Start by setting
\[s^{(0)}=(T_1,0, \dotsc, 0)=(\rho_0,0, \dotsc, 0).\]
Now suppose that for $l\geq 0$ we have $s^{(l)}_i=\rho_i$ for all $i\leq l$. Suppose there is $t<V_0-1$ minimal such that $M=\max_{\alpha \in S_t} s^{(l)}\cdot \alpha < T_t$.
\paragraph{Claim 1} We have $\rho_{l+1}=T_{t}-T_{t-1}$.
\begin{proof}[Proof of Claim 1.]
Let $\alpha\in S_{t-1}$ be such that $s^{(l)}\cdot \alpha=T_{t-1}$, Such an $\alpha$ must also satisfy $\rho \cdot \alpha=T_{t-1}$. In particular $\alpha_i=0$ for $i>l$.

Now we consider $\alpha'\in S_t$ defined by
\[\alpha'_i=
\begin{cases}
\alpha_i & i\ne l+1\\
1 &i=l+1.
\end{cases}\]
We have $\alpha'\cdot \rho = T_{t-1}+\rho_{l+1}\leq T_t$. This implies that
\begin{equation}\label{eq:algproof1}
\rho_{l+1}\leq T_t-T_{t-1}.
\end{equation}

Let $\beta \in S_{t}$, be such that $\rho \cdot \beta = T_t$. Since $M<T_t$, we may assume $\beta_{l+1}>0$. Thus we can define $\beta'$ by
\[\beta'_i=
\begin{cases}
\beta_i & i\ne l\\
\beta_i-1 &i=l+1.
\end{cases}\]
We have $\beta'\in S_{t-\beta_l}$. Therefore we obtain
\begin{equation}\label{eq:algproof2}
T_{t-1}\geq T_{t-\beta_l} \geq \rho \cdot \beta' =  T_{t}-\rho_{l+1}.
\end{equation}
Combining \eqref{eq:algproof1} and \eqref{eq:algproof2} that $\rho_{l+1}=T_{t}-T_{t-1}$, as claimed.
\end{proof}
Thus if we define $s^{(l+1)}$ by
\[s^{(l+1)}_i=
\begin{cases}
s^{(l)}_i & i\ne l+1\\
T_{t}-T_{t-1} &i=l+1,
\end{cases}\]
we see that $s^{(l+1)}$ satisfies
\[s^{(l+1)}=(\rho_0,\dotsc, \rho_{l+1},0,\dots,0)\]
and
\[s^{(l+1)}\cdot \alpha'=\max_{\alpha \in S_t} \alpha \cdot s^{(l+1)}=T_t,\]
where $\alpha'\in S_t$ is as defined in the proof of Claim~1.

\paragraph{}Proceeding in this way, we eventually obtain $s^{(N)}$, such that $T_t=\max_{\alpha \in S_t} \alpha \cdot s^{(N)}$, for all $0\leq t <V_0$ and
\[s^{(N)}=(\rho_0,\dotsc, \rho_{N},0,\dots,0).\]
\paragraph{Claim 2} We have $\rho_l\leq \mu\leq 2$, for all $l>N$.
 \begin{proof}[Proof of Claim 2.]Let $\tau<V_0-1$ be such that $T_{\tau+1}-T_{\tau}=\mu$. There is $\alpha \in S_{\tau}$ such that $\alpha \cdot \rho = \alpha \cdot s^{(N)} = T_{\tau}$. Such an $\alpha$ must satisfy $\alpha_l=0$ for $l>N$. Let $\alpha'\in S_{\tau +1}$ be defined by
\[\alpha'_i=
\begin{cases}
\alpha_i & i\ne l\\
1 &i=l.
\end{cases}\]
We have
\[\rho_l=\alpha'\cdot \rho-T_{\tau}\leq T_{\tau+1}-T_{\tau}=\mu\leq 2,\]
as required.
\end{proof}
It remains to determine how many values of $i>N$ satisfy $\rho_i=2$. Since we have the formula $T_{V_0}=\frac{1}{2}\sum_{i=0}^t \rho_i(\rho_i-1)$, we see that there are
\[T_{V_0}-\frac{1}{2}\sum_{i=0}^t s^{(N)}_i(s^{(N)}_i-1)\]
values of $i>N$ with $\rho_i=2$. Since $\rho_i=1$ for all remaining values of $i$, this shows that $\rho$ is determined by the $T_i$.
\end{proof}

The proof of Lemma~\ref{lem:mu<3} combined with Lemma~\ref{lem:V0=1} and Lemma~\ref{lem:mu=3} provides an algorithm for calculating $\rho$. This shows that $\rho$ is the unique vector with $\rho_0\geq \rho_1 \geq \dotsb \geq \rho_t > 0$ and $\norm{\rho}=n$ satisfying \eqref{eq:Vi1}. Moreover, if we take $m$ to be maximal such that $\rho_m>1$, then this algorithm calculates the tuple $(\rho_0, \dots, \rho_m)$ using only the sequence $(V_i)_{i\geq 0}$.

This allows us to deduce Theorem~\ref{thm:Gibbonssouped} and Corollary~\ref{cor:interfromunique} from Theorem~\ref{thm:Gibbons}.
\begin{proof}[Proof of Theorem~\ref{thm:Gibbonssouped}]
Theorem~\ref{thm:Gibbons} shows that the intersection form $Q_X$ takes the form of a $p/q$-changemaker lattice,
\[-Q_X\cong L=\langle w_0, \dotsc, w_l\rangle^\bot \subseteq \mathbb{Z}^{t+s+1},\]
where the sequence $(V_i)_{i\geq 0}$, which is an invariant of $K$, can be calculated from $w_0= \sigma_t f_t+ \dotsb + \rho_1 f_1 +e_0$ by the formula \eqref{eq:CMformula}. Thus, $w_0$ satisfies \eqref{eq:Vi1} and using the algorithm provided by the proof of Lemma~\ref{lem:mu<3}, Lemma~\ref{lem:V0=1} and Lemma~\ref{lem:mu=3}, we see the tuple $(\sigma_m, \dots , \sigma_t)$, where $m$ is minimal such that $\sigma_m >1$, is independent of $t$ and $\norm{w_0}=\lceil \frac{p}{q} \rceil$. By definition, $(\sigma_m, \dots , \sigma_t)$ are the stable coefficients of $L$ and it follows that they are independent of $b_2(X)$ and $p/q$.
\end{proof}
\begin{proof}[Proof of Corollary~\ref{cor:interfromunique}]
This follows combining Theorem~\ref{thm:Gibbonssouped} with Remark~\ref{rem:CMdetermined}. Theorem~\ref{thm:Gibbonssouped} shows that $-Q_X$ and $-Q_{X'}$ are both $p/q$-changemaker lattices with the same stable coefficients. Remark~\ref{rem:CMdetermined} then shows that $Q_{X'} \cong Q_{X} \oplus (-\mathbb{Z}^{k})$. The isomorphism of intersection forms $Q_{X} \oplus (-\mathbb{Z}^{k})\cong Q_{X\#_{k}\overline{\mathbb{CP}}^2}$ is clear.
\end{proof}

\subsection{$L$-space knots}\label{sec:Lspacesurgery}
Now we specialise to the case of $L$-space surgeries. A knot $K$ is said to be an {\em $L$-space knot} if $S^3_{p/q}(K)$ is an $L$-space for some $p/q \in \mathbb{Q}$. The knot Floer homology of an $L$-space knot is known to be determined by its Alexander polynomial, which can be written in the form
\[\Delta_K(t)=a_0 \sum_{i=1}^g a_i(t^i+t^{-i}),\]
where $g=g(K)$, $a_g=1$ and the non-zero values of $a_i$ alternate in sign and assume values in $\{\pm 1\}$ \cite{Ozsvath04genusbounds},\cite{Ozsvath05Lensspace}. Given an Alexander polynomial in this form, we can compute its {\em torsion coefficients} by the formula
\[t_i(K) = \sum_{j\geq 1}ja_{|i|+j}.\]
When $K$ is an $L$-space knot, the $V_i$ appearing in \eqref{eqn:NiWuVonly} satisfy $V_i=t_i(K)$ for $i\geq 0$ \cite{ozsvath2011rationalsurgery}. Thus if $S^3_{p/q}(K)$ is an $L$-space bounding a negative-definite sharp $4$-manifold $X$, then Theorem~\ref{thm:Gibbonssouped} shows that the intersection form is isomorphic to a $p/q$-changemaker lattice $L$, where the stable coefficients, $(\sigma_r, \dotsc, \sigma_m)$, are determined by the torsion coefficients. Since $t_i(K)=0$ if and only if $i\geq g(K)$, Lemma~\ref{lem:calcTi} shows that the genus can be computed by the formula
\begin{equation}\label{eq:calculateg}
g(K)=\frac{1}{2}\sum_{i=m}^{r} \sigma_i(\sigma_i-1),
\end{equation}
which was first proven by Greene \cite[Proposition~3.1]{greene2010space}.
\begin{rem}\label{rem:LspaceTi}
Lemma~\ref{lem:calcTi} shows that $\sigma_{r}$ and $\sigma_{r-1}$ have particularly simple interpretations in terms of torsion coefficients:
\[\sigma_r = \# \{0\leq i< g | t_i(K) =1\} \text{ and } \sigma_{r-1} = \# \{0\leq i< g | t_i(K) =2\}.\]
As in the proof of Lemma~\ref{lem:mu<3}, the remaining stable coefficients can be also be computed from the torsion coefficients. However, the relationship is more complicated.
\end{rem}

\section{Graph lattices and obtuse superbases}\label{sec:graphlattices}
In this section, we gather together some lattice-theoretic concepts and properties that we will need.
\subsection{Graph lattices}
We recall the definition of a graph lattice and state the results that we will require for this paper. All statements in this section can be found with proof in \cite{mccoy2013alternating}.

\paragraph{} Let $G=(V,E)$ be a finite, connected, undirected graph with no self-loops. For a pair of disjoint subsets $R,S \subset V$, let $E(R,S)$ be the set of edges between $R$ and $S$. Define $e(R,S)=|E(R,S)|$. We will use the notation $d(R)=e(R,V\setminus R)$.

\paragraph{} Let $\overline{\Lambda}(G)$ be the free abelian group generated by $v\in V$. Define a symmetric bilinear form on $\overline{\Lambda}(G)$ by
\[
v\cdot w =
  \begin{cases}
   d(v)            & \text{if } v=w \\
   -e(v,w)       & \text{if } v\ne w.
  \end{cases}
\]
In this section we will use the notation $[R]=\sum_{v\in R}v$, for $R\subseteq V$. The above definition gives
\begin{equation}\label{eq:vdotsubgraph}
v\cdot [R] =
  \begin{cases}
   -e(v,R)            & \text{if } v\notin R \\
   e(v,V\setminus R)       & \text{if } v\in R.
  \end{cases}
\end{equation}

From this it follows that $[V]\cdot x= 0$ for all $x \in \overline{\Lambda}(G)$. We define the {\em graph lattice} of $G$ to be
\[\Lambda(G):= \frac{\overline{\Lambda}(G)}{\mathbb{Z}[V]}.\]
The bilinear form on $\overline{\Lambda}(G)$ descends to $\Lambda(G)$. Since we have assumed that $G$ is connected, the pairing on $\Lambda(G)$ is positive-definite. This makes $\Lambda(G)$ into an integral lattice. Henceforth, we will abuse notation by using $v$ to denote its image in $\Lambda(G)$.

%The following lemma will be useful.
%\begin{lem}\label{lem:usefulbound}
%Let $x=[R]$ be a sum of vertices, then for any $z\in \Lambda(G)$, we have
%\[(x-z)\cdot z\leq 0.\]
%\end{lem}

Recall that a vector $z$ in a lattice is {\em irreducible} if it cannot be written in the form $z=x+y$ for non-zero $x$ and $y$ with $x\cdot y \geq 0$. The irreducible vectors in $\Lambda(G)$ can be characterised in terms of the graph $G$.

\begin{lem}\label{lem:irreducible}
The vector $x \in \Lambda(G)\setminus \{0\}$ is irreducible if and only if $x=[R]$ for some $R\subseteq V$ such that $R$ and $V\setminus R$ induce connected subgraphs of $G$.\qed
\end{lem}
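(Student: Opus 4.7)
The plan is to work in the cover $\overline{\Lambda}(G)$ and exploit the ``Laplacian'' edge identity
\[
\tilde{y} \cdot \tilde{z} = \sum_{\{v,w\} \in E} (a_v - a_w)(b_v - b_w),
\]
valid for any lifts $\tilde{y} = \sum_v a_v v$, $\tilde{z} = \sum_v b_v v$; since $[V]$ lies in the radical of the form on $\overline{\Lambda}(G)$, the right-hand side depends only on the classes $y,z \in \Lambda(G)$. Both implications reduce to inspecting this sum.

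\emph{Forward direction} (contrapositive). I will first show that if $x\in\Lambda(G)$ is not of the form $[R]$ for any $\emptyset \ne R \subsetneq V$, then $x$ is reducible. Pick a lift $\tilde{x}$ with nonnegative integer coefficients $c_v$ and minimum $0$ (achievable by subtracting a multiple of $[V]$), and let $M = \max_v c_v$. Using the level-set decomposition $\tilde{x} = \sum_{k=1}^M [A_k]$ with $A_k = \{v : c_v \geq k\}$, together with the identity $[A_1]\cdot [A_k] = e(A_k, V\setminus A_1) \geq 0$ (a quick consequence of \eqref{eq:vdotsubgraph}), the splitting $y = [A_1]$, $z = \sum_{k \geq 2}[A_k]$ witnesses reducibility whenever $M \geq 2$. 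Hence $M = 1$ and $x = [R]$. Next, if the induced subgraph on $R$ is disconnected, partition it as $R = R' \sqcup R''$ according to any two nonempty unions of components; then $[R] = [R']+[R'']$ and $[R']\cdot[R''] = -e(R',R'') = 0$, so $x$ is reducible. Applying the same argument to $[V\setminus R] = -[R]$ rules out disconnectedness of $V\setminus R$.

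\emph{Backward direction.} Assume $R$ and $V\setminus R$ are both connected, and suppose for contradiction $[R] = y + z$ with $y,z\neq 0$ and $y\cdot z \geq 0$. Choose lifts satisfying $\tilde{y} + \tilde{z} = [R]$ in $\overline{\Lambda}(G)$, set $a_v = \tilde{y}_v$, $b_v = \chi_R(v) - a_v$, and substitute into the edge identity. Each edge within $R$ or within $V\setminus R$ contributes $-(a_v - a_w)^2 \leq 0$, and each crossing edge (with $v\in R$, $w\notin R$) contributes $-(a_v - a_w)(a_v - a_w - 1) \leq 0$. Therefore $y\cdot z \leq 0$, and equality forces $a$ to be constant on $R$ and on $V\setminus R$ (using connectedness of the induced subgraphs) with the two constants differing by $0$ or $1$ across cut edges; these two possibilities correspond to $\tilde{y} = \alpha[V]$ and $\tilde{y} = \alpha[V]+[R]$ respectively, each of which forces $y = 0$ or $z = 0$, contradicting our assumption. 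The main obstacle is the crossing-edge sign analysis and the extraction of the single-step constraint from the equality case: once that is in place, the level-set decomposition and the disconnected-$R$ splittings are essentially bookkeeping.
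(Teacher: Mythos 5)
Your argument is correct. Both implications are handled cleanly: the level-set decomposition $\tilde{x}=\sum_{k=1}^{M}[A_k]$ together with $[A_1]\cdot[A_k]=e(A_k,V\setminus A_1)\geq 0$ forces an irreducible class to have a $\{0,1\}$-valued lift, the $[R]=[R']+[R'']$ splitting handles disconnected $R$ (and $-[R]=[V\setminus R]$ handles disconnected complement), and in the converse the Laplacian quadratic form gives $y\cdot z\leq 0$ with the equality analysis (constancy of $a$ on each side, crossing differences in $\{0,1\}$) forcing $y$ or $z$ to vanish. The paper itself defers to \cite{mccoy2013alternating} for this lemma, but the level-set/cut argument you give is the standard proof of this fact in the graph-lattice literature and is essentially the same strategy used there.
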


A connected graph is said to be {\em 2-connected} if it cannot be disconnected by deleting a vertex. This property is equivalent to $\Lambda(G)$ being {\em indecomposable}, that is, $\Lambda(G)$ cannot be written as the orthogonal direct sum $\Lambda(G)= L_1 \oplus L_2$ with $L_1,L_2$ non-zero sublattices.
\begin{lem}\label{lem:2connectgraphlat}
The following are equivalent:
\begin{enumerate}[(i)]
\item The graph $G$ is 2-connected;
\item Every vertex $v\in V$ is irreducible;
\item The lattice $\Lambda(G)$ is indecomposable.
\end{enumerate}\qed
\end{lem}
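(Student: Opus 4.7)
The plan is to close the loop (i) $\Rightarrow$ (ii) $\Rightarrow$ (iii) $\Rightarrow$ (i), using Lemma~\ref{lem:irreducible} to translate between combinatorial and lattice-theoretic language.

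For (i) $\Rightarrow$ (ii), I would apply Lemma~\ref{lem:irreducible} to the singleton subset $R=\{v\}$: since $\{v\}$ always induces a connected subgraph, $v$ is irreducible in $\Lambda(G)$ if and only if $V\setminus\{v\}$ induces a connected subgraph, which is precisely the statement that $v$ is not a cut vertex. Assuming $G$ is $2$-connected, no vertex is a cut vertex, and every $v$ is therefore irreducible.

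For (ii) $\Rightarrow$ (iii), the key observation is that an irreducible vector cannot split nontrivially across an orthogonal decomposition: if $\Lambda(G)=L_1\oplus L_2$ orthogonally and $x=x_1+x_2$ with $x_i\in L_i$, then $x_1\cdot x_2=0$, forcing $x_1=0$ or $x_2=0$ by irreducibility. Applied to every vertex, this partitions $V=V_1\sqcup V_2$ according to which summand contains it. For $v_1\in V_1$ and $v_2\in V_2$ we then have $-e(v_1,v_2)=v_1\cdot v_2=0$, so no edges run between $V_1$ and $V_2$. Since $G$ is connected, one of the $V_i$ must be empty, so one of the $L_i$ contains all vertices and hence all of $\Lambda(G)$; the other summand is $0$, and $\Lambda(G)$ is indecomposable.

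For (iii) $\Rightarrow$ (i) I argue contrapositively: suppose $G$ is not $2$-connected, so there is a cut vertex $v$ and a partition $V\setminus\{v\}=A\sqcup B$ with $A,B$ nonempty and $e(A,B)=0$. Let $L_1$ and $L_2$ be the sublattices of $\Lambda(G)$ generated by the images of $A$ and $B$, respectively. Orthogonality is immediate from $a\cdot b=-e(a,b)=0$ for $a\in A$, $b\in B$. To see $\Lambda(G)=L_1\oplus L_2$, note that modulo $[V]$ we have $v\equiv-[A]-[B]$, so $A\cup B$ spans $\Lambda(G)$; a rank count gives $|A|+|B|=|V|-1=\rk\Lambda(G)$, and linear independence of $A\cup B$ in $\Lambda(G)$ follows by reading off the $v$-coefficient of any relation pulled back to $\overline{\Lambda}(G)$. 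Thus $\Lambda(G)$ decomposes, contradicting (iii).

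I do not expect any serious obstacle: the crux is the rank/independence check in the last step, which is routine once one remembers to track what $v$ becomes in the quotient. The rest is a direct application of Lemma~\ref{lem:irreducible} and the elementary fact about irreducible vectors versus orthogonal splittings.
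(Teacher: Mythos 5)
The paper does not prove this lemma itself; it defers all of Section~\ref{sec:graphlattices} to the reference \cite{mccoy2013alternating}, so there is no in-paper proof to compare against. That said, your argument is correct and self-contained given the tools the paper does set up. Applying Lemma~\ref{lem:irreducible} to $R=\{v\}$ for (i)~$\Rightarrow$~(ii) is exactly the right reduction; the splitting argument for (ii)~$\Rightarrow$~(iii) correctly uses the orthogonality in the definition of indecomposable together with connectedness of $G$ to force one summand to absorb all vertices (and hence all of $\Lambda(G)$, since $\{v\}$ generate it); and for (iii)~$\Rightarrow$~(i) the contrapositive construction is sound, with the independence check handled properly by pulling a relation back to $\overline{\Lambda}(G)$ and reading off the coefficient of the cut vertex. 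This is the natural route and almost certainly mirrors the cited source.
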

Given a graph lattice of some graph $G$, the following lemma will be useful for identifying other graphs  with isomorphic graph lattices.
\begin{lem}\label{lem:cutedge}
Suppose that $G$ is 2-connected. Let $v$ be a vertex such that we can find $x,y\in \Lambda(G)$, with $v=x+y$ and $x\cdot y=-1$. Then there is a cut edge $e$ in $G\setminus \{v\}$ and if $R,S$ are the vertices of the two components of $(G\setminus \{v\})\setminus\{e\}$ then $\{x,y\}=\{[R]+v,[S]+v\}$. Let $u_1$ and $u_2$ be the endpoints of $e$. These are the unique vertices $u_1,u_2\ne v$, with $x\cdot u_1=y\cdot u_2=1$. Furthermore, any vertex $w \notin \{v,u_1,u_2\}$ satisfies $w\cdot x,w\cdot y\leq 0$. \qed
\end{lem}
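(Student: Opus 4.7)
The plan is to lift the relation $v = x + y$ from $\Lambda(G) = \overline{\Lambda}(G)/\mathbb{Z}[V]$ to $\overline{\Lambda}(G)$, where the bilinear form admits a clean edge-by-edge expansion. Choose representatives $\tilde x = \sum_u a_u u$ and $\tilde y = \sum_u b_u u$ in $\overline{\Lambda}(G)$ with $\tilde x + \tilde y = v$, so $a_u + b_u = \delta_{u,v}$. Since $[V]$ is null in the pairing, I may replace $\tilde x$ by $\tilde x + c[V]$ and $\tilde y$ by $\tilde y - c[V]$ without disturbing anything, and I will normalize so that $a_v = 0$, forcing $b_v = 1$ and $b_u = -a_u$ for $u \neq v$. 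A direct computation using the definition of the pairing gives
\[\tilde x \cdot \tilde y = \sum_{\{u,w\} \in E} (a_u - a_w)(b_u - b_w),\]
and after substituting the normalized values each edge $\{u,w\}$ with $u, w \neq v$ contributes $-(a_u - a_w)^2$ while each edge $\{v, w\}$ contributes $-a_w(1 + a_w)$.

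The assumption $\tilde x \cdot \tilde y = -1$ therefore translates into the integer identity
\[\sum_{\substack{\{u,w\} \in E \\ u,w \neq v}} (a_u - a_w)^2 \;+\; \sum_{\{v, w\} \in E} a_w(1 + a_w) = 1.\]
Every term in the first sum is a non-negative perfect square, and every term $a_w(1+a_w)$ in the second is either $0$ (when $a_w \in \{0,-1\}$) or at least $2$. Thus the only way to total $1$ is for $a_w \in \{0,-1\}$ at every neighbour of $v$, together with exactly one edge $e = \{u_0, u_0'\}$ in $G \setminus \{v\}$ having $|a_{u_0} - a_{u_0'}| = 1$ and $a$ constant along every other edge of $G \setminus \{v\}$. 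Since $G$ is $2$-connected the subgraph $G \setminus \{v\}$ is connected, so constancy of $a$ on every edge of $(G\setminus\{v\})\setminus\{e\}$ together with $a_{u_0} \neq a_{u_0'}$ forces $e$ to be a cut edge. Letting $R \ni u_0$ and $S \ni u_0'$ be the two resulting components, labelled so that $a = 0$ on $R$ and $a = -1$ on $S$ (a labelling consistent with the neighbour condition at $v$), one obtains $\tilde x = -[S]$, and using $[V] = 0$ in $\Lambda(G)$ yields $x = [R] + v$ and $y = [S] + v$.

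For the remaining assertions I would apply \eqref{eq:vdotsubgraph}. For $u \in R$ a short computation gives $u \cdot x = e(u, S)$, which equals $1$ exactly when $u = u_0$ (the unique endpoint of $e$ in $R$) and $0$ otherwise; for $u \in S$ one gets $u \cdot x = -e(u, R) - e(u, v) \leq 0$. This identifies $u_1 = u_0$ as the unique vertex of $V\setminus\{v\}$ with $u_1 \cdot x = 1$, and shows $w \cdot x \leq 0$ for $w \notin \{v, u_0, u_0'\}$; the symmetric computation produces $u_2 = u_0'$ and the bound $w \cdot y \leq 0$. The only genuinely delicate step is the arithmetic rigidity after normalization: the key observation is that $a_w(1+a_w) \geq 2$ for every integer $a_w \notin \{0,-1\}$, which kills all neighbour-of-$v$ contributions and allows the remaining unit of the sum to pinpoint both the cut edge and the partition $(R, S)$.
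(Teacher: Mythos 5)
Your overall strategy is sound: lifting $v = x + y$ to $\overline{\Lambda}(G)$ with the normalization $a_v = 0$, expanding the pairing edge-by-edge, and then exploiting the arithmetic rigidity of $\sum(a_u - a_w)^2 + \sum a_w(1+a_w) = 1$ over the integers is a clean and direct way to locate the cut edge. I verified the edge-sum identity $\tilde x \cdot \tilde y = \sum_{\{u,w\}\in E}(a_u-a_w)(b_u-b_w)$ and the two substituted forms of the summands, and the deduction that all $a$ on $v$-neighbours lie in $\{0,-1\}$, all edges of $G\setminus\{v\}$ except one have equal $a$-values at their ends, and that single exceptional edge has $|a_{u_0}-a_{u_0'}|=1$ is correct. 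The final computations via \eqref{eq:vdotsubgraph} identifying $u_1 = u_0$, $u_2 = u_0'$ uniquely and giving $w\cdot x, w\cdot y \leq 0$ for all other $w \neq v$ are also correct. (The paper itself does not include a proof of this lemma but cites \cite{mccoy2013alternating}, so I cannot compare methods directly.)

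There is, however, one genuine gap. You write ``labelled so that $a = 0$ on $R$ and $a = -1$ on $S$ (a labelling consistent with the neighbour condition at $v$),'' and later conclude $\tilde x = -[S]$, hence $x = [R]+v$. But the argument so far only shows that $a$ takes two consecutive integer values on $R$ and $S$, and that at every neighbour of $v$ the value is $0$ or $-1$. If, say, $S$ contained no neighbour of $v$ at all, the constraint would not touch $a|_S$, and $(a|_R, a|_S)$ could be $(0,1)$, giving $\tilde x = [S]$, hence $x = -([R]+v)$ rather than $[R]+v$ --- the stated conclusion would fail. So you must rule this out. The missing step is another application of $2$-connectivity: if, say, $S$ had no neighbour of $v$, then the only edge from $S$ to its complement in $G$ would be $e$ itself, so deleting the single vertex $u_0$ from $G$ would disconnect $S$ from $v$, contradicting $2$-connectivity of $G$. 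Hence both $R$ and $S$ contain a neighbour of $v$, forcing $\{a|_R, a|_S\} = \{0,-1\}$, and your choice of labels is then genuinely forced rather than merely chosen. With that observation supplied, the proof is complete.
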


\subsection{Obtuse superbases}\label{sec:vertexsuperbases}
Given a positive definite integral lattice $L$ of rank $r$, we say that $L$ {\em admits an obtuse superbase} if it contains a set $B=\{v_0, \dotsc, v_r\}$, such that $v_1,\dotsc, v_r$ form a basis for $L$, $v_0+ \dotsb + v_r=0$ and $v_i\cdot v_j \leq 0$ for all $0\leq i\ne j\leq r$. We will call the set $B$ a {\em an obtuse superbase} for $L$. This terminology is taken from the work of Conway and Sloane \cite{Conway92lowdimlattices6}.

Given an obtuse superbase $B=\{v_0, \dotsc, v_r\}$ for $L$, we can construct a graph $G_B$ by taking vertex set $B$ with $|v_i\cdot v_j|$ edges between vertices $v_i$ and $v_j$ for $i\ne j$. With this construction in mind, we will frequently refer to elements of a given obtuse superbase as vertices of $L$.

\begin{prop}\label{prop:graphsuperbase}
The graph $G_B$ is connected and $L$ is isomorphic to $\Lambda(G_B)$.
\end{prop}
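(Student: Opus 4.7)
The plan is to prove the two assertions in sequence. First I would establish connectedness of $G_B$ by contradiction. Suppose $B$ admits a partition $B = B_1 \sqcup B_2$ into two non-empty parts with no edges between them in $G_B$; equivalently, $v_i \cdot v_j = 0$ for all $v_i \in B_1$ and $v_j \in B_2$. Setting $x = \sum_{v \in B_1} v$ and $y = \sum_{v \in B_2} v$, the vanishing of all cross pairings gives $x \cdot y = 0$, while the superbase relation $v_0 + \dotsb + v_r = 0$ gives $x + y = 0$. Together these force $\norm{x} = -x \cdot y = 0$, so $x = 0$ by positive-definiteness of $L$. Assuming without loss of generality that $v_0 \in B_1$, combining $x = 0$ with $v_0 + \dotsb + v_r = 0$ yields $\sum_{v \in B_2} v = 0$, which contradicts the linear independence of $\{v_1, \dotsc, v_r\}$ unless $B_2 = \emptyset$.

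Next I would construct the map $\Phi \colon \overline{\Lambda}(G_B) \to L$ sending the generator corresponding to the vertex $v_i$ to the lattice vector $v_i \in L$. Since $\Phi([B]) = v_0 + \dotsb + v_r = 0$ by the superbase relation, $\Phi$ descends to a map $\Lambda(G_B) \to L$. The key computation is that $\Phi$ preserves the bilinear form on vertices. For $i \neq j$, the obtuse condition $v_i \cdot v_j \leq 0$ in $L$ means the number of edges in $G_B$ is $e(v_i,v_j) = |v_i \cdot v_j| = -v_i\cdot v_j$, so the graph-lattice pairing of vertices equals $-e(v_i,v_j) = v_i\cdot v_j$, matching $L$. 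For the diagonal, applying $v_i \cdot (v_0+\dotsb+v_r)=0$ gives $\norm{v_i} = -\sum_{j\neq i} v_i\cdot v_j = \sum_{j\neq i} e(v_i,v_j) = d(v_i)$, again matching the graph-lattice form. Finally, since $\{v_1,\dotsc,v_r\}$ is a basis for $L$ and $\Lambda(G_B)$ is free of rank $r$ (using the connectedness established above), $\Phi$ is a surjection between free abelian groups of equal rank, hence an isomorphism of lattices.

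The only step requiring any real thought is the connectedness argument, which has to leverage the combination of positive-definiteness and the superbase relation to produce a contradiction. Once connectedness is in hand, the isomorphism is a direct verification driven by the sign convention that makes $e(v_i,v_j) = -v_i \cdot v_j$ align exactly with the definition of $\Lambda(G)$; no further technical obstacle is expected.
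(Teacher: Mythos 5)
Your proof is correct and follows essentially the same route as the paper: both arguments establish connectedness by pairing the superbase relation $v_0+\dotsb+v_r=0$ against positive-definiteness to show that a union of components summing to zero forces triviality, and both verify the isomorphism by the same computation that $e(v_i,v_j)=-v_i\cdot v_j$ off-diagonal and $d(v_i)=\lVert v_i\rVert^2$ on the diagonal via the superbase relation. The only cosmetic difference is that the paper works with a single connected component $R$ and shows $[R]\cdot v_i=0$ for all $i$, whereas you take the complementary pair $(B_1,B_2)$ and derive $\lVert x\rVert^2=0$ directly from $x+y=0$ and $x\cdot y=0$; these are the same idea.
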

\begin{proof}
First we show that $G_B$ is connected. Let $R\subseteq B$ be the vertices of a non-empty connected component of $G_B$. We see that the vector $[R]=\sum_{x\in R}x$ satisfies $[R]\cdot v_i=0$ for all $0\leq i\leq r$ (cf. \eqref{eq:vdotsubgraph}). Since $L$ is positive-definite, this implies that $[R]=0$. By definition, $v_1,\dotsc, v_r$ must be linearly independent. It follows that $R=B$ and hence $G_B$ is connected, as required.

To show that $\Lambda(G_B)$ is isomorphic to $L$, take the linear map which takes vertices to the corresponding vectors in $L$. Since $v_0+ \dotsb + v_r=0$, we have
\[d(v_k)=-\sum_{i\ne k}v_k\cdot v_i=\norm{v_k},\]
and by construction we have $e(v_i,v_j)= - v_i\cdot v_j$, for $i\ne j$. This shows that this map is the required isomorphism.
\end{proof}

For any given lattice there may be many choices of obtuse superbase. The following lemma shows one way to convert one obtuse superbase into another.
\begin{lem}\label{lem:superbasemod}
Let $L$ be an indecomposable lattice with an obtuse superbase $B$. Suppose that we have $v \in B$ which can be written as $v=x+y$, where $x,y\in L$ and $x\cdot y=-1$. There are unique $u_1,u_2 \in B$ with $u_1\cdot x>0$ and $u_2\cdot y>0$ and the set $B'=(B\setminus\{v,u_1,u_2\})\cup \{x,y,u_1+u_2\}$ is also an obtuse superbase for $L$.
\end{lem}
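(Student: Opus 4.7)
The plan is to work with the identification $L \cong \Lambda(G_B)$ given by Proposition~\ref{prop:graphsuperbase} and to import the structural information about the decomposition $v = x + y$ from Lemma~\ref{lem:cutedge}. Since $L$ is indecomposable, Lemma~\ref{lem:2connectgraphlat} tells us that $G_B$ is $2$-connected, so Lemma~\ref{lem:cutedge} applies and produces a partition $V(G_B) = \{v\} \cup R \cup S$ with $\{x, y\} = \{[R]+v,\, [S]+v\}$; after relabelling, take $x = [R]+v$ and $y = [S]+v$. It also furnishes unique vertices $u_1 \in R$ and $u_2 \in S$ (distinct from $v$) with $x \cdot u_1 = y \cdot u_2 = 1$, and guarantees $w \cdot x,\, w \cdot y \leq 0$ for every $w \in B \setminus \{v, u_1, u_2\}$. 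Since $[V(G_B)] = 0$ in $\Lambda(G_B)$, I will also use the equivalent identities $[R] = -y$ and $[S] = -x$.

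With these data in hand, I would verify in turn the three defining properties of an obtuse superbase for $B' = (B \setminus \{v, u_1, u_2\}) \cup \{x,\, y,\, u_1 + u_2\}$. First, $\sum_{w \in B'} w = \sum_{w \in B} w + (x + y - v) = 0$. Next, to see that $B'$ generates $L$, the identities above give $u_1 = -y - [R \setminus \{u_1\}]$ and $u_2 = -x - [S \setminus \{u_2\}]$, while $v = x + y$; so every element of $B$ lies in the $\mathbb{Z}$-span of $B'$. Distinctness of the elements of $B'$ follows from the fact that any coincidence would produce a nontrivial $\mathbb{Z}$-relation on the elements of $B$ whose coefficients are not proportional to those of $\sum_{w \in B} w = 0$, contradicting the fact that any $r$-element subset of $B$ is a basis for $L$. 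Combined with $\sum B' = 0$, this forces any $r$-element subset of $B'$ to be a basis for $L$.

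Checking the pairing inequalities is the longest step. Pairings between two elements of $B \setminus \{v, u_1, u_2\}$ are inherited from $B$, as is $(u_1 + u_2) \cdot w \leq 0$ for any such $w$. For the pairings involving $x$ (and symmetrically $y$) I would apply \eqref{eq:vdotsubgraph}, using $x = [R]+v$ when $w \in R$ and $x = -[S]$ when $w \in S$; exploiting the fact that $u_1 u_2$ is the only edge between $R$ and $S$ in $G_B \setminus \{v\}$, this yields $x \cdot u_1 = 1$, $x \cdot u_2 = -1 - e(v, u_2)$, $x \cdot w = 0$ for $w \in R \setminus \{u_1\}$, and $x \cdot w = -e(v, w) \leq 0$ for $w \in S \setminus \{u_2\}$. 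Finally $x \cdot y = -1$ and $x \cdot (u_1 + u_2) = 1 + x \cdot u_2 = -e(v, u_2) \leq 0$, with symmetric statements for $y$. The main subtlety I anticipate is precisely this cancellation in $x \cdot (u_1 + u_2)$ and $y \cdot (u_1 + u_2)$: the $+1$ from the privileged vertex must be offset by a $-1$ contributed by the cut edge in order to produce a non-positive pairing, and it is this cancellation that makes the definition of $B'$ work.
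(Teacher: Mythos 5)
Your proof is correct and follows essentially the same route as the paper's: reduce to $2$-connectedness via Lemma~\ref{lem:2connectgraphlat}, invoke Lemma~\ref{lem:cutedge} to get the cut-edge decomposition with $\{x,y\}=\{[R]+v,[S]+v\}$ and the distinguished vertices $u_1,u_2$, and then check the obtuse superbase axioms directly. The paper dismisses the final verification as ``straightforward'' and points to a figure; you simply carry it out, and your computations (in particular $x\cdot(u_1+u_2)=-e(v,u_2)\leq 0$ and the spanning/distinctness arguments) are correct.
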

\begin{proof}
Since $L$ is indecomposable, Lemma~\ref{lem:2connectgraphlat} shows that the graph $G_B$ is 2-connected. Thus we may apply Lemma~\ref{lem:cutedge}, which shows that there are disjoint connected subgraphs $G_1$ and $G_2$ of $G_B$ and vertices $u_1$ and $u_2$, such that $x=v+u_1+ \sum_{z\in G_1}z$ and $y=v+u_2+ \sum_{z\in G_2}$, with a unique edge between $u_1$ and $u_2$ which is a cut-edge in $G_B \setminus \{v\}$. It is straight-forward to verify that $B'=(B\setminus\{v,u_1,u_2\})\cup \{x,y,u_1+u_2\}$ is an obtuse superbase for $L$. The an illustration of how the graph $G_{B'}$ is obtained from $G_B$ is given in Figure~\ref{fig:sbtransform}.
\begin{figure}
  \centering
  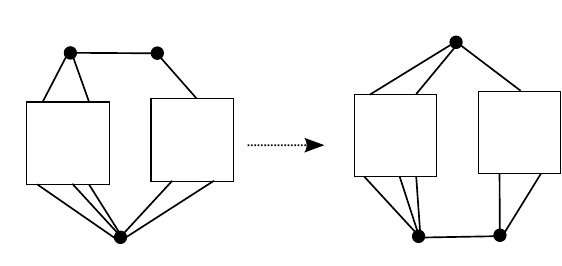
 \caption{The graphs $G_B$ and $G_{B'}$ corresponding to the obtuse superbases appearing in Lemma~\ref{lem:superbasemod}.}
 \label{fig:sbtransform}
\end{figure}
\end{proof}

\section{Alternating surgeries}\label{sec:mainresults}
In this section, we will prove our main results.
\subsection{The Goeritz form}
A diagram $D$ of a link $L$ divides the plane into connected regions. We may colour these regions black and white in a chessboard fashion. This colouring can be done in two different ways. Each of the possible colourings gives an incidence number, $\mu(c)\in \{\pm 1\}$, at each crossing $c$ of $D$, as shown in Figure~\ref{fig:incidencenumber}.
 \begin{figure}[h]
  \centering
  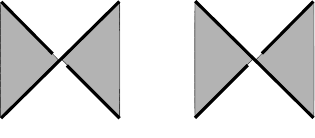
 \caption{The incidence number of a crossing.}
 \label{fig:incidencenumber}
\end{figure}
 We construct a planar graph, $\Gamma_D$, by drawing a vertex in each white region and an edge $e$ for every crossing $c$ between the two white regions it joins. We define an incidence number on each edge  by $\mu(e)=\mu(c)$. We call this the {\em white graph} corresponding to $D$. This gives rise to a {\em Goeritz matrix}, $G_D=(G_{ij})$, defined by labeling the vertices of $\Gamma_D$, by $v_1,\dotsc , v_{r+1}$ and for $1\leq i,j \leq r$, setting
 \[g_{ij}=\sum_{e \in E(v_i,v_j)}\mu(e)\]
 for $i\ne j$ and
 \[g_{ii} = - \sum_{e \in E(v_i, \Gamma_D\setminus v_i)}\mu(e)\]
 otherwise \cite[Chapter 9]{lickorish1997introduction}.

\paragraph{} Now suppose that $L$ is an alternating, non-split link. If $D$ is any alternating diagram, then we may fix the colouring so that $\mu(c)=-1$ for all crossings. In this case, $G_D$ defines a positive-definite bilinear form. This in turn gives a lattice, $\Lambda_D$ which we will refer to as the {\em white lattice} of $D$. Observe that if $D$ is reduced (i.e. contains no nugatory crossings), then $\Gamma_D$ contains no self-loops or cut-edges and $\Lambda_D$ is isomorphic to the graph lattice $\Lambda(\Gamma_D)$.

\paragraph{}Ozsv{\'a}th and Szab{\'o} have shown that the Heegaard Floer homology $d$-invariants of the branched double cover $\Sigma(L)$ are determined by $\Lambda_D$ \cite{ozsvath2005heegaard}.
\begin{thm}[\cite{ozsvath2005heegaard}]\label{thm:altboundssharp}
Let $L$ be a non-split alternating link with a reduced alternating diagram $D$. The double branched cover $\Sigma(L)$ is an $L$-space which bounds a simply-connected negative-definite sharp 4-manifold with intersection form isomorphic to $-\Lambda_D$.\qed
\end{thm}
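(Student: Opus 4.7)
The plan is to build an explicit negative-definite 4-manifold $X_D$ directly from the reduced alternating diagram $D$ and then verify each of the listed properties in turn.

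First I would construct $X_D$ as a plumbed 4-manifold associated to the white graph $\Gamma_D$. With the colouring convention giving $\mu(c)=-1$ at every crossing, the Goeritz matrix $G_D$ coincides (after a sign) with the graph Laplacian of $\Gamma_D$ modulo the all-ones vector, so the intersection form of the plumbing is exactly $-\Lambda_D$. Standard handle calculus (as in Gordon--Litherland or Akbulut--Kirby) shows that the boundary of this plumbing is the double cover of $S^3$ branched over $L$, i.e.\ $\Sigma(L)$. Because $L$ is non-split and $D$ is reduced, $\Gamma_D$ is connected with no self-loops or cut-edges, which ensures both that $X_D$ is simply connected (its handle attaching link lives in a connected plumbing tree) and that $\Lambda_D$ is positive-definite (so $-\Lambda_D$ is negative-definite). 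This gives the whole topological half of the statement at once.

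Next I would establish that $\Sigma(L)$ is an $L$-space. The clean way is to recall that every non-split alternating link is quasi-alternating (one can unravel the diagram at any crossing where the two resolutions have strictly smaller determinant), and that Ozsv{\'a}th--Szab{\'o} proved, by induction on the determinant using the unoriented skein exact triangle for $\widehat{HF}$ of branched double covers, that the branched double cover of any quasi-alternating link is an $L$-space. At this point I would have a simply connected negative-definite 4-manifold $X_D$ with intersection form $-\Lambda_D$ bounding the $L$-space $\Sigma(L)$.

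The only thing remaining---and this is where the real work lives---is sharpness: for every $\spinct\in\spinc(\Sigma(L))$ I need $\spincs\in\spinc(X_D)$ restricting to $\spinct$ with $c_1(\spincs)^2+b_2(X_D)=4d(\Sigma(L),\spinct)$. Via the canonical identification $\spinc(\Sigma(L))\cong\Char(\Lambda_D)/2\Lambda_D$, the right-hand side of \eqref{eq:sharpdef} becomes the lattice quantity
\[
-\min_{\xi\in\Char(\Lambda_D),\ [\xi]=\spinct}\frac{\xi^2+\rk\Lambda_D}{4},
\]
so sharpness is equivalent to an explicit formula for the $d$-invariants of $\Sigma(L)$ in terms of short characteristic covectors of the Goeritz form. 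I would prove this formula by induction on the number of crossings of $D$ using the unoriented skein triangle: at a crossing, the two resolutions yield diagrams $D_0,D_1$ with smaller Goeritz lattices related to $\Lambda_D$ by a rank-one surgery, and the absolute $\mathbb{Q}$-gradings in the triangle shift in a way that matches the lattice-theoretic change. The base case (the unknot or a standard Hopf link) is a direct check.

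The main obstacle is this last step. Writing down $X_D$ is elementary, but pinning down $d(\Sigma(L),\spinct)$ precisely as the minimum of $(\xi^2+\rk)/4$ requires the full Heegaard Floer machinery for alternating links---tracking absolute gradings through the skein triangle, verifying that the minimiser exists in the correct $\spinc$-class, and controlling the induction step when crossings are resolved. Everything else is formal once this identification is in hand.
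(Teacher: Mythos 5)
The paper does not prove this statement; it is quoted directly from Ozsv\'ath--Szab\'o, so there is no in-paper proof to compare against. Your sketch does capture the structure of the cited argument: construct the 4-manifold from the diagram, identify $\Sigma(L)$ as an $L$-space via the skein exact triangle and quasi-alternating induction, and then establish the $d$-invariant formula (equivalently sharpness) by an absolute-grading analysis of the same triangle, where you correctly locate the real work.

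One small but worth-flagging imprecision: the 4-manifold is not literally a plumbing on $\Gamma_D$. The white graph of an alternating diagram generally has multi-edges and cycles, and the manifold is most cleanly described as the double cover of $B^4$ branched over the pushed-in black checkerboard surface of $D$ (Gordon--Litherland); equivalently, it is $B^4$ with $2$-handles attached along a framed unlink read off from $\Gamma_D$. That description gives simple connectivity at once and identifies the intersection form with the Goeritz form $-\Lambda_D$, which is your intended conclusion. Also, when invoking quasi-alternating links, note that the recursion requires the determinant additivity $\det L = \det L_0 + \det L_1$ together with both resolutions lying in the QA class, not merely that both resolutions have smaller determinant; for a crossing of a reduced alternating diagram these conditions are indeed satisfied, so the reduction goes through, but the determinant-additivity should be stated. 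With these adjustments the outline is faithful to the Ozsv\'ath--Szab\'o proof.
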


\subsection{Changemaker lattices admitting obtuse superbases}
We will establish some restrictions on changemaker lattice which admits an obtuse superbase. The following proposition, which combines results from \cite{mccoy2013alternating} and \cite{mccoy2014noninteger}, will allow us to restrict our attention to integer changemaker lattices.
\begin{prop}\label{prop:nearby}
Suppose that for some $p/q=n-r/q$ with $q>r\geq 1$, the changemaker lattice
\[L_{p/q}=\langle w_0, \dotsc, w_l\rangle^\bot \subseteq \mathbb{Z}^{t+s+1},\]
where $w_0=e_0+ \sigma_1 f_1 + \dotsb + \sigma_t f_t$
admits an obtuse superbase. Then the changemaker lattices
\[
L_{n}=\langle w_0\rangle^\bot \subseteq \mathbb{Z}^{t+1}
=\langle e_0, f_1, \dotsc, f_t \rangle
\]
and
\[
L_{n-1}=\langle w_0-e_0\rangle^\bot \subseteq \mathbb{Z}^{t}
=\langle f_1, \dotsc, f_t \rangle,
\]
both admit obtuse superbases.
Furthermore, if $\sigma_t>1$, then we can assume the obtuse superbase for $L_{n-1}$ contains a vector $x$ with $x\cdot f_1=-2$.
\end{prop}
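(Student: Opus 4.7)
The proof combines structural results from \cite{mccoy2013alternating} and \cite{mccoy2014noninteger}. The core strategy exploits the natural embedding $L_{n-1} \hookrightarrow L_{p/q}$ realised by the inclusion $\mathbb{Z}^t \hookrightarrow \mathbb{Z}^{t+s+1}$ sending $f_i \mapsto f_i$: for any $x = \sum x_i f_i$ one has $x \cdot w_k = 0$ automatically for $k \geq 1$ (since each $w_k$ with $k \geq 1$ lies in $\langle e_0, \dotsc, e_s\rangle$), while $x \cdot w_0 = \sum \sigma_i x_i = x \cdot (w_0 - e_0)$. Hence $x \in L_{p/q}$ if and only if $x \in L_{n-1}$. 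The plan is to modify the given obtuse superbase $B$ of $L_{p/q}$ through repeated applications of Lemma~\ref{lem:superbasemod} until it reveals obtuse superbases for the two nearby integer changemaker lattices.

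First I would obtain an obtuse superbase for $L_{n-1}$ by iteratively decomposing vertices of $B$ that mix $f$- and $e$-components into a pure $f$-piece (lying in $L_{n-1}$ under the embedding above) and a pure $e$-piece (lying in the complementary sublattice $L_{p/q} \cap \langle e_0, \dotsc, e_s\rangle$, which is governed by the plumbing structure of $w_1, \dotsc, w_l$). Each such decomposition has the form $v = x + y$ with $x \cdot y = -1$ required by Lemma~\ref{lem:superbasemod}. After sufficiently many such modifications, the modified superbase splits into two mutually orthogonal parts, the $f$-part yielding an obtuse superbase for $L_{n-1}$.

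Next I would obtain the obtuse superbase for $L_n$ by extending that of $L_{n-1}$ by one additional vertex, using that the quotient $L_n / L_{n-1}$ is infinite cyclic with a canonical generator (for instance $e_0 - f_1$ when $\sigma_1 = 1$, or an analogous element otherwise). Combining this new vertex with a compensating adjustment of one existing vertex to preserve the summing-to-zero condition produces the desired obtuse superbase for $L_n$. To arrange the final condition $x \cdot f_1 = -2$ for some vertex $x$ of the $L_{n-1}$-superbase, I would perform a last modification via Lemma~\ref{lem:superbasemod}; the existence of the required decomposition is guaranteed by the stable coefficient structure, in particular by the hypothesis $\sigma_t > 1$, which ensures the lattice contains vectors with the requisite coordinate pattern with respect to $f_1$.

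The main obstacle is the first step: showing that $B$ can always be transformed to split cleanly into an $f$-part and an $e$-part. The difficulty is that Lemma~\ref{lem:superbasemod} applies only when a vertex admits a decomposition $v = x + y$ with $x \cdot y = -1$, and producing a chain of such decompositions that eventually isolates the $L_{n-1}$-sublattice requires a careful analysis of how the plumbing vectors $w_1, \dotsc, w_l$ constrain the graph structure $G_B$ of the superbase. The technical heart of this analysis is carried out in \cite{mccoy2014noninteger}, while the structural properties of integer changemaker lattices needed for the extension step are supplied by \cite{mccoy2013alternating}.
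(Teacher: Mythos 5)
Your proposal captures part of the right intuition but diverges from the paper's argument in a way that leaves a genuine gap, most seriously in how you propose to obtain the superbase for $L_n$.

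The paper does not iteratively split vertices of the superbase of $L_{p/q}$ directly, nor does it ever attempt to pass from $L_{n-1}$ ``upward'' to $L_n$. Instead it pivots through the half-integer lattice $L_{n-\frac{1}{2}} = \langle w_0, e_1-e_0\rangle^\bot \subseteq \mathbb{Z}^{t+2}$. The reduction from $L_{p/q}$ to $L_{n-\frac{1}{2}}$ is invoked as a single black-box result (\cite[Proposition~7.7]{mccoy2014noninteger}), and then a structural theorem from \cite{mccoy2013alternating} is applied to the resulting obtuse superbase $B$ of $L_{n-\frac{1}{2}}$: there are exactly two vertices $v,w \in B$ with nonzero $e_0$-coordinate, and (using $\sigma_t>1$) one may take $v=-f_1+e_0+e_1$ and $w$ with $w\cdot f_1 = w\cdot e_0 = w\cdot e_1 = -1$, so $v\cdot w=-1$. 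Then \emph{both} target superbases are produced by a single elementary move on $B$: contracting the $v$--$w$ edge gives $B'=(B\setminus\{v,w\})\cup\{v+w\}$, an obtuse superbase for $L_{n-1}$ with $(v+w)\cdot f_1=-2$ automatically; deleting the $v$--$w$ edge gives $B''=(B\setminus\{v,w\})\cup\{v-e_1,w+e_1\}$, an obtuse superbase for $L_n$. So the $x\cdot f_1 = -2$ condition requires no separate ``last modification''; it falls out of the contraction.

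The concrete gap in your proposal is the step ``extend the superbase of $L_{n-1}$ by one additional vertex to get $L_n$.'' Suppose $B' = \{x_0,\dotsc,x_r\}$ is an obtuse superbase of $L_{n-1}$ with $x_0\cdot f_1 = -2$ and $\sum x_i = 0$. Any candidate new vertex $y$ (say $y = e_0-f_1$) must pair non-positively with every retained vertex, and the compensated vertex $x_j - y$ must do the same. But $y\cdot x_0 = -x_0\cdot f_1 = 2 > 0$, so $x_0$ cannot be retained unmodified; and if you instead replace $x_0$ by $x_0 - y$, you then need $(x_0 - y)\cdot x_i = x_0\cdot x_i + x_i\cdot f_1 \leq 0$ for all $i\geq 1$, which fails whenever $x_i\cdot f_1 > |x_0\cdot x_i|$. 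Since $\sum_{i\geq 1} x_i\cdot f_1 = 2$, such an $i$ generically exists, so this extension is not obtuse. There is no reason the quotient $L_n/L_{n-1}$ being infinite cyclic should repair this: an obtuse superbase is a rigid combinatorial object, not something you can augment generically. This is precisely why the paper works symmetrically downward from $L_{n-\frac{1}{2}}$ rather than stacking $L_n$ on top of $L_{n-1}$. Your first observation---that $L_{n-1}\hookrightarrow L_{p/q}$ via $f_i\mapsto f_i$---is correct and is implicitly behind the reduction, but the actual mechanism for realizing that reduction, and especially for producing $L_n$, needs the half-integer pivot and the explicit identification of $v$ and $w$.
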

\begin{proof}
Since $L_{p/q}$ is admits an obtuse superbase, it follows from \cite[Proposition~7.7]{mccoy2014noninteger} that the lattice
\[
L_{n-\frac{1}{2}}=\langle w_0, e_1-e_0\rangle^\bot \subseteq \mathbb{Z}^{t+2}
=\langle e_1,e_0, f_1, \dotsc, f_t \rangle
\]
also admits an obtuse superbase, which we will call $B$. The results of \cite{mccoy2013alternating} show that there are precisely two vertices $v$ and $w$ in $B$ with $v\cdot e_0,w\cdot e_0\ne 0$ and they satisfy $v\cdot w\leq -1$. Moreover, the results of the same paper show that we can assume that $v=-f_1+e_0+e_1$ and $w\cdot e_0=w\cdot e_1=-1$ and if there is $k$ such that $\sigma_k>1$, then we can assume that $w\cdot f_1=-1$.

\paragraph{}Consider the set $B'=B\setminus\{v,w\}\cup \{v+w\}$. Since $(v+w)\cdot e_0=(v+w)\cdot e_1=0$, we have $B'\subset L_{n-1}$. Since $B$ spans $L_{n-\frac{1}{2}}$, we see that $B'$ must span $L_{n-1}$. Since $B$ is an obtuse superbase for $L_{n-\frac{1}{2}}$, it follows that $B'$ is an obtuse superbase for $L_{n-1}$, where the graph $G_{B'}$ is obtained from $G_B$ by contracting the edge between $v$ and $w$. Furthermore, if there is $\sigma_k>1$, then $x=v+w$ is the required vector with $x\cdot f_1 = -2$.

\paragraph{}Now consider the set $B''=B\setminus\{v,w\}\cup \{v-e_1,w+e_1\}$. Since every element $x\in B\setminus\{v,w\}$ has $x\cdot e_1=0$, we see that every $x\in B''$ satisfies $x\cdot e_1=0$, so we have $B''\subseteq \langle e_0, f_1, \dotsc , f_t \rangle$ and hence $B''\subseteq L_{n+1}$. Since $B$ is an obtuse superbase for $L_{n-\frac{1}{2}}$ and $v\cdot w \leq -1$, it follows that $B''$ is an obtuse superbase for $L_{n+1}$, where the graph $G_{B''}$ is obtained from $G_B$ by deleting an edge between $v$ and $w$.
\end{proof}

The next lemma gives bounds on when a changemaker lattice can be decomposable.
\begin{lem}[\cite{GreeneLRP}, Lemma~5.1]\label{lem:reduciblebound}
Suppose that $L=\langle w_0 \rangle^\bot\subseteq \mathbb{Z}^{t}$ is a changemaker lattice,  where $w_0=\sigma_1 f_1 + \dotsb + \sigma_t f_t$ with $\sigma_i\geq 1$ for all $i$ and $\sigma_t>1$. Let $m\leq t$ be minimal such that $\sigma_m>1$. If $L$ is decomposable, then $\sigma_m=m-1$.\qed
\end{lem}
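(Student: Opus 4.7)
The plan is to prove the contrapositive: assume $L$ admits a nontrivial orthogonal splitting $L = L_1 \oplus L_2$, and deduce $\sigma_m = m-1$. The hypothesis $\sigma_1 = \dotsb = \sigma_{m-1} = 1 < \sigma_m$ together with the changemaker condition forces $2 \leq \sigma_m \leq m$, so the task is to exclude $\sigma_m \in \{2, \dotsc, m-2\}$ and $\sigma_m = m$.

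My starting observation is that, for all $1 \leq i < j \leq m-1$, the vector $f_i - f_j$ lies in $L$, and any two such vectors sharing an index pair to $\pm 1$. Hence, provided $m \geq 3$, they all lie in a single summand, which I shall label $L_1$. (When $m = 2$ the hypothesis forces $\sigma_m = 2 \neq m-1$, so the lemma reduces to asserting indecomposability; this small case is handled by a direct short-vector and determinant count, as $\norm{w_0}$ then admits no compatible orthogonal factorisation.)

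In the regime $2 \leq \sigma_m \leq m-2$, I claim every vector $v_I := \sum_{i \in I} f_i - f_m$ with $I \subseteq \{1, \dotsc, m-1\}$ of size $\sigma_m$ also lies in $L_1$. Indeed, because $\sigma_m \leq m-2$, one can pick $i_0 \in I$ and $j_0 \in \{1, \dotsc, m-1\} \setminus I$, and then $v_I \cdot (f_{i_0} - f_{j_0}) = 1 \neq 0$, forcing $v_I \in L_1$. Together with the $f_i - f_j$'s these vectors span the full rank-$(m-1)$ intersection $L \cap \langle f_1, \dotsc, f_m \rangle$. An inductive extension over higher indices $k > m$, using the changemaker condition to produce for each such $k$ a short vector in $L$ involving $f_k$ and a subset of the earlier $f_i$'s that pairs nontrivially with a previously-placed element of $L_1$, exhausts a spanning set of $L$ and forces $L_2 = 0$, a contradiction.

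The main obstacle is the case $\sigma_m = m$, because now $\sigma_m$ exceeds $\sigma_1 + \dotsb + \sigma_{m-1}$ and no vector $v_I$ of the above form exists in $L$. My plan is to exploit $\sigma_{m+1}$ when $m < t$: the changemaker condition gives $\sigma_{m+1} = m + k$ for some $0 \leq k \leq m-1$, producing a vector $f_{m+1} - f_m - \sum_{i \in J} f_i \in L$ with $|J| = k$, which pairs nontrivially with some $f_i - f_j$ whenever $k \leq m-2$ and so is pinned into $L_1$. Combined with $\sigma_m f_j - f_m \in L$ (available for any $j \leq m-1$), this traps $f_m$'s contribution in $L_1$ and restarts the inductive argument on higher indices. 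The delicate subcases $k = m-1$ and $m = t$ require a direct inspection of the lattice: one argues that any putative $L_2$ would have to be a rank-one summand whose generator has norm dividing $\norm{w_0}$, and then checks that the short vectors available in $L$ simply fail to provide such a generator, completing the contradiction.
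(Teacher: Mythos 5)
Your overall strategy---place the norm-$2$ vectors $f_i - f_j$ ($i,j < m$) into a single summand $L_1$ and then propagate $L_1$ outward by a connectivity/induction argument---has the right shape, but as written there are genuine gaps. The step ``forcing $v_I \in L_1$'' silently relies on $v_I$ being \emph{irreducible}: pairing nontrivially with an element of $L_1$ only shows that the $L_1$-component of $v_I$ is nonzero, not that its $L_2$-component vanishes. For $v_I$ (and for the later vectors $f_k - \sum_{i\in A_k} f_i$) irreducibility does in fact hold because the coefficients lie in $\{-1,0,1\}$ and any orthogonal splitting would have to separate the support, which the coordinate $m$ (resp.\ $k$) prevents; but this must be said, since it is the hinge on which the whole induction turns. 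More seriously, the inductive step never confronts the \emph{tight} case: when $\sigma_k = 1 + \sigma_1 + \dotsb + \sigma_{k-1}$, which the changemaker condition permits, there is no subset $A_k \subseteq \{1,\dotsc,k-1\}$ with $\sigma_k = \sum_{i \in A_k}\sigma_i$; the natural vector of $L$ involving $f_k$ then carries a coefficient $-2$ on $f_1$, the support-splitting irreducibility argument breaks, and a different device is needed. Your sketch produces no vector at all in this situation.

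The two boundary regimes are also not actually argued. The $m=2$ case is an infinite family ($\sigma = (1,2,\sigma_3,\dotsc,\sigma_t)$), and a ``determinant count'' cannot settle it, since $\norm{w_0}$ can and does factor nontrivially---for $\sigma=(1,2,2)$ one has $\det L = 9 = 3\cdot 3$, and ruling out a $\langle 3\rangle \oplus \langle 3\rangle$ splitting requires analysing the short vectors of $L$, not just the determinant. For $\sigma_m = m$ you lean on $\sigma_m f_j - f_m$, but this vector has a coefficient $\sigma_m \ge 2$ and is not obviously irreducible, so the claim that it ``traps $f_m$'s contribution in $L_1$'' is unsupported; and deferring the subcases $k=m-1$ and $m=t$ to ``a direct inspection of the lattice'' leaves open exactly the part that is difficult. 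Until the irreducibility of the relevant vectors is established, the tight case is treated, and the $\sigma_m=m$ and $m=2$ regimes are actually argued rather than postponed, this is a plausible outline but not a proof.
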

%\begin{proof}[Proof (sketch).]
%For each $2\leq k\leq t$, the changemaker condition combined with Proposition~\ref{prop:CMprop} implies that either $\sigma_k=1+\sigma_1+ \dotsb + \sigma_{k-1}$ or there is $A_k \subseteq \{1, \dotsc, k-2\}$ such that $\sigma_k -\sigma_{k-1} = \sum_{i \in A_k} \sigma_i$. Thus we can define $v_k\in L$ by $v_k=-f_k + f_{k-1}+ \dotsb + f_2 + 2f_1$ or $v_k=-f_k+f_{k-1}+\sum_{i \in A_k}f_i$. Each $v_k$ constructed in this way is irreducible (cf. \cite[Section~3.4]{GreeneLRP}). Taking a set $\{v_2, \dotsc, v_t\}$ constructed in this fashion gives a basis for $L$.
%
%We will show that $L$ is indecomposable if $\sigma_m\ne m-1$. Suppose that $L$ can be written as an orthogonal direct sum $L=L_1 \oplus L_2$. Since each $v_k$ is irreducible, we must have $v_k \in L_1$ or $v_k \in L_2$. Without loss of generality, we may assume that $v_2\in L_1$. For each $2<k<m$, we have $v_k=e_{k-1}-e_k$ and $v_k\cdot v_{k-1}=-1$ so it follows that $\{v_2,\dotsc, v_{m-1}\}\subseteq L_1$. If $\sigma_m\ne m-1$, then $v_m\ne -e_m + e_{m-1}+\dotsb + e_1$, which implies we can find $k<m$ such that $v_m\cdot v_l\ne 0$. Therefore $v_m$ is also in $L_1$. For any $k>m$ we can find $l<k$ such that $v_k\cdot v_l \ne 0$. If $v_k\cdot e_1=0$, then let $l$ be minimal such that $v_k\cdot e_l\ne 0$, giving $v_k\cdot v_l=-1$. If $v_k\cdot e_1\ne0$, then we have at least one of $v_2\cdot v_k$ or $v_2\cdot v_m$ non-zero. This shows that we must have $v_k\in L_1$ for all $k$. Since the $v_k$ form a basis, this shows that $L=L_1$. This proves that $L$ is indecomposable.
%\end{proof}
We get a similar bound on a changemaker lattice admitting an obtuse superbase in terms of its stable coefficients. This will allow us to prove the upper bound in Theorem~\ref{thm:widthbound}.
\begin{lem}\label{lem:graphbound}
Suppose that $L=\langle w_0 \rangle^\bot\subseteq \mathbb{Z}^{t}$, is a changemaker lattice, where $w_0=\sigma_1 f_1 + \dotsb + \sigma_t f_t$ with $\sigma_i\geq 1$ for all $i$. If the stable coefficients $(\sigma_m, \dots, \sigma_t)$ are a non-empty tuple and $L$ admits an obtuse superbase, then $\sigma_{m}\geq m-2$ and
\[\norm{w_0}\leq 1 + \sigma_m + \sum_{i=m}^t \sigma_i^2.\]
\end{lem}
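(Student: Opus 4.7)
The plan is to first handle the decomposable case, and then to argue by contradiction using the graph lattice structure furnished by an obtuse superbase. If $L$ is decomposable, Lemma~\ref{lem:reduciblebound} gives $\sigma_m = m-1 \geq m-2$ and there is nothing more to prove. So assume $L$ is indecomposable; by Proposition~\ref{prop:graphsuperbase} and Lemma~\ref{lem:2connectgraphlat}, the given obtuse superbase $B$ realizes $L$ as the graph lattice $\Lambda(G)$ of some 2-connected graph $G$, with every vertex of $B$ irreducible in $L$.

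Suppose for contradiction that $\sigma_m \leq m-3$. The hypothesis $\sigma_i \geq 1$ together with $m$ minimal such that $\sigma_m > 1$ forces $\sigma_1 = \cdots = \sigma_{m-1} = 1$, and this produces two families of short vectors in $L$. First, the differences $f_i - f_j$ for $1 \leq i < j \leq m-1$ are norm-two elements generating an $A_{m-2}$ root sublattice. Second, for every subset $A \subseteq \{1, \dots, m-1\}$ of size $\sigma_m$, the vector $x_A = f_m - \sum_{i \in A} f_i$ lies in $L$ with $\norm{x_A} = \sigma_m + 1 \leq m-2$. The plan is to expand each such short vector in the vertices of the obtuse superbase and then use obtuseness $v\cdot v' \leq 0$ for distinct vertices, together with the characterisation of irreducible elements in Lemma~\ref{lem:irreducible}, to severely constrain which subsets $R \subseteq B$ can appear in the expansions. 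Lemma~\ref{lem:superbasemod} provides the flexibility needed to replace $B$ whenever a vertex admits a splitting $v = x + y$ with $x \cdot y = -1$, which should reduce the analysis to a small number of canonical graph configurations.

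The norm inequality $\norm{w_0} \leq 1 + \sigma_m + \sum_{i=m}^t \sigma_i^2$ in the conclusion is then a direct arithmetic consequence of $\sigma_m \geq m-2$ combined with $\sigma_1 = \cdots = \sigma_{m-1} = 1$ and $\norm{w_0} = \sum_i \sigma_i^2$. The main obstacle, and the substantive part of the argument, is the combinatorial case analysis that simultaneously accommodates the $\binom{m-1}{2}$ norm-two vectors $f_i - f_j$ and the $\binom{m-1}{\sigma_m}$ vectors $x_A$ within an obtuse superbase of a 2-connected graph. The intuition behind the bound is that admitting an obtuse superbase is only one step weaker than being decomposable, so one should expect to lose only a single unit in the resulting inequality (from $m-1$ down to $m-2$); making this precise is the hard part and will likely require tracking irreducible decompositions carefully through several applications of Lemma~\ref{lem:superbasemod}.
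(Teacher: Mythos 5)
Your setup is exactly the paper's: dispose of the decomposable case via Lemma~\ref{lem:reduciblebound}, pass to a graph lattice $\Lambda(G)$ with $G$ 2-connected, and aim to rule out $\sigma_m\leq m-3$ using the abundance of short vectors arising from $\sigma_1=\dots=\sigma_{m-1}=1$. The arithmetic derivation of the norm inequality from $\sigma_m\geq m-2$ is also correct. However, the central combinatorial step is missing: you correctly list the short vectors $f_i-f_j$ and $x_A=f_m-\sum_{i\in A}f_i$ and say the plan is to constrain their expansions in $B$, but you do not supply an actual argument, only the remark that ``making this precise is the hard part.'' That is the entire content of the lemma in the indecomposable case, so as written the proof is incomplete.

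The specific mechanism the paper uses, which your sketch does not reach, has two parts. First, one applies Lemma~\ref{lem:superbasemod} not haphazardly to ``reduce to canonical configurations,'' but inductively and with a precise target: for $2\leq i\leq m-1$, the norm-two vectors $v_i=f_i-f_{i-1}$ can, one at a time, be made vertices of the obtuse superbase. (At each step one uses irreducibility of $v_k$ and the fact that $v_{k-1}$ is already a vertex to find $u\in B$ with $(u-v_k)\cdot v_k=-1$, then applies the lemma and checks that the previously arranged $v_j$ survive.) Second, once $v_2,\dots,v_{m-1}\in B$, one does not need all $\binom{m-1}{\sigma_m}$ vectors $x_A$; the single well-chosen vector $v_m=-f_m+f_{m-1}+\dots+f_b$ (where $\sigma_m=m-b$) suffices. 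It is irreducible, hence a sum of vertices $[R]$, and pairs nontrivially with $v_b$; but $\norm{v_b}=2$ allows only two vertices of $B$ to pair nontrivially with $v_b$, and when $b\geq 3$ both of those slots are occupied by $v_{b-1}$ and $v_{b+1}$, neither of which pairs with $v_m$. This is a contradiction, forcing $b\leq 2$, i.e.\ $\sigma_m\geq m-2$. Without this ``straighten the superbase along the $A_{m-2}$ path, then hit it with one carefully chosen $x_A$'' idea, the case analysis you gesture at has no clear termination.
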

\begin{proof}
If $L$ is decomposable, then Lemma~\ref{lem:reduciblebound} shows that the bound is automatically satisfied. We will assume from now on that $L$ is indecomposable.

For $2\leq i \leq m-1$, let $v_i$ be the vector $v_i=e_i-e_{i-1}$. Since $\sigma_i=\sigma_{i-1}=1$ for $i$ in this range, we have $v_i\in L$. We will use Lemma~\ref{lem:superbasemod} to show that $L$ admits an obtuse superbase containing the vectors $v_2, \dotsc, v_{m-1}$.

Let $B$ be an obtuse superbase and let $k\leq m-1$, be minimal such that $v_k$ is not in $B$. Suppose first that $k=2$. Since $v_2$ is irreducible, Lemma~\ref{lem:irreducible} implies that it can be written as a sum of elements of $B$. Hence, there is a vector $u\in B$ with $u\cdot v_2>0$. By Lemma~\ref{lem:2connectgraphlat}, the indecomposability of $L$ implies that $u$ is irreducible. In turn, this implies that $(u-v_2)\cdot v_2=u\cdot v_2 -2 = -1$. Therefore by applying Lemma~\ref{lem:superbasemod}, we see that that there is an obtuse superbase containing $v_2$.

Now we suppose that $k>2$. Since $v_k$ is irreducible, Lemma~\ref{lem:irreducible} shows that it can be written as a sum of elements of $B$. Since $v_{k-1}$ is a vertex of $B$ and $v_k\cdot v_{k-1}=-1$, there is $u\in B$ with $u\cdot v_k = -u\cdot v_{k-1}=1$. This must satisfy $(u-v_k)\cdot v_k= -1$. By Lemma~\ref{lem:superbasemod}, this implies we can find an obtuse superbase containing $v_k$. Moreover, since $(u-v_k)\cdot v_j \leq 0$ and $v_k\cdot v_j \leq 0$ for all $2\leq j<k$, so can assume that $v_2, \dots, v_{k-1}$ are also in this obtuse superbase. Thus proceeding inductively, we see that we can assume that $v_2, \dots, v_{m-1}$ are all contained in the obtuse suberbase $B$.

\paragraph{} Suppose that $\sigma_m=m-b$ for some $m-2\geq b\geq 2$. Consider the vector
$v_m=-e_m+e_{m-1}+ \dotsb + e_{b}\in L$. Since this is irreducible, Lemma~\ref{lem:irreducible} shows that we may write it as a sum of vertices $v_m = \sum_{x\in R} x$ for some subset $R\subseteq B$. Since $v_m\cdot v_b = -1$, we have $v_b \notin R$ and there must exist $u\in R$ with $u\cdot v_b =-1$ and $u\cdot v_m=1$. However, as $\norm{v_b}=2$ there are at most two vectors in $B$ which pair nontrivially with $v_b$. If $b\geq 3$ then, we have $v_{b-1}\cdot v_b = v_{b+1}\cdot v_b =-1$ and $v_{b-1}\cdot v_m =v_{b+1}\cdot v_m =0$. This implies that the required $u\in B$ cannot exist if $b\geq 3$. Thus we must have $b=2$. This shows that $\sigma_m\geq m-2$, as required. Since $\sigma_i=1$ for $i<m$, we have
\[\norm{w_0}=m-1 + \sum_{i=m}^t \sigma_i^2\leq 1+\sigma_m+\sum_{i=m}^t \sigma_i^2,\]
which is the required bound. This completes the proof.
\end{proof}

This allows us to prove the inequality which will give Theorem~\ref{thm:upperbound}.
\begin{lem}\label{lem:gbound}
Suppose that $L=\langle \sigma_1 f_1 + \dotsb + \sigma_t f_t \rangle^\bot\subseteq \mathbb{Z}^{t}$ is a changemaker lattice which admits an obtuse superbase and $\sigma_t>1$. Then
\[\sum_{i=1}^t \sigma_i^2 \leq 2\sum_{i=1}^t \sigma_i (\sigma_i -1) + 3\]
\end{lem}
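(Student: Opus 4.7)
The plan is to rearrange the target inequality into a form to which Lemma~\ref{lem:graphbound} directly applies. First I would reduce to the case $\sigma_i\geq 1$ for all $i$: any $\sigma_i=0$ contributes nothing to either side of the inequality and corresponds to an orthogonal splitting $L\cong L'\oplus\mathbb{Z}$, so we lose no generality. Letting $m$ be minimal with $\sigma_m>1$, we then have $\sigma_1=\dotsb=\sigma_{m-1}=1$ and $\sigma_i\geq 2$ for all $i\geq m$.

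Next I would recast the inequality. Using the identity $2\sigma_i(\sigma_i-1)-\sigma_i^2=\sigma_i(\sigma_i-2)$, the bound
\[\sum_{i=1}^t\sigma_i^2\leq 2\sum_{i=1}^t\sigma_i(\sigma_i-1)+3\]
is equivalent to $\sum_{i=1}^t\sigma_i(\sigma_i-2)\geq -3$. Since $\sigma_i=1$ contributes $-1$ for each $i<m$ and $\sigma_i(\sigma_i-2)\geq 0$ for $i\geq m$, this is in turn equivalent to
\[\sum_{i=m}^t\sigma_i(\sigma_i-2)\geq m-4.\]

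Two cases then arise. If $m\leq 4$, the right-hand side is nonpositive while every summand on the left is nonnegative, so the inequality is immediate. If $m\geq 5$, Lemma~\ref{lem:graphbound} gives $\sigma_m\geq m-2\geq 3$, so by monotonicity of $x\mapsto x(x-2)$ on $[2,\infty)$,
\[\sigma_m(\sigma_m-2)\geq (m-2)(m-4)\geq m-4,\]
where the last inequality is $(m-3)(m-4)\geq 0$. The remaining summands are nonnegative, completing the argument.

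I do not anticipate a substantive obstacle here: all the real content has been pushed into Lemma~\ref{lem:graphbound}, and once the target is rephrased as a bound on $\sum_{i\geq m}\sigma_i(\sigma_i-2)$, the work reduces to a short case analysis on $m$. The only minor subtlety is checking that removing zero coefficients from $w_0$ preserves the obtuse-superbase hypothesis, which follows from the corresponding orthogonal decomposition of $L$.
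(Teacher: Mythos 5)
Your proof is correct and rests on the same key ingredient as the paper's: the bound $\sigma_m\geq m-2$ from Lemma~\ref{lem:graphbound}, followed by elementary algebra. The paper organises the algebra differently (it writes $\sum_{i<m}\sigma_i^2=m-1\leq \sigma_m+1$ and then bounds $\sigma_i^2$ termwise by $2\sigma_i(\sigma_i-1)$ for $i\geq m$, with an extra $+2$ at $i=m$), whereas you recast the target as $\sum_{i\geq m}\sigma_i(\sigma_i-2)\geq m-4$ and split on $m\leq 4$ versus $m\geq 5$; these are the same argument rearranged.
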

\begin{proof}
Let $m$ be minimal such that $\sigma_m>1$. Since $L$ admits an obtuse superbase, Lemma~\ref{lem:graphbound} shows that we have
\[\sum_{i=1}^t \sigma_i^2\leq \sum_{i=m}^t \sigma_i^2 + \sigma_m +1.\]
Observe that if $\sigma_i \geq 2 $, then $\sigma_i^2 \leq 2 \sigma_i (\sigma_i -1)$. Since $\sigma_m\geq 2$, we also have $\sigma_m^2 + \sigma_m \leq 2\sigma_m (\sigma_m -1) +2$. Combining these inequalities, we obtain
\begin{align*}
\sum_{i=1}^t \sigma_i^2 &\leq \sum_{i=m}^t \sigma_i^2 + \sigma_m +1 \\
                        &\leq 2\sum_{i=m}^t \sigma_i(\sigma_i-1) +3 \\
                        &=2\sum_{i=1}^t \sigma_i (\sigma_i -1) + 3,
\end{align*}
which is the required inequality.
\end{proof}

\subsection{The main results}\label{subsec:mainresults}
Suppose that $K$ is an nontrivial knot such that $S^3_{p/q}(K)$ is an alternating surgery, that is $S_{p/q}^3(K)=\Sigma(L)$ for an alternating knot or link $L$. Since a nontrivial $L$-space knot cannot admit both positive and negative $L$-space surgeries and
\[-S_{r}^3(K)=S_{-r}^3(\overline{K})=\Sigma(\overline{L}),\]
we may assume that $p/q>0$ and that all other alternating surgeries on $K$ arise from positive slopes.

Let $D$ be a reduced alternating diagram of $L$. By Theorem~\ref{thm:Gibbonssouped} and Theorem~\ref{thm:altboundssharp}, the lattice $\Lambda_D$ is isomorphic to a $p/q$-changemaker lattice,
\[\Lambda_{\frac{p}{q}}=\langle w_0, \dotsc, w_l \rangle^{\bot}\subseteq \mathbb{Z}^{\rk \Lambda_D +l +1},\]
whose stable coefficients are determined by the Alexander polynomial of $K$.

Since $\Lambda_D$ is the graph lattice associated to the white graph of $D$, $\Lambda_{\frac{p}{q}}$ admits an obtuse superbase. If we write $w_0$ in the form
\[w_0=
\begin{cases}
e_0+\sigma_1 f_1 + \dotsb + \sigma_t f_t    & q>1,\\
\sigma_1 f_1 + \dotsb + \sigma_t f_t        & q=1.
\end{cases}
\]
Since $D$ is reduced, $\Gamma_D$ contains no cut-edges. This implies that $\Lambda_D$ contains no vectors of norm 1 and so $\sigma_i \geq 1$ for all $i$. As we are assuming that $g(K)>0$, \eqref{eq:calculateg} implies that $\sigma_t>1$. So stable coefficients form a nonempty tuple, $(\sigma_m, \dots, \sigma_t)$. This allows us to define \begin{equation*}
N=\sigma_m + \sum_{i=m}^t \sigma_i^2,
\end{equation*}
which will be the integer appearing in the statement of Theorem~\ref{thm:widthbound}.

\begin{proof}[Proof of Theorem~\ref{thm:upperbound} and Theorem~\ref{thm:widthbound}]
Lemma~\ref{prop:nearby} implies that the $\lceil p/q \rceil$-changemaker lattice
\[
\Lambda' =\langle w_0 \rangle^\bot\subseteq
\begin{cases}
\mathbb{Z}^{t+1} & q>1 \\
\mathbb{Z}^{t  } & q=1,
\end{cases}
\]
also admits an obtuse superbase.
As shown by \eqref{eq:calculateg}, we have \[2g(K)=\sum_{i=1}^t \sigma_i (\sigma_i-1).\] Therefore, Lemma~\ref{lem:gbound} gives the bound
\[\lceil p/q \rceil = \norm{w_0} \leq 4g(K)+3.\]
This proves Theorem~\ref{thm:upperbound}. From Lemma~\ref{lem:graphbound}, we get the upper bound
\[p/q\leq \norm{w_0}\leq 1+\sigma_m + \sum_{i=m}^t \sigma_i^2=N+1.\]
Since $(\sigma_1, \dotsc, \sigma_t)$ satisfies the changemaker condition, we must have
\[\sigma_m \leq 1+\sum_{i=1}^{m-1}\sigma_i=1+\sum_{i=1}^{m-1}\sigma_i^2,\]
where the second inequality holds since $\sigma_i=1$ for $1\leq i<m$. Thus we obtain
\[p/q\geq \sum_{i=1}^t \sigma_i^2 \geq \sum_{i=m}^t \sigma_i^2 + \sigma_m-1= N-1.\]
This completes the proof of Theorem~\ref{thm:widthbound}.
\end{proof}
The lower bound $N-1$ appearing in this proof arises from the fact that there can be no $r$-changemaker lattice for any $r<N-1$ with stable coefficients $(\sigma_m, \dots, \sigma_t)$. Thus it follows from Theorem~\ref{thm:Gibbonssouped} that if $S^3_r(K)$ bounds a negative-definite sharp manifold for $r>0$, then $r\geq N-1$. This justifies the claim made in Remark~\ref{rem:sharplowerbound}.

\paragraph{}Now it remains to prove Theorem~\ref{thm:fullrange}.
\begin{proof}[Proof of Theorem~\ref{thm:fullrange}]
Assume that $S_{r}^3(K)$ is an alternating surgery for $r\in \{r_1,N,r_2\}$ with $N-1\leq r_1<N <r_2 <N+1$. Let $S_{r_i}^3(K)=\Sigma(L_i)$ for $i=1,2$ and $S_{N}^3(K)=\Sigma(L)$ for $L$ and $L_i$ alternating. For $i=1,2$ let $D_i$ be a reduced alternating diagram for $L_i$ and let $D$ be a reduced alternating diagram for $L$. Theorem~\ref{thm:Gibbonssouped} shows that there is $w_0=\sigma_t f_t+ \dotsb +  \sigma_2 f_2$, such that $\Lambda_{D_1}$ is isomorphic to the $r_1$-changemaker lattice
\[\Lambda_{r_1}=\langle w_0+e_0, w_1, \dotsc, w_{l_1} \rangle^\bot \subseteq \langle f_2, \dotsb, f_t,e_0, \dotsc, e_{s_1} \rangle,\]
$\Lambda_{D_2}$ is isomorphic to the $r_2$-changemaker lattice
\[\Lambda_{r_2}=\langle w_0+f_1+e_0, w_1, \dotsc, w_{l_2} \rangle^\bot \subseteq \langle f_1, f_2, \dotsb, f_t,e_0, \dotsc, e_{s_2} \rangle,\]
and $\Lambda_D$ is isomorphic the $N$-changemaker lattice
\[\Lambda_{N} = \langle w_0 \rangle^\bot \subseteq \langle f_1, \dotsc , f_t \rangle.\]
Since $\Lambda_{r_2}$ admits an obtuse superbase, Proposition~\ref{prop:nearby} implies that $\Lambda_N$ admits an obtuse superbase containing a vertex $v$ with $v\cdot f_1=-2$. Since $\Lambda_{r_1}$ is a changemaker lattice, $(\sigma_2, \dotsc, \sigma_t)$ must satisfy the changemaker condition. Therefore, if $g>1$ is minimal such that $v\cdot f_g \geq 0$, then Proposition~\ref{prop:CMprop} implies that there is $A\subseteq \{2, \dotsc, g-1\}$ with $\sigma_g -1 =\sum_{i\in A}\sigma_i$. If we set $z=f_g -f_1 - \sum_{i\in A} f_i$, we have $z\in \Lambda_N$ and we can compute
\begin{align*}
(v-z) \cdot z &= v\cdot f_g - 1 + -(v\cdot f_1 + 1) - \sum_{i \in A} (v\cdot f_i +1)\\
    &\geq v\cdot f_g - v\cdot f_1 -2 = v\cdot f_g\geq 0.
\end{align*}
Since $z\ne v$, this shows that $v$ is reducible. Thus Lemma~\ref{lem:2connectgraphlat} implies that $\Lambda_{N}$ is decomposable and that if $\Lambda_N$ is isomorphic to a graph lattice $\Lambda(G)$ for any connected graph $G$, then $G$ contains a cut vertex. This shows that the white graph $\Gamma_D$ contains a cut vertex. Since, we have assumed that $D$ is reduced, this implies that $L=L_1 \# L_2$ for nontrivial $L_1$ and $L_2$. Therefore $S_{N}^3(K)=\Sigma(L_1)\# \Sigma(L_2)$ is reducible. Using work of Hoffman, Matignon-Sayari showed that if $S_N^3(K)$ is a reducible surgery, then either $N\leq 2g(K)-1$ or $K$ is a cable knot \cite{hoffman98reducing,Matignon03longitudinal}. Since we have
\[N>2g(K)=\sum_{i=1}^t \sigma_i(\sigma_i-1),\]
it follows that $K$ is cable knot. This completes the proof of Theorem~\ref{thm:fullrange}.
\end{proof}

\section{Examples and questions}\label{sec:examples}
We give some examples relating to alternating surgeries and sharp 4-manifolds to illustrate the results of this paper. We then conclude the paper by discussing some questions that arise naturally from this work.
%In Section~\ref{sec:Montytrick}, we show how the Montesinos trick can be used to produce knots admitting alternating surgeries. After that, we discuss the alternating surgeries on $(-2,3,7)$-pretzel knot. We also give examples of $L$-space knots which don't admit any alternating surgeries. These examples are all cables of the trefoil, however we must use the results of this paper in different ways in each case.

\subsection{Alternating surgeries via the Montesinos trick}\label{sec:Montytrick}
We will now describe a construction for building knots admitting alternating surgeries. As far as the author is aware, this construction accounts for all known examples of alternating surgeries.

\paragraph{}An {\em almost-alternating diagram} $D$ is one which can be obtained by a crossing change from an alternating diagram. We call a crossing which can be changed to obtain an alternating diagram a {\em dealternating crossing}. Now let $D$ be an almost-alternating diagram of the unknot with a dealternating crossing $c$ and let $B$ be a small ball containing $c$. Since the double cover of $S^3$ branched over the unknot is $S^3$, the ball $B$ lifts to a solid torus $T\subseteq S^3$ when we take the double cover of $S^3$ branched over $D$. Let $K\subseteq S^3$ be the knot given by the core of $T$. If $D'$ is obtained from $D$ by replacing $c$ with some other rational tangle, then the Montesinos trick shows that $\Sigma(D')$ is obtained by surgery on $K$ \cite{montesinos1973variedades}. Since we may perform tangle replacements such that the resulting diagram is alternating, we see that $K$ admits alternating surgeries. If we take $D'$ to be the alternating diagram obtained by changing $c$, then the resulting surgery is half-integral
\[S^3_{n+\frac{1}{2}}(K)=\Sigma(D'),\]
for some $n\in \mathbb{Z}$. By reflecting $D$, if necessary, we may assume that $n$ is positive. It can be shown (e.g \cite[Proposition~5.4]{mccoy2014noninteger}) there are tangle replacements showing that $S^3_{r}(K)$ is an alternating surgery for all $r$ in the range $n\leq r \leq n+1$.
\begin{rem}\label{rem:Liamsremark}
It follows from the work of Watson that for all $r\geq n$, the manifold $S^3_{r}(K)$ is the double branched cover of a quasi-alternating link $L$ \cite{watson11quasi}. However, Theorem~\ref{thm:widthbound} shows that when $K$ is non-trivial $L$ can only be alternating for $r\leq n+2$. Thus we see that almost-alternating diagrams of the unknot gives rise to infinite families of non-alternating quasi-alternating knots and links.
\end{rem}
\begin{rem} It follows from Theorem~\ref{thm:fullrange}, that if $K$ is not a cable knot or the unknot, then $K$ can admit at most one other alternating surgery with $r=n+2$ or $r=n-1$. If one uses the generalisation of Theorem~\ref{thm:fullrange} asserted in Remark~\ref{rem:fullrangeextension}, then we see that actually neither of these possibilities can arise and that $S_{r}^3(K)$ is an alternating surgery if and only if $n\leq r\leq n+1$.
\end{rem}

As an example, we see what the results of this paper say about alternating surgeries on the $(-2,3,7)$-pretzel knot and describe how they arise through the construction given in this section.

\begin{exam}\label{ex:pretzel}
Let $K$ denote the $(-2,3,7)$-pretzel knot. It is well-known that $K$ admits two lens space surgeries \cite{fintushel80lenssurgery}. This implies that $K$ is an $L$-space knot and in particular that it has alternating surgeries. The Alexander polynomial is
\[\Delta_K(t)=t^5+t^{-5}-(t^4+t^{-4})+ t^2+t^{-2} -(t^1+t^{-1}) + 1.\]
The corresponding non-zero torsion coefficients are $t_0=t_1=2$ and $t_2=t_3=t_4=1$. From Lemma~\ref{lem:mubound} we can deduce that the stable coefficients of the corresponding changemaker vector are $(2,2,3)$. If we apply Theorem~\ref{thm:widthbound} to $K$, then integer $N$ we obtain is $N=3^2+2^2+2^2 + 2=19$. Therefore, if $S_r^3(K)$ is an alternating surgery, then $18\leq r \leq 20$.

Since the changemaker lattice
\[L=\langle 3f_6 + 2f_5 + 2f_4 + f_3 + f_2 + f_1 \rangle^\bot\]
does not admit an obtuse superbase, we see that $S^3_r(K)$ cannot be an alternating surgery for $19<r \leq 20$.

In fact, $K$ arises through the construction given in Section~\ref{sec:Montytrick}, and for each $r$ in $18\leq r\leq 19$, $S_r^3(K)$ branches over an alternating knot or link obtained by tangle replacement on the knot $8_{17}$, as shown in Figure~\ref{fig:pretzelexample}.
\end{exam}

\begin{figure}[h]
  \centering
  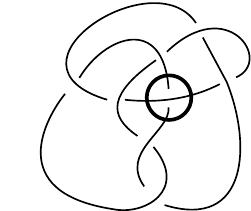
 \caption{A diagram of $8_{17}$ with its unknotting crossing circled. For each $r$ in the range $18\leq r \leq 19$, $r$-surgery on the $(-2,3,7)$-pretzel knot yields the branched double cover of an alternating knot or link obtained by replacing the unknotting crossing in $8_{17}$ by some rational tangle. Note that both resolutions of the unknotting crossing gives a 2-bridge knot or link. The two resolutions correspond to the cases $r=18$ and 19.}
 \label{fig:pretzelexample}
\end{figure}

%\subsection{Torus knots}
%The torus knot $T_{m,n}$ admits alternating surgeries for all slopes in the range $mn-1\leq r \leq mn+1$. This can be seen by explicitly calculating the manifold arising by surgery on the torus knot (e.g Lemma~4.4 of \cite{Owens12negdef}) and showing that for this range they are the branched double cover of alternating links, (which are necessarily 2-bridge or Montesinos links). Alternatively, we can see that these surgeries arise through the construction in Section~\ref{sec:Montytrick}. The manifold $S^3_{mn\pm \frac{1}{2}}(T_{m,n})$ is the lens space $L(2mn \pm 1, 2n^2)$ \cite{Moser71elementary}. Since $L(2mn \pm 1, 2n^2)$ is the double branched cover of an unknotting number one 2-bridge knot, we can take an alternating 2-bridge diagram for this knot and by changing an appropriate choice of unknotting crossing, we obtain an almost-alternating diagram of the unknot for which the construction in Section~\ref{sec:Montytrick} yields $T_{m,n}$.
\subsection{Some knots with no alternating surgeries}
We  use  the  results  of  this paper to exhibit two examples of
$L$-space knots which do not admit any alternating surgeries. Although both are cables of the trefoil, they do not admit alternating surgeries for different reasons: in one case, the cabling slope is ``too large" and
in the other it is ``too small".

\begin{exam}\label{ex:215cable}
Let $K$ be the $(2,15)$-cable of $T_{2,3}$. Since $S^3_{30}(K)= S^3_{15/2}(T_{3,2})\# L(2,1)$ is an $L$-space, $K$ is an $L$-space knot. We will show that this does not admit any alternating surgeries. The Alexander polynomial of $K$ is given by
\[\Delta_K(t)=t^9+t^{-9}-(t^8+t^{-8}) +t^5+t^{-5}-(t^4+t^{-4}) +t^3+t^{-3}-(t^2+t^{-2})+t+t^{-1}-1.\]
By the observations of Remark~\ref{rem:LspaceTi} and equation \eqref{eq:calculateg}, we see that the stable coefficients given by $K$ must be $(2,2,2,4)$. Thus the quantity $N$ in Theorem~\ref{thm:widthbound} is given by $N=30$. Combining this with Proposition~\ref{prop:nearby}, we see that to verify that $K$ has no alternating surgeries we need only check that none of the
three changemaker lattices
\[L_{29}=\langle 4e_0+2e_1+2e_2+2e_3+e_4\rangle^\bot,\]
\[L_{30}=\langle 4e_0+2e_1+2e_2+2e_3+e_4+e_5\rangle^\bot,\]
or
\[L_{31}=\langle 4e_0+2e_1+2e_2+2e_3+e_4+e_5+e_6\rangle^\bot\]
admit obtuse superbases. Since this can be verified relatively easily, for example by using that in each case there are only a small number of irreducible vectors $v$ with $v\cdot e_0 \ne 0$, we see that $K$ does not admit any alternating surgeries.
\end{exam}
\begin{exam}\label{ex:23cable}
Let $K$ be the $(2,3)$-cable of $T_{2,3}$. We will show that this is an $L$-space knot not admitting any alternating surgeries. The Alexander polynomial of $K$ is
\[\Delta_K(t)=t^3 -t^2+1-t^{-2}+t^{-3}.\]
Observe that this is the same as the Alexander polynomial for the torus knot $T_{3,4}$. If $S^3_{r}(K)=\Sigma(L)$ were an alternating surgery, then for any reduced alternating diagram $D$ of $L$, the white lattice $\Lambda_D$ is isomorphic to an $r$-changemaker lattice with stable coefficients given by $(3)$. It follows that we must have $11\leq r \leq 13$. Since $S^3_{r}(T_{4,3})$ is an alternating surgery for any $r$ in this range, we must have $\Lambda_D \cong \Lambda_{D'}$, where $D'$ and is any reduced alternating diagram for an alternating knot or link $L'$ such that $\Sigma(L')=S^3_{r}(T_{4,3})$. Since $L$ and $L'$ are alternating, this isomorphism of white lattices implies that $L$ and $L'$ must be mutants of one another and that $\Sigma(L)=\Sigma(L')=S^3_{r}(T_{4,3})$ \cite{Greene13Mutation}. Surgery on a torus knot is always a small Seifert fibred space \cite{Moser71elementary}, but $S^3_{r}(K)$ is a small Seifert fibred space only if $r$ takes the form $r=6\pm\frac{1}{q}$ \cite{Gordon83Satellite}. Thus $K$ admits no alternating surgeries.
\end{exam}

\subsection{Surgeries bounding sharp 4-manifold}
It seems natural to wonder what we can say about the set of positive surgery slopes for which a given knot bounds a negative-definite sharp manifold. It can be shown that if it is non-empty then this set is an unbounded interval.
\begin{thm}[\cite{mccoy2014alexpoly}, Theorem~1.2]\label{thm:sharpupwards}
Let $K$ be a knot in $S^3$. If $S_{p/q}^3(K)$ bounds a sharp negative-definite 4-manifold for some $p/q>0$, then $S_{p'/q'}^3(K)$ bounds a sharp negative-definite 4-manifold for all $p'/q'\geq p/q$.\qed
\end{thm}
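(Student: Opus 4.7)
The plan is to combine the changemaker framework of Theorem~\ref{thm:Gibbonssouped} with a handle-theoretic construction that carries a sharp $4$-manifold upward along the rational slope scale. By Theorem~\ref{thm:Gibbonssouped}, the hypothesis furnishes a $p/q$-changemaker lattice $L$ with $Q_X\cong -L$ whose stable coefficients $(\sigma_m,\dotsc,\sigma_t)$ are determined by $K$. The argument breaks into a lattice extension step, a topological realization step, and a sharpness verification.

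For the lattice step, I would show that for every $p'/q'\geq p/q$ there exists a $p'/q'$-changemaker lattice $L'$ with the same stable coefficients as $L$. Writing $p'/q'=[a_0',\dotsc,a_{l'}']^-$, this is achieved by extending $w_0$ with additional basis vectors of coefficient $0$ or $1$ until $\norm{w_0'}=a_0'$, and then appending the chain vectors $w_1',\dotsc,w_{l'}'$ dictated by the continued fraction. The changemaker condition is preserved under prepending $1$s and $0$s to the coefficient sequence, so such an extension always exists once $a_0'\geq a_0$, and the inequality between ceilings is immediate from $p'/q'\geq p/q$.

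For the topological step, I would proceed by induction along elementary moves on the surgery slope. One such move increases the ceiling $a_0$ by one (accomplished by attaching a suitably framed $2$-handle to $X$ along a curve in $\partial X$ whose effect on the surgery description is a single Rolfsen twist), and another prolongs the continued fraction by one entry (accomplished by plumbing on an additional sphere along a meridian in the surgery diagram). Each move produces a new negative-definite $4$-manifold by attaching a single $2$-handle, so its intersection form is obtained from $Q_X$ by adjoining a rank-one piece; one checks that the resulting form is exactly $-L'$, with $L'$ the extended changemaker lattice of the previous paragraph.

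The main obstacle is verifying that the new $4$-manifold $X'$ is sharp. Via \eqref{eq:CMformula} and the Ni--Wu formula \eqref{eqn:NiWuVonly}, sharpness of $X'$ is equivalent to the identity
\[
8V_{|i|}=\min_{\substack{c\cdot w_0'\equiv a_0'+2i\bmod 2a_0'\\ c\in\Char(\mathbb{Z}^{t'+1})}}\norm{c}-t'-1
\]
for $|i|\leq a_0'/2$. Since the $V_i$ are knot invariants independent of slope, and the analogous identity at slope $p/q$ is guaranteed by the sharpness of $X$, the crux is to transport the minimisers at slope $p/q$ to minimisers at slope $p'/q'$. Here Proposition~\ref{prop:CMprop} is decisive: the changemaker property allows any required residue adjustment to be absorbed by the newly appended coordinates without raising the characteristic norm, while the chain relations among the $w_k'$ contribute no new norm cost at a minimiser. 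Iterating this transport across each elementary move establishes sharpness of $X'$ and proves the theorem.
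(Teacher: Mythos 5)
This theorem is cited from \cite{mccoy2014alexpoly} (Theorem~1.2 there) rather than proved in the present paper, so there is no in-paper proof to compare against. Evaluating your argument on its own terms, the central step contains a genuine gap. You assert that ``sharpness of $X'$ is equivalent to the identity \eqref{eq:CMformula}''. Only one direction of this is available: Gibbons' Theorem~\ref{thm:Gibbons} shows that \emph{if} $X'$ is sharp, \emph{then} $-Q_{X'}$ is a changemaker lattice whose vector $w_0'$ satisfies \eqref{eq:CMformula}. What your argument requires is the converse: given a negative-definite $4$-manifold $X'$ bounding $S^3_{p'/q'}(K)$ whose intersection form is a $p'/q'$-changemaker lattice with $w_0'$ satisfying \eqref{eq:CMformula}, conclude that $X'$ is sharp. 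This is a nontrivial claim and is not stated or proved in the paper. Sharpness requires that for \emph{every} $\spinct\in\spinc(\partial X')$ some characteristic covector of $Q_{X'}$ attains the $d$-invariant bound; \eqref{eq:CMformula} is a statement only about $w_0'$ inside $\mathbb{Z}^{t'+1}$ and does not by itself account for the ``lens space'' contribution coming from the chain vectors $w_1',\dotsc,w_{l'}'$, nor does it pin down how $\spinc(X')$ restricts to $\spinc(\partial X')$. Without a proof of this converse (or a direct $d$-invariant comparison along the cobordism you attach), the sharpness of $X'$ does not follow.

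There is also imprecision in the topological step that would need tightening before the argument is credible. A Rolfsen twist is a move that preserves the $3$-manifold but is not a $2$-handle attachment (it changes the Kirby diagram, not the $4$-manifold), so ``whose effect on the surgery description is a single Rolfsen twist'' does not describe a cobordism. What you presumably intend is to attach a $2$-handle along a meridian of the dual knot $K^*$ in $\partial X$; to raise the slope from $n$ to $n+1$ while remaining negative-definite the framing must be $-1$, and getting the sign right is exactly what keeps $X'$ negative-definite. Similarly ``plumbing on an additional sphere along a meridian'' needs to be specified carefully, because prolonging the continued fraction $[a_0,\dotsc,a_l]^-\to[a_0,\dotsc,a_l,2]^-$ actually \emph{decreases} the slope, so one must check that the sequence of elementary moves you perform actually reaches every $p'/q'\geq p/q$ from $p/q$ and that each move produces a negative-definite $2$-handle cobordism. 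Finally, note that what you need to prove is sharpness of a specific $4$-manifold, and the changemaker framework (Theorem~\ref{thm:Gibbonssouped}) constrains intersection forms but does not by itself produce manifolds or certify sharpness. A more direct route would be to verify sharpness of $X'=X\cup W$ via the Ni--Wu formula \eqref{eqn:NiWuVonly} by comparing $d$-invariants across the cobordism $W$ together with the known sharpness of the corresponding plumbing for the lens space $S^3_{p'/q'}(U)$, rather than routing through the changemaker formula.
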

This allows us to characterise the set of all such slopes for torus knots admitting positive $L$-space surgeries.
\begin{prop}
For $r,s>1$ and $p/q>0$, the manifold $S^3_{p/q}(T_{r,s})$ bounds a negative-definite sharp 4-manifold if and only if $p/q\geq rs-1$.
\end{prop}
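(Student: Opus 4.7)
The plan is to establish both directions by showing that the invariant $N$ attached to $T_{r,s}$ by Theorem~\ref{thm:widthbound} equals $rs$. For the realizability (``if'') direction, I would invoke Moser's classification of surgeries on torus knots: $S^3_{rs-1}(T_{r,s})$ is a lens space, and lens spaces bound their standard negative-definite Hirzebruch--Jung plumbings, which are sharp via Ozsv\'{a}th--Szab\'{o}'s $d$-invariant computations for lens spaces. Theorem~\ref{thm:sharpupwards} then immediately extends sharp bounding to all $p/q \geq rs - 1$.

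For the lower-bound (``only if'') direction, by Remark~\ref{rem:sharplowerbound} it suffices to show $N = rs$, since then $p/q \geq N - 1 = rs - 1$ is automatic. To compute $N$ I would work with the reducible slope $rs$: by Moser, $S^3_{rs}(T_{r,s}) \cong L(r, s) \# L(s, r)$, and the boundary connect sum of the two standard sharp lens-space plumbings bounds this connect sum. Sharpness is preserved under boundary connect sum because the $d$-invariants add across connect sum and the characteristic-squared-plus-$b_2$ inequality splits additively across summands, so we obtain a sharp negative-definite 4-manifold $X$ whose intersection form $Q_{X_1} \oplus Q_{X_2}$ is decomposable. Applying Theorem~\ref{thm:Gibbonssouped}, $-Q_X$ embeds as the $rs$-changemaker lattice of $T_{r,s}$, and the embedding inherits decomposability, so Lemma~\ref{lem:reduciblebound} forces $\sigma_m = m - 1$, where $m$ is the least index with $\sigma_m > 1$. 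Combined with $\sigma_i = 1$ for $1 \leq i < m$ and $\norm{w_0} = rs$, this yields
\[ N = \sigma_m + \sum_{i=m}^t \sigma_i^2 = (m-1) + \bigl(rs - (m-1)\bigr) = rs. \]

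The main obstacle is the structural package in the second paragraph: verifying that the boundary connect sum of the two Hirzebruch--Jung plumbings is genuinely sharp, and that decomposability of its intersection form transfers through the changemaker embedding provided by Theorem~\ref{thm:Gibbonssouped}. Once those two facts are in place, the arithmetic collapses cleanly because the contribution $\sigma_m = m - 1$ precisely cancels the $(m - 1)$ coming from the stable-versus-total coefficient-square sum, leaving $N = rs$.
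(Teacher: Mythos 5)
Your proof is correct, and the ``if'' direction is identical to the paper's: Moser's classification gives the lens space $S^3_{rs-1}(T_{r,s})$, Ozsv\'{a}th--Szab\'{o} give sharp plumbings for lens spaces, and Theorem~\ref{thm:sharpupwards} pushes this upward.

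For the ``only if'' direction you take a genuinely different route. The paper observes that $S^3_{rs+1}(T_{r,s})$ is \emph{also} a lens space, hence also an alternating surgery, so the width bound of Theorem~\ref{thm:widthbound} forces both $rs-1$ and $rs+1$ into $[N-1, N+1]$, which squeezes $N = rs$ in one line; Remark~\ref{rem:sharplowerbound} then finishes. You instead work at the reducible slope $rs$, produce a sharp negative-definite filling of $L(r,s)\# L(s,r)$ by boundary connect sum, feed its decomposable intersection form into Theorem~\ref{thm:Gibbonssouped}, and extract $\sigma_m = m-1$ from Lemma~\ref{lem:reduciblebound}; your arithmetic $N = (m-1) + (rs - (m-1)) = rs$ is correct. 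Your route works, and it has the minor virtue of illustrating how Lemma~\ref{lem:reduciblebound} interacts with reducible surgeries, but it carries extra overhead the paper avoids: you need to verify that a boundary connect sum of sharp manifolds is sharp (this is true --- $d$-invariants and $c_1^2 + b_2$ are both additive --- but it is not stated in the paper and you are right to flag it as the main thing to check), and you need to observe that the Hirzebruch--Jung plumbing lattices have no unit vectors so that all $\sigma_i \geq 1$, as required by Lemma~\ref{lem:reduciblebound}. The paper's two-lens-space squeeze bypasses both of these entirely.
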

\begin{proof}
Since $S^3_{rs-1}(T_{r,s})$ is a lens space \cite{Moser71elementary}, Theorem~\ref{thm:sharpupwards} shows that $S^3_{p/q}(T_{r,s})$ bounds a negative-definite sharp 4-manifold for any $p/q\geq rs-1$. To obtain the converse, observe that $S^3_{rs+1}(T_{r,s})$ is also a lens space and hence also an alternating surgery. Thus for $K=T_{r,s}$, we see that the integer $N$ in Theorem~\ref{thm:widthbound} is $N=rs$. Thus Remark~\ref{rem:sharplowerbound} gives the desired lower bound.
\end{proof}
There are also examples of $L$-space knots for which no such slopes exist.
\begin{exam}\label{ex:25cable}
Let $K$ be the $(2,5)$-cable of $T_{2,3}$. We will show that $K$ is an $L$-space knot such that $S^3_r(K)$ cannot bound a sharp negative-definite 4-manifold for any $r>0$. Since $S^3_{10}(K)= S^3_{5/2}(T_{3,2})\# L(2,1)$ is an $L$-space, $K$ is an $L$-space knot.
To show that $S^3_r(K)$ cannot bound a sharp 4-manifold, we show there is no vector satisfying \eqref{eq:CMformula}.
 The Alexander polynomial of $K$ is
\[\Delta_K(t)=t^4 -t^3+1-t^{-3}+t^{-4},\]
which has non-zero torsion coefficients $t_0(K)=t_1(K)=t_2(K)=t_3(K)=1$. Thus by Remark~\ref{rem:LspaceTi}, we can assume that the first coordinate of any vector satisfying \eqref{eq:CMformula} is $\sigma_0=4$. However this contradicts \eqref{eq:calculateg}, which implies that we must have $\sigma_0(\sigma_0-1)\leq 2g(K)=8$.
\end{exam}

\subsection{Further questions}
Given the results of this paper, it is natural to wonder how the set of knots admitting alternating surgeries are contained within the set of all $L$-space knots. For the purposes of this discussion we define several classes of $L$-space knots. We will restrict our attention to those admitting positive $L$-space surgeries. We say that $S^3_r(K)$ is a {\em quasi-alternating surgery} if it is the double branched cover of a quasi-alternating knot or link.
\begin{align*}
\mathcal{L}&=\{K:\exists r>0 \text{ such that } S^3_r(K)\text{ is an $L$-space.}\}\\
\mathcal{A}&=\{K:\exists r>0 \text{ such that } S^3_r(K) \text{ is an alternating surgery.}\}\\
\mathcal{D}&=\{K: \text{$K$ is branched double-cover of an unknotting arc in an} \\
            & \hspace{70pt} \text{alternating diagram.}\}\\
\mathcal{QA}&=\{K: \exists  r>0 \text{ such that } S^3_r(K) \text{ is a quasi-alternating surgery.}\}
\end{align*}
Since the double branched cover of a quasi-alternating knot is an $L$-space and any alternating link is quasi-alternating, these sets satisfy the following inclusions:
\[\mathcal{D}\subseteq\mathcal{A}\subseteq\mathcal{QA}\subseteq\mathcal{L}.\]
Watson has shown that any sufficiently large cable of a torus knot is in $\mathcal{QA}$ \cite{watson11quasi}. In particular, the $(2,15)$-cable of $T(2,3)$ is in $\mathcal{QA}$. As we have shown that it is not in $\mathcal{A}$, this shows that $\mathcal{A} \subsetneq \mathcal{QA}$.
\begin{rem} It seems probable that there are $L$-space knots which do not admit quasi-alternating surgeries. The $(2,3)$-cable of $T_{2,3}$ and the $(2,5)$-cable of $T_{2,3}$ seem to be potential candidates for knots in $\mathcal{L} \setminus \mathcal{QA}$.
\end{rem}
As far as the author is aware, all known examples of knots in $\mathcal{A}$ are also in $\mathcal{D}$, i.e they arise through the construction in Section~\ref{sec:Montytrick}. Moreover, it is known for every non-integer alternating surgeries, there is a knot in $\mathcal{D}$ with the same surgery.
\begin{thm}[\cite{mccoy2014noninteger}, Theorem~1.2]\label{thm:nonintegersurgery}
If $S_{p/q}^3(K)$ is an alternating surgery for with $q>1$, then there is $K'\in \mathcal{D}$ with $S_{p/q}^3(K)=S_{p/q}^3(K')$.\qed
\end{thm}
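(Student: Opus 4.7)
The plan is to combine the changemaker lattice structure forced by an alternating surgery with the Montesinos trick construction of Section~\ref{sec:Montytrick}. Let $L$ be an alternating link with $\Sigma(L) = S^3_{p/q}(K)$ and let $D$ be a reduced alternating diagram of $L$. By Theorem~\ref{thm:altboundssharp} and Theorem~\ref{thm:Gibbonssouped}, the white lattice $\Lambda_D$ is isomorphic to a $p/q$-changemaker lattice
\[\Lambda = \langle w_0, w_1, \ldots, w_l\rangle^\perp \subseteq \mathbb{Z}^{t+s+1},\]
and since $p/q \notin \mathbb{Z}$ we have $q \geq 2$, so $l \geq 1$. Because $\Gamma_D$ is $2$-connected, $\Lambda$ admits an obtuse superbase by Proposition~\ref{prop:graphsuperbase} and Lemma~\ref{lem:2connectgraphlat}.

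The pivotal step would be to exploit the intermediate half-integer changemaker lattice $L_{n-\frac{1}{2}}$ used in the proof of Proposition~\ref{prop:nearby}. That argument produces an obtuse superbase of $L_{n-\frac{1}{2}}$ containing two adjacent vertices $v = -f_1 + e_0 + e_1$ and $w$ (with $w\cdot f_1 = w\cdot e_0 = w\cdot e_1 = -1$ and $v\cdot w = -1$); the pair $\{v,w\}$ is the lattice-theoretic fingerprint of an unknotting arc, since in the planar graph picture the unique edge between these two vertices will correspond to the crossing that must be changed. I would first promote this to an obtuse superbase of $\Lambda$ itself that is compatible with this distinguished adjacent pair.

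Next I would translate the lattice data back to a diagram. The obtuse superbase determines the underlying graph $\Gamma_{D}$ up to isomorphism by Proposition~\ref{prop:graphsuperbase}, but not its planar embedding. Using Greene's theorem \cite{Greene13Mutation} that alternating links with isomorphic Goeritz forms are mutants of one another (and mutation preserves the branched double cover), I would produce an alternating diagram $D'$ of an alternating link $L'$ with $\Sigma(L') = \Sigma(L)$ in which the distinguished pair $\{v,w\}$ is realised as two white regions meeting at a single crossing $c$. Changing $c$ in $D'$ then yields an almost-alternating diagram $D''$, and reading off the continued-fraction data encoded by $w_1, \ldots, w_l$ forces $D''$ to be a diagram of the unknot. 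Letting $K'$ denote the core of the solid torus that lifts a small ball around $c$ in $\Sigma(D'') = S^3$, the Montesinos trick produces $K' \in \mathcal{D}$, and matching the rational tangle that recovers $D$ from $D''$ against the continued-fraction data of $[a_0, \ldots, a_l]^-$ gives $S^3_{p/q}(K') = \Sigma(D') = \Sigma(L) = S^3_{p/q}(K)$.

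The main obstacle is the translation step in the third paragraph: one must argue that the abstract obtuse superbase data really arises from an $S^2$-embedded planar graph whose distinguished adjacent pair $\{v,w\}$ corresponds to a single crossing in some alternating diagram, and that the specific rational tangle replacement dictated by $w_1, \ldots, w_l$ evaluates to $p/q$. A careful case analysis, treating the small stable-coefficient cases of Lemma~\ref{lem:V0=1} and Lemma~\ref{lem:mu=3} separately and applying the graph-lattice tools of Section~\ref{sec:graphlattices} (particularly Lemma~\ref{lem:cutedge} and Lemma~\ref{lem:superbasemod}) to manipulate superbases, should suffice.
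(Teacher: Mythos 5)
The statement you are trying to prove is not proved in this paper at all: it is imported verbatim (with citation) as Theorem~1.2 of \cite{mccoy2014noninteger}, so there is no in-paper argument to compare against. That said, your sketch does head in roughly the direction one would expect the cited paper to take (changemaker lattice, obtuse superbase, Montesinos trick), so I will comment on its internal soundness.

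The proposal is a strategy outline rather than a proof, and the crucial step is exactly the one you flag and then wave past. You write that ``reading off the continued-fraction data encoded by $w_1,\ldots,w_l$ forces $D''$ to be a diagram of the unknot.'' Nothing in the changemaker structure, the obtuse superbase, or Greene's mutation theorem directly produces this. What the lattice-theoretic data gives you is a 2-connected planar graph together with a distinguished edge (or pair of adjacent vertices); for the Montesinos trick to manufacture a knot $K'\subseteq S^3$ with $\Sigma(D')=S^3_{p/q}(K')$, you must show that resolving the rational tangle to a trivial tangle produces a diagram whose branched double cover is genuinely $S^3$, i.e.\ a diagram of the unknot. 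Establishing that requires a separate argument identifying the double cover (for instance by showing the resulting Goeritz lattice is unimodular of the right rank and then quoting a classification, or by direct manipulation of the surgery description); your sketch supplies none of this. A second, smaller gap: Proposition~\ref{prop:nearby} is stated for $L_{n-1/2}$ starting from $L_{p/q}$ with $q>r\geq 1$, and the extraction of the distinguished adjacent pair $\{v,w\}$ there comes with the hypothesis $\sigma_t>1$. You need to verify this hypothesis in your setting (it is forced once $K$ is nontrivial via \eqref{eq:calculateg}, but the unknot case has to be dispatched separately) and you need to explain why the pair $\{v,w\}$ survives the mutation you perform using \cite{Greene13Mutation}; mutation preserves the abstract Goeritz lattice but can in principle scramble which pair of white regions shares a crossing. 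Finally, the closing claim that ``matching the rational tangle \dots gives $S^3_{p/q}(K')=\Sigma(D')$'' needs the exact correspondence between the Hirzebruch--Jung data $[a_0,\ldots,a_l]^-$ of the changemaker lattice and the rational tangle replacement, which is again asserted but not derived. In short, the scaffolding is plausible but the load-bearing steps are all missing.
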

This suggests the following conjecture.
\begin{conj}\label{conj:wild}
Every alternating surgery arises as tangle replacement on an almost-alternating diagram of the unknot, that is, we have $\mathcal{A}=\mathcal{D}$.
\end{conj}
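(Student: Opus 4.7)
The plan is to reduce the conjecture to the integer-slope case using Theorem~\ref{thm:nonintegersurgery}, then attack integer surgeries through the changemaker lattice structure. If $K \in \mathcal{A}$ admits a non-integer alternating surgery $S^3_{p/q}(K)$, Theorem~\ref{thm:nonintegersurgery} supplies a knot $K' \in \mathcal{D}$ with $S^3_{p/q}(K) = S^3_{p/q}(K')$; the task becomes promoting this to $K = K'$. Since $K$ is automatically an $L$-space knot (any positive alternating surgery yields an $L$-space by Theorem~\ref{thm:altboundssharp}), Theorem~\ref{thm:Gibbonssouped} tells us that $K$ and $K'$ have the same stable coefficients, and so the same Alexander polynomial and the same knot Floer complex. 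I would then invoke a rigidity statement asserting that two $L$-space knots sharing the same knot Floer complex and a common non-integer positive surgery slope must be isotopic; this is plausible because the knot Floer complex determines the surgery manifold as a graded \spinc{} 3-manifold, and non-integer surgeries are expected to be knot-determining for $L$-space knots in much the same way as small integer slopes.

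The harder case is an integer alternating surgery $S^3_N(K)$ where $K$ admits no non-integer alternating surgery. By Theorem~\ref{thm:widthbound} the alternating slopes lie in $\{N-1,N,N+1\}$, and Theorem~\ref{thm:fullrange} together with Remark~\ref{rem:fullrangeextension} forces $K$ to be a cable knot whenever more than one integer slope occurs; for cable knots one should in principle be able to exhibit the almost-alternating diagram of the unknot directly from the cabling data. So we may assume $K$ admits a unique integer alternating surgery $S^3_N(K) = \Sigma(L)$ with $L$ alternating. Fix a reduced alternating diagram $D$ of $L$; its white lattice $\Lambda_D$ is isomorphic to the $N$-changemaker lattice $\Lambda_N = \langle w_0 \rangle^\bot$. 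My strategy is to use the obtuse superbase of $\Lambda_N$ coming from the graph-lattice identification, together with Proposition~\ref{prop:nearby} and Lemma~\ref{lem:superbasemod}, to locate a distinguished vertex whose associated crossing $c$ in $D$ is a candidate dealternating crossing. I would then try to show that changing $c$ yields an alternating diagram of the unknot with $K$ realised as the lift of the arc through $c$; this would place $K$ in $\mathcal{D}$.

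The principal obstacle is this last step: recovering the ambient knot $K$ from the intrinsic lattice data of $\Lambda_N$. Even if the combinatorics produces a candidate unknotting crossing, one must verify that its branched double cover yields $K$ itself rather than some other $L$-space knot sharing the same Alexander polynomial and $N$-surgery; more seriously, it is not clear a priori why an alternating diagram with prescribed white lattice must contain an unknotting crossing at all. I suspect this step requires substantial new input beyond what is developed here, perhaps via immersed-curve or bordered Floer techniques, or else a genuinely clever direct construction; it is also plausible that the conjecture fails for some exotic $L$-space knot, which would explain its status as an open problem rather than a theorem.
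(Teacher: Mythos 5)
The statement you are asked to prove is Conjecture~\ref{conj:wild}, which the paper explicitly presents as an \emph{open conjecture}, not a theorem. There is no proof in the paper to compare your proposal against; the surrounding discussion in Section~5.4 is precisely the author setting up evidence for the conjecture (Theorem~\ref{thm:nonintegersurgery}, the cyclic surgery theorem for torus knots) and posing subsidiary open questions. So your proposal cannot be judged as agreeing or disagreeing with a paper-internal argument: there isn't one.

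As a standalone attempt, your proposal is an honest sketch of a plausible attack but does not constitute a proof, and to your credit you flag this yourself. The main gaps, made concrete: (i) In the non-integer case, you invoke a ``rigidity statement'' that two $L$-space knots with the same knot Floer complex and a common non-integer positive surgery slope must be isotopic. This is not a known theorem, and it does not follow from anything cited. Theorem~\ref{thm:nonintegersurgery} provides only that \emph{some} $K' \in \mathcal{D}$ has the same surgered 3-manifold; upgrading this to $K = K'$ is precisely the hard part, since a 3-manifold does not determine the surgered knot in general, and knot Floer homology does not determine the knot even within the class of $L$-space knots. (ii) The claim that cable knots with alternating surgeries ``should in principle'' lie in $\mathcal{D}$ is unargued. (iii) In the unique-integer-slope case, the strategy of locating a dealternating crossing from an obtuse superbase and then recovering the ambient knot $K$ from the lattice $\Lambda_N$ is, as you note, entirely speculative; the lattice is an invariant of the surgered 3-manifold and its sharp filling, not of the pair $(S^3, K)$, so there is no mechanism in the paper's toolkit for reconstructing $K$ from it. None of this is a criticism of your taste — the reduction via Theorem~\ref{thm:nonintegersurgery}, Theorem~\ref{thm:widthbound}, and Theorem~\ref{thm:fullrange} is a sensible way to organize the problem — but the conjecture remains open precisely because these gaps are the substance of the difficulty.
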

Since lens spaces arise as the double branched covers of alternating links, one can ask how this conjecture agrees with results and conjectures on lens space surgeries. The cyclic surgery theorem of Culler, Gordon, Luecke and Shalen shows that only torus knots admit non-integer lens space surgeries \cite{cglscyclic}. Since torus knots are in $\mathcal{D}$, this verifies Conjecture~\ref{conj:wild} in certain cases.

Short of attacking Conjecture~\ref{conj:wild} in full, there are various related questions we can ask.
\begin{ques}
Does Theorem~\ref{thm:nonintegersurgery} extend to the case of integer alternating surgeries?
\end{ques}
It follows from their construction that every knot in $\mathcal{D}$ admits a strong inversion.
\begin{ques}
Is every knot in $\mathcal{A}$ strongly invertible?
\end{ques}
It seems likely that any progress on Conjecture~\ref{conj:wild} would require an alternative description of the class $\mathcal{D}$.
\begin{ques}
Is there a characterisation of $\mathcal{D}$ which does not refer to almost-alternating diagrams of the unknot?
\end{ques}
\paragraph{} Finally, as we demonstrated with the $(2,5)$-cable of $T_{2,3}$, equation \eqref{eq:CMformula} can be used to show that for some knots no manifold obtained by positive surgery can bound a negative-definite sharp manifold. As we saw in Example~\ref{ex:23cable}, the $(2,3)$-cable of $T_{2,3}$ passes this obstruction as it has the same Alexander polynomial as $T_{3,4}$. However, it seems unlikely that any positive surgery on the $(2,3)$-cable of $T_{2,3}$ bounds a sharp manifold.
\begin{ques}
Can one find alternative ways to show that surgery on a knot does not bound a sharp 4-manifold? In particular, is it possible to show that no positive surgery on the $(2,3)$-cable of $T_{2,3}$ bounds a sharp manifold?
\end{ques}
\bibliographystyle{plain}
\bibliography{alternatingsurgeries}
\end{document}